\newtheorem{thm}{Theorem}
\newtheorem{lemma}{Lemma}
\newtheorem{defn}{Definition}
\newtheorem{remark}{Remark}
\begin{document}

\title[Braid equivalence in 3-manifolds with rational surgery]
  {Braid equivalence in 3-manifolds with rational surgery description}
	
\author{Ioannis Diamantis}
\address{Department of Mathematics,
National Technical University of Athens,
Zografou Campus,
GR-15780 Athens, Greece.}
\email{diamantis@math.ntua.gr}

\author{Sofia Lambropoulou}
\address{Department of Mathematics,
National Technical University of Athens,
Zografou Campus,
GR-15780 Athens, Greece.}
\email{sofia@math.ntua.gr}
\urladdr{http://www.math.ntua.gr/$\tilde{~}$sofia}

\keywords{rational surgery, links in $3$-manifolds, mixed links, band moves, mixed braids, $L$-moves, Markov moves, braid band moves, parting, combing, cabling, lens spaces, Seifert manifolds, homology spheres. }

\subjclass[2010]{57M27, 57M25, 57N10}

\thanks{This research  has been co-financed by the European Union (European Social Fund - ESF) and Greek national funds through the Operational Program ``Education and Lifelong Learning" of the National Strategic Reference Framework (NSRF) - Research Funding Program: THALES: Reinforcement of the interdisciplinary and/or inter-institutional research and innovation. }

\setcounter{section}{-1}

\date{}

\begin{abstract}
In this paper we describe braid equivalence for knots and links in a 3-manifold $M$ obtained by rational surgery along a framed link in $S^3$. We first prove a sharpened version of the Reidemeister theorem for links in $M$. We then give geometric formulations of the braid equivalence via mixed braids in $S^3$ using the  $L$-moves and the braid band moves.
 We finally give algebraic formulations in terms of  the mixed braid groups $B_{m,n}$ using cabling and the techniques of parting and combing for mixed braids. We also provide concrete formuli of the braid equivalence in the case where $M$ is a lens space, a Seifert manifold or a homology sphere obtained from the trefoil. The  algebraic classification of knots and links in a $3$-manifold via mixed braids is a useful tool for studying skein modules of $3$-manifolds.
\end{abstract}

\maketitle

\section{Introduction}

In the study of knots and links in $3$-dimensional spaces (such as handlebodies, knot complements, c.c.o. $3$-manifolds) it can prove very useful
to take an approach via braids, as the use of braids provides more structure and more control on the braid  equivalence moves.

\smallbreak
In \cite{LR1} geometric braid equivalence has been given for isotopic knots and links in knot complements and in c.c.o. $3$-manifolds with integral surgery description. This was done by fixing (pointwise) a description for the knot complement
or the $3$-manifold via a closed braid $\widehat{B}$ in $S^3$. Then, knots and links in such a $3$-manifold are represented unambiguously by {\it mixed
links} in $S^3$ (Figure~\ref{A mixed link}) and braids in the manifold are represented unambiguously by geometric {\it mixed
braids} in $S^3$.  A geometric mixed braid is a braid in $S^3$ that contains $B$ as a fixed subbraid (Figure~\ref{mixedbraidsLmoves}).
 Isotopy for links in a c.c.o. 3-manifold $M$ is then translated into appropriate moves for mixed links in $S^3$, which comprise isotopy in the complement
$S^3\backslash \widehat{B}$ together with the {\it band moves} coming from the handle sliding moves in $M$,  and are related to the surgery description of $M$ (Figure~\ref{Band Moves}).
For the braid equivalence, the authors sharpened first the classic Markov theorem giving only one type of moves, the {\it $L$-moves} (Figure~\ref{mixedbraidsLmoves}), which are geometric as well as algebraic.
Then, with the use of the $L$-moves and the {\it braid band moves} (Figure~\ref{pbbm}) they formulated geometric mixed braid equivalence for knots and links in knot complements and in c.c.o. $3$-manifolds
obtained from $S^3$ by integral surgery.

Further, in \cite{LR2} the same authors provided algebraic formulations for the geometric mixed  braid equivalences in knot complements and in c.c.o. $3$-manifolds, using the {\it mixed braid groups} $B_{m,n}$ (Eq.~\ref{B} and Figure~\ref{Loops and Crossings}), introduced and studied in \cite{La2}, and the techniques of  parting and combing mixed braids (Figure~\ref{partingandcombing}). {\it Parting} a geometric mixed braid means to separate its strands into two sets: the strands of the fixed subbraid $B$ and the moving strands of the braid representing a link in the $3$-manifold. {\it Combing} a parted mixed braid means to separate the braiding of the fixed subbraid $B$ from the braiding of the moving strands (Figure~\ref{combing}). The above techniques have been also applied in \cite{HL} for links in a handlebody.

\smallbreak
Integral surgery is a special case of rational surgery and, by a classic result of topology, every c.c.o. $3$-manifold can be obtained
by surgery (integral or rational) along a framed link in $S^3$. Moreover, in the case of integral surgery the components of the framed link may be all assumed to be simple closed curves, giving rise to a  closed pure braid.
 There are $3$-manifolds which have simpler description when obtained from $S^3$
by rational surgery. Representative examples are the lens spaces $L(p,q)$ which with rational surgery description $p/q$ are obtained
from the trivial knot, while with integral surgery description a non-trivial link is needed, see for example \cite{Ro} p.322. A simpler surgery description of a $3$-manifold $M$ is expected to induce simpler algebraic expressions for the braid equivalence in $M$. As an example, compare Section~4 in
\cite{LR2} with Section~5.1 in this paper for the case of lens spaces. In this paper the braid equivalence is in the mixed braid groups $B_{1,n}$ and there is only one expression for the braid band moves. In \cite{LR2} there are many, according to the surgery coefficient of each strand of the surgery pure braid and, on top of that, combing is also needed.

\smallbreak
The purpose of this paper is to provide mixed braid equivalence, geometric as well as algebraic, for isotopic oriented links in c.c.o. $3$-manifolds obtained by rational surgery along framed links in $S^3$. We follow the setting and the techniques of \cite{LR1,LR2} and we use the results therein. More precisely, let $M$ be a c.c.o. $3$-manifold obtained by rational surgery along a framed link $\widehat{B}$ in $S^3$. Note that the surgery braid $B$ may not be assumed to be a pure braid. Let $s$ be a surgery component  of $B$ with surgery description $p/q$ consisting of $k$ strands, $s_1, \ldots, s_k$. When a  {\it geometric braid band move} on $s$ occurs, $k$ sets of $q$ new strands appear, each one running in parallel to a strand of $s$, and also a $(p,q)$-torus braid wraps around the last strand, $s_k$, $p$ times (see Figure~\ref{homtref} for an example). These moves together with the $L$-moves lead to the {\it geometric mixed braid equivalence} in $M$ (Theorem~\ref{geommarkovrat}).  The geometric braid band moves are much more complicated than in the case of integral surgery \cite{LR1}. However, a sharpened version of the Reidemeister theorem for  links in $M$ (Theorem~\ref{reidemrational}; see also \cite{LR1, Sk, Su}), whereby only one type of band moves is used in the isotopy equivalence (Figure~\ref{bmoves}), makes the proof of Theorem~\ref{geommarkovrat} lighter.

In order to move toward algebraic statements we use the notion of a $q$-strand cable and we apply the techniques and results from \cite{LR2}. A {\it $q$-strand cable} encloses a set of  $q$ new strands  arising from the performance of a geometric braid band move. We show first that parting a $q$-strand cable is equivalent to parting each strand of the cable one by one; that is, parting and cabling commute (Figure~\ref{stndpartcable}). Treating now each one of the $k$ $q$-strand cables as one thickened strand leads to the {\it parted mixed braid equivalence} (Theorem~\ref{qparted}), assuming the corresponding result in \cite{LR2}. We continue by providing algebraic expressions for parted cables (Lemma~\ref{cableloop})  and also for the loopings between the strands of $B$ and the remaining strands of the mixed braid after a braid band move is performed (Figures~\ref{cablesa}, \ref{cablesa2}). Then, we part locally the $(p,q)$-torus braid and the crossing of a parted mixed braid band move  (Figures~\ref{cables2}, \ref{cablesc}), obtaining algebraic expressions for these parts.  From the above we obtain the algebraic expression of an {\it algebraic braid band move}. Finally, we do combing through $B$ and we show that combing a $q$-strand cable is equivalent to combing each strand of the cable one by one; that is, combing and cabling commute (Figures~\ref{combcable1} and \ref{combcableproof2}). So, assuming the corresponding result in \cite{LR2}, we obtain the {\it algebraic mixed braid equivalence} for links in $M$ in terms of the mixed braid groups $B_{m,n}$, which is our main result (Theorem~\ref{algmarkov}).

The paper is organized as follows. In Section~1 we recall the setting and the essential techniques and results from \cite{LR1,LR2}. In Section~2 we prove the sharpened version of the Reidemeister theorem for knots and links in c.c.o. $3$-manifolds with rational surgery description. In Section~3 we give the geometric braid equivalence for links in such $3$-manifolds (Theorem~\ref{geommarkovrat}) and in Section~4 we give the corresponding algebraic version (Theorem~\ref{algmarkov}). Finally, in Section~5 we give the concrete algebraic expressions for the mixed braid equivalences in lens spaces, in Seifert manifolds and in homology spheres obtained from $S^3$ by rational surgery along the trefoil knot. Also, we provide some figures  (Figures~\ref{seifcomb1}--\ref{seifcomb6}) illustrating in a generic example   all the techniques developed and used in this paper  step-by-step.

\smallbreak
The algebraic mixed braid equivalence gives good control over the band moves, so our results can be applied for the study of skein modules of c.c.o. $3$-manifolds obtained by rational surgery along a framed link in $S^3$ and for constructing invariants of links in such $3$-manifolds, using algebraic means.
 For a construction of the analogues of the HOMFLYPT polynomial for knots and links in the solid torus via this approach, the reader is referred to \cite{La3}.  In a sequel paper we shall use Theorem~\ref{algmarkov} and the results of \cite{La3} in order to compute the HOMFLYPT skein module of the lens spaces $L(p,q)$ with this method.

\section{Background results in the case of integral surgery description}

In this section we recall the topological settings and results from [LR1] and [LR2] for expressing link isotopy in a c.c.o. $3$-manifold $M$ in terms of geometric and algebraic mixed braid equivalence in $S^3$. For the rest of this section we fix a c.c.o. $3$-manifold $M$, which is obtained by integral surgery
on a framed link in $S^3$ given in the form of a closed braid $\widehat{B}$. We shall denote $M=\chi_{_{\mathbb{Z}}}(S^3, \widehat{B})$.

\subsection{Mixed links and isotopy}

Let $L$ be an oriented link in $M$. Fixing $\widehat{B}$
pointwise, $L$ can be represented unambiguously by a \textit{mixed link} in $S^3$ denoted $\widehat{B}\cup L$, that is, a link in $S^{3}$ consisting of the  \textit{fixed part} $\widehat{B}$ and the \textit{moving part} $L$ that links with $\widehat{B}$. A \textit{mixed link diagram }is a diagram $\widehat{B}\cup \widetilde{L}$ of $\widehat{B}\cup L$ on the plane of $\widehat{B}$, where this plane is equipped with the top-to-bottom direction of the braid $B$, see Figure \ref{A mixed link} for an example.

\begin{figure}
\begin{center}
\includegraphics[width=1.3in]{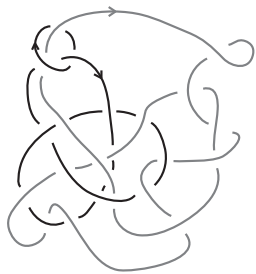}
\end{center}
\caption{ A mixed link in $S^{3}$. }
\label{A mixed link}
\end{figure}

An isotopy of $L$ in $M$ can be translated into a finite sequence of moves of the mixed link $\widehat{B} \bigcup L$ in $S^3$ as follows.
As we know, surgery along $\widehat{B}$ is realized by taking first the complement $S^3\backslash \widehat{B}$ and then attaching to it solid tori
according to the surgery description. Thus, isotopy in $M$ can be viewed as certain moves in $S^3$, namely, isotopy in $S^3\backslash \widehat{B}$ together with the band moves in $S^3$, which are similar to the second Kirby move.
Isotopy in $S^3\backslash \widehat{B}$ is realized by the classical Reidemeister moves and planar moves for the moving part together with
the {\it extended Reidemeister moves}. These are the Reidemeister II and III moves involving the fixed and the moving part of the mixed link
(cf. Definition~5.1 \cite{LR1}). A \textit{band move} is a non-isotopy move in $S^3 \backslash \widehat{B}$ that reflects isotopy in $M$ and is the band connected sum of a component, say $s$, of $L$ with the specified (from the framing) parallel curve $l$ of a surgery component, say $c$, of $\widehat{B}$. Note that $l$ bounds a disc in $M$.
 There are two types of band moves according to the orientations of the component $s$ of $L$ and of the surgery
curve $c$, as illustrated and exemplified in Figure~\ref{Band Moves}. In the $\alpha$-type the orientation of $s$ is opposite
to the orientation of $c$ (and of its parallel curve $l$), but after the performance of the move their orientations agree.
In the $\beta$-type the orientation of $s$ agrees initially with the orientation of $c$, but disagrees after the performance of the move.
Note that the two types of band moves are related by a twist of $s$ (Reidemeister I move in $S^3\backslash \widehat{B}$).

\begin{figure}
\begin{center}
\includegraphics[width=5.5in]{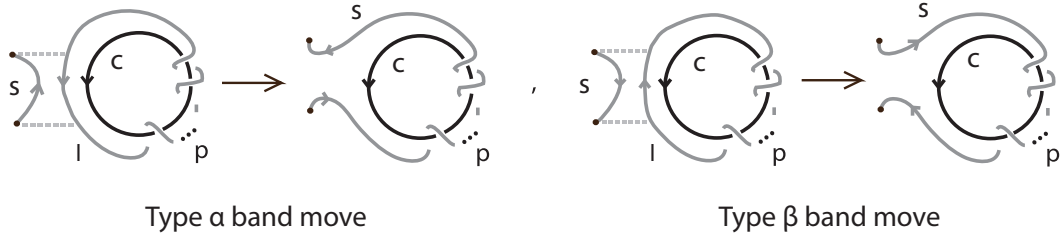}
\end{center}
\caption{ The two types of $\mathbb{Z}$-band moves. }
\label{Band Moves}
\end{figure}

The above are summarized in the following analogue of the Reidemeister theorem for oriented links in $M$.

\begin{thm}[Reidemeister for $M=\chi_{_{\mathbb{Z}}}(S^3, \widehat{B})$, Thm. 5.8 \cite{LR1}] \label{reidemintegral}
Two oriented links $L_1$, $L_2$ in  $M=\chi_{_{\mathbb{Z}}}(S^3, \widehat{B})$ are isotopic if and only if any two corresponding mixed
link diagrams of theirs, $\widehat{B} \cup \widetilde{L_1}$ and $\widehat{B} \cup \widetilde{L_2}$, differ by isotopy in $S^3 \backslash \widehat{B}$
together with a finite sequence of the two types $\alpha$ and $\beta$ of band moves.
\end{thm}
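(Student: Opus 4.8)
The plan is to prove the two implications separately; the ``if'' direction is routine and the ``only if'' direction carries the content. For the easy direction, suppose the mixed link diagrams $\widehat{B}\cup\widetilde{L_1}$ and $\widehat{B}\cup\widetilde{L_2}$ differ by isotopy in $S^3\setminus\widehat{B}$ together with band moves. Isotopy in $S^3\setminus\widehat{B}$ is in particular isotopy in $M$, since $S^3\setminus N(\widehat{B})$ sits inside $M$. For a band move, recall that the surgery glues a solid torus $V_i$ onto $\partial N(\widehat{B}_i)$ so that the meridian disc $D_i$ of $V_i$ is bounded by the framing curve $\gamma_i$; hence the parallel curve $l$ appearing in the move bounds $D_i$ in $M$, and forming the band connected sum of a component of $L$ with $l$ is the same as pushing an arc of that component across $D_i$, which is an isotopy in $M$. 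Thus $L_1$ and $L_2$ are isotopic in $M$.

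For the converse, assume $L_1$ and $L_2$ are isotopic in $M$. By the isotopy extension theorem I may take the isotopy to be ambient, $H\colon M\times[0,1]\to M$ with $H_0=\mathrm{id}$ and $H_1(L_1)=L_2$, and set $L_t=H_t(L_1)$. Write $M=\bigl(S^3\setminus N(\widehat{B})\bigr)\cup\bigl(\bigcup_i V_i\bigr)$, let $c_i$ be the core of the glued solid torus $V_i$, and put $c=\bigcup_i c_i$. The trace map $\Phi\colon L_1\times[0,1]\to M$, $\Phi(x,t)=H_t(x)$, has image of dimension $2$, while $\dim c=1$; since $2+1-3=0$, after a small perturbation supported away from the endpoints (where $L_t$ already lies in $S^3\setminus N(\widehat{B})$ and so misses $c$) I may assume $\Phi$ is transverse to $c$. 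Then $\Phi^{-1}(c)$ is finite, the associated critical times $0<t_1<\dots<t_N<1$ may be taken distinct, and at each $t_j$ exactly one component of $L_{t_j}$ crosses one core $c_{i(j)}$ once, transversally.

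On every complementary subinterval the isotopy is carried out inside $M\setminus c$. Since $V_i\setminus c_i$ is a collar $\partial V_i\times(0,1]$ glued to $\partial N(\widehat{B}_i)$, there is a homeomorphism $M\setminus c\to S^3\setminus\widehat{B}$ that is the identity on $S^3\setminus N(\widehat{B})$, hence fixes $\widehat{B}$ and $L_0=L_1$; under this identification the isotopy on such a subinterval becomes an isotopy in $S^3\setminus\widehat{B}$. Viewing the mixed link diagram as an ordinary link diagram in $S^3$, the classical Reidemeister theorem then realizes it on diagrams by planar isotopy, ordinary Reidemeister moves on $L$, and the extended Reidemeister moves between $L$ and $\widehat{B}$ — all of which are instances of ``isotopy in $S^3\setminus\widehat{B}$''. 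At a critical time, the passage of $L_t$ through $c_i$ is locally a finger move pushing an arc $a$ of $L_t$ across the meridian disc $D_i$ of $V_i$; since $\partial D_i=\gamma_i$ is the framing curve on $\partial N(\widehat{B}_i)$, this replaces $a$ by an arc that travels once along a framing-parallel copy $l$ of $\gamma_i$ and back, i.e.\ by a band connected sum of that component of $L$ with $l$ — a band move. The relative orientations of $a$ and $\gamma_i$ determine whether it is recorded as type $\alpha$ or type $\beta$, the two being interchangeable by a Reidemeister I twist of $a$ in $S^3\setminus\widehat{B}$ as already noted. Concatenating the contributions of all subintervals and all critical times produces the required sequence of moves.

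The step I expect to be the main obstacle is exactly this local analysis at the critical times: one must verify that a single transversal passage through a core contributes \emph{exactly one} band move and not a more complicated local tangle, while keeping careful track of the framing so that the banding curve really is the framing-parallel one; and conversely, that a band move performed at an arbitrary point of a surgery component, with an arbitrary over/under-crossing pattern against $\widehat{B}$ and $L$, can be brought to one of the two standard local forms of Figure~\ref{Band Moves} by isotopy in $S^3\setminus\widehat{B}$ alone. Equivalently, one needs that sliding the attaching band along the surgery component, and across strands of the diagram, is an isotopy in the complement, so that the only surviving invariant is the relative orientation and hence exactly the two types $\alpha$ and $\beta$ occur. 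Carrying out this case analysis, together with the general-position argument above, is the heart of the proof.
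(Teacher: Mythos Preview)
This statement is not proved in the present paper: it is Theorem~\ref{reidemintegral} in Section~1, which is the \emph{background} section, and it is quoted verbatim from \cite{LR1} (as ``Thm.~5.8 \cite{LR1}'') with no proof given here. So there is no argument in this paper to compare your proposal against.

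On its own merits, your outline is the standard one and is essentially how this result (and its variants in \cite{Sk,Su}) is established: make the trace of an ambient isotopy transverse to the cores $c=\bigcup_i c_i$ of the filled solid tori, use the identification $M\setminus c \cong S^3\setminus\widehat{B}$ on the subintervals where no core is crossed, and interpret each transversal crossing of a core as a push across the meridian disc $D_i$, whose boundary is glued to the framing curve and hence yields a band move. Your own assessment of where the work lies is accurate: the care is in the local model at a critical time (one crossing $\Rightarrow$ one band move along the correct framing-parallel curve) and in showing that the location and over/under pattern of the attaching band can be standardized by isotopy in $S^3\setminus\widehat{B}$, leaving only the orientation dichotomy $\alpha$/$\beta$. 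Nothing in your sketch is wrong; what remains is exactly the bookkeeping you flagged.
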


\subsection{Geometric mixed braids and the $L$-moves}

In order to translate isotopy of links in the $3$-manifold $M$ to braid equivalence, we need to introduce the notion of a geometric
mixed braid. A \textit{geometric mixed braid} related to $M=\chi_{_{\mathbb{Z}}}(S^3, \widehat{B})$ and to a link $K$ in $M$, is an element of the group $B_{m+n}$, where $m$ strands form the fixed
\textit{surgery braid} $B$ and $n$ strands form the {\it moving subbraid\/} $\beta$ representing the link $K$ in $M$. For an illustration see the middle picture of Figure~\ref{mixedbraidsLmoves}.
We further need the notions of the $L$-moves and the braid band moves.

\begin{defn} [$L$-moves and $\mathbb{Z}$-braid band moves, Definitions 2.1 and 5.6 \cite{LR1}] \rm

\noindent {\it(i)} Let $B \bigcup \beta$ be a geometric mixed braid in $S^3$ and $P$ a point of an arc of the moving subbraid $\beta$, such that $P$ is not vertically
aligned with any  crossing or endpoint of a braid strand. Doing an {\it $L$-move\/}  at $P$ means to cut the arc at $P$, to bend the two resulting
smaller arcs slightly apart  by a  small isotopy and to stretch them vertically, the upper downward and the lower upward, and both {\it over\/}
or {\it under\/} all other arcs of the diagram, so as to introduce two new corresponding moving strands with endpoints on the vertical line of the point $P$.
Stretching the new strands over  will give rise to an {\it $L_o$-move\/} and under to an {\it  $L_u$-move\/}. For an illustration see Figure~\ref{mixedbraidsLmoves}. Two geometric mixed braids shall be called {\it $L$-equivalent} if and only if they differ by a sequence of $L$-moves and braid isotopy.

\noindent {\it(ii)} A {\it geometric $\mathbb{Z}$-braid band move\/} is a move between geometric mixed braids which is a band
move between their closures. It starts with a little band oriented downward, which, before sliding along
a surgery strand, gets one twist {\it positive\/} or {\it negative\/} (see Figure~\ref{pbbm} (a) and (b)).
\end{defn}

\begin{figure}
\begin{center}
\includegraphics[width=5in]{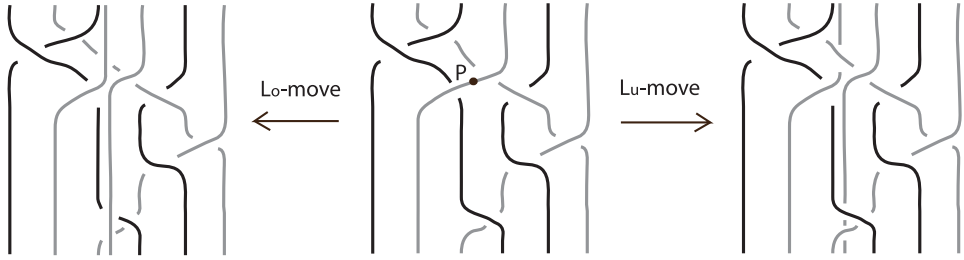}
\end{center}
\caption{A geometric mixed braid and the two types of $L$-moves. }
\label{mixedbraidsLmoves}
\end{figure}

\begin{figure}
\begin{center}
\includegraphics[width=5.4in]{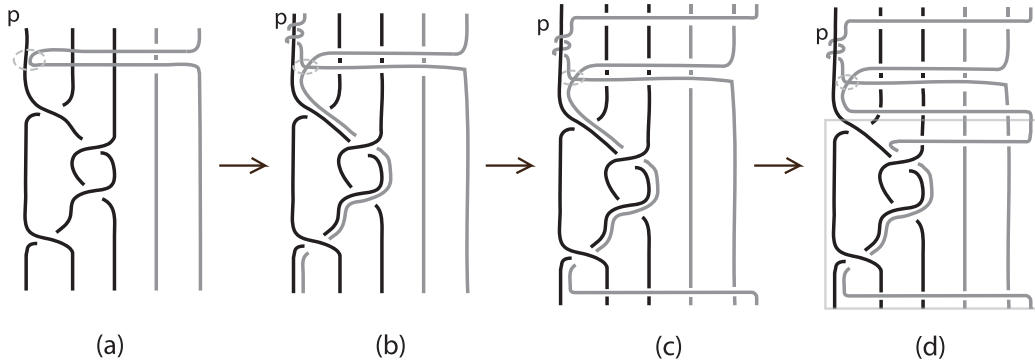}
\end{center}
\caption{  A geometric, a parted and an algebraic $\mathbb{Z}$-braid band move (top part of (d)). }
\label{pbbm}
\end{figure}

\begin{remark} \label{Lmove} \rm
{\it (i)} In \cite{LR1} it is shown that classical braid equivalence in $S^3$ is generated only by the $L$-moves. This implies that braid conjugation and in particular change of the order of the endpoints of a braid can be realized by $L$-moves. A demonstration can be found in \cite{LR2} Figure~14. \\
\noindent {\it (ii)} A geometric $\mathbb{Z}$-braid band move may be always assumed, up to $L$-equivalence, to take place at the top part of a mixed braid and on the right of the specific surgery strand (\cite{LR2} Lemma~5).
\end{remark}

In [LR1] the following theorem was proved for isotopic links in $M=\chi_{_{\mathbb{Z}}}(S^3, \widehat{B})$.

\begin{thm}[Geometric braid equivalence for $M=\chi_{_{\mathbb{Z}}}(S^3, \widehat{B})$, Theorem 5.10 \cite{LR1}] \label{geommarkov}
Two oriented links in $M=\chi_{_{\mathbb{Z}}}(S^3, \widehat{B})$ are isotopic if and only if any two corresponding geometric mixed braids in $S^3$ differ by
mixed braid isotopy, by $L$-moves that do not touch the fixed subbraid $B$ and by the geometric $\mathbb{Z}$-braid band moves.
\end{thm}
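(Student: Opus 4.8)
\emph{Plan.} I would prove both implications of the ``if and only if'', using Theorem~\ref{reidemintegral} as the bridge between link isotopy in $M$ and combinatorial moves on mixed link diagrams in $S^3$, and the $L$-move calculus as the bridge between mixed link diagrams and geometric mixed braids. The forward direction needs a relative version of the classical $L$-move Markov theorem (braiding isotopies without disturbing $B$) together with a normal-form analysis of a braided band move; the backward direction is a routine check that each allowed braid move closes to an isotopy in $M$.

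\emph{The easy direction.} Suppose two geometric mixed braids $B\cup\beta_1$ and $B\cup\beta_2$ differ by mixed braid isotopy, by $L$-moves not touching $B$, and by geometric $\mathbb{Z}$-braid band moves. Upon taking closures, mixed braid isotopy and $L$-moves away from $B$ are instances of isotopy inside $S^3\backslash\widehat{B}$: an $L$-move is, up to isotopy, the braided form of a local trivial isotopy arc stretched over or under the diagram, and by hypothesis it never touches $\widehat{B}$. A geometric $\mathbb{Z}$-braid band move closes to an $\alpha$- or $\beta$-type band move (the preliminary positive or negative twist of the little band accounts for the framing). Hence by Theorem~\ref{reidemintegral} the closures represent isotopic links in $M$.

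\emph{The hard direction.} Let $L_1$, $L_2$ be isotopic in $M$ and fix corresponding geometric mixed braids $B\cup\beta_i$; such braids exist by an Alexander-type braiding of a mixed link that keeps $\widehat{B}$ pointwise fixed, and any two braidings of the same mixed link differ by $L$-moves not touching $B$. By Theorem~\ref{reidemintegral}, the diagrams $\widehat{B}\cup\widetilde{L_1}$ and $\widehat{B}\cup\widetilde{L_2}$ are connected by a finite sequence $D_0,D_1,\ldots,D_r$ in which each $D_j\to D_{j+1}$ is either an isotopy move in $S^3\backslash\widehat{B}$ (planar isotopy, a Reidemeister or extended Reidemeister move) or an $\alpha$- or $\beta$-type band move. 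I would braid each $D_j$, keeping $\widehat{B}$ fixed, to a geometric mixed braid $G_j$, and argue by induction on $r$ that consecutive $G_j,G_{j+1}$ differ by the allowed braid moves; composing the chain and absorbing the two end ambiguities between $G_0,G_r$ and the chosen $B\cup\beta_1,B\cup\beta_2$ (again $L$-moves not touching $B$) yields the theorem. For a step of the first kind, the needed statement is precisely the $L$-move Markov theorem \emph{relative to the fixed subbraid $B$}: two geometric mixed braids whose closures are isotopic in $S^3\backslash\widehat{B}$ differ by mixed braid isotopy and $L$-moves not touching $B$. I would obtain it by rerunning the proof of the classical $L$-move theorem (Remark~\ref{Lmove}(i)) inside the complement, cutting the isotopy of the moving part into elementary triangle moves, braiding each and correcting it by an $L$-move; since $\widehat{B}$ is disjoint from the moving part, all corrections can be taken among the moving strands. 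For a step of the second kind, I would braid the band move and then, using that $L$-moves realize conjugation and arbitrary reordering of endpoints (Remark~\ref{Lmove}(i)), bring it up to $L$-equivalence into normal form: a small downward band attached on the right of the relevant surgery strand at the top of the braid, with one positive or negative half-twist, which is exactly the geometric $\mathbb{Z}$-braid band move of Figure~\ref{pbbm}(a),(b); the two orientation types $\alpha,\beta$ are interchanged by a Reidemeister~I move on the moving part, i.e.\ an $L$-move not touching $B$, so a single type suffices.

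\emph{Main obstacle.} The crux is the relative $L$-move theorem invoked above: one must verify that the braiding of an isotopy carried out in $S^3\backslash\widehat{B}$, and every $L$-move it forces, can be performed without ever moving a strand of $B$ --- equivalently, that the proof of the $L$-move theorem localizes in the complement of a fixed closed braid. The reduction of a braided band move to the normal form of Figure~\ref{pbbm}(a),(b) by $L$-moves is the second, and more routine, point.
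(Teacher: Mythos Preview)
Your proposal is correct and follows essentially the same strategy as the paper. Note that the paper does not actually prove this theorem---it is quoted as a background result from \cite{LR1}---but the paper's proof of the rational analogue (Theorem~\ref{geommarkovrat}) is explicitly ``completely analogous to and based on'' the proof of Theorem~5.10 \cite{LR1}, and that proof proceeds exactly as you outline: reduce to the Reidemeister theorem for $M$, invoke the relative $L$-move theorem for the isotopy steps in $S^3\backslash\widehat{B}$, and braid the band move locally. The one concrete detail the paper adds beyond your sketch is the local mechanism for turning a type-$\alpha$ band move into a geometric braid band move: a Reidemeister~I kink is introduced, a positive braid band move is performed, and a Reidemeister~II cancellation recovers the original band move (Figure~\ref{markovproof}); this is the normal-form reduction you allude to but do not spell out.
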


\subsection{Algebraic mixed braids and their equivalence}

Let $M=\chi_{_{\mathbb{Z}}}(S^3, \widehat{B})$. We will pass from the geometric braid equivalence to an algebraic statement for links in $M$.
An \textit{algebraic mixed braid} is a mixed braid on $m+n$ strands such that the first $m$ strands are fixed and form the identity braid on $m$ strands and the next $n$ strands are moving strands and represent a link in the manifold $M$. The set of all algebraic mixed braids on $m+n$ strands forms a subgroup of $B_{m+n}$, denoted $B_{m,n}$, and called {\it mixed braid group}. In [La2] the mixed braid groups $B_{m,n}$ have been introduced and studied and it is shown that $B_{m,n}$ has the presentation:

\begin{equation} \label{B}
B_{m,n} = \left< \begin{array}{ll}  \begin{array}{l}
a_1, \ldots, a_m,  \\
\sigma_1, \ldots ,\sigma_{n-1}  \\
\end{array} &
\left|
\begin{array}{l} \sigma_k \sigma_j=\sigma_j \sigma_k, \ \ |k-j|>1   \\
\sigma_k \sigma_{k+1} \sigma_k = \sigma_{k+1} \sigma_k \sigma_{k+1}, \ \  1 \leq k \leq n-1  \\
{a_i} \sigma_k = \sigma_k {a_i}, \ \ k \geq 2, \   1 \leq i \leq m    \\
 {a_i} \sigma_1 {a_i} \sigma_1 = \sigma_1 {a_i} \sigma_1 {a_i}, \ \ 1 \leq i \leq m  \\
 {a_i} (\sigma_1 {a_r} {\sigma^{-1}_1}) =  (\sigma_1 {a_r} {\sigma^{-1}_1})  {a_i}, \ \ r < i
\end{array} \right.  \end{array} \right>,
\end{equation}

\bigbreak

\noindent where the {\it loop generators} $a_i$ and the {\it braiding generators} $\sigma_j$ are as illustrated in Figure~\ref{Loops and Crossings}.

\begin{figure}
\begin{center}
\includegraphics[width=5.5in]{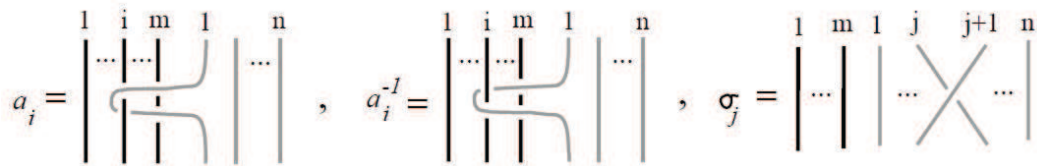}
\end{center}
\caption{ The loop generators $a_i, \ {a^{-1}_i}$ and the braiding generators $\sigma_j$ of $B_{m,n}$. }
\label{Loops and Crossings}
\end{figure}

\begin{figure}
\begin{center}
\includegraphics[width=6.2in]{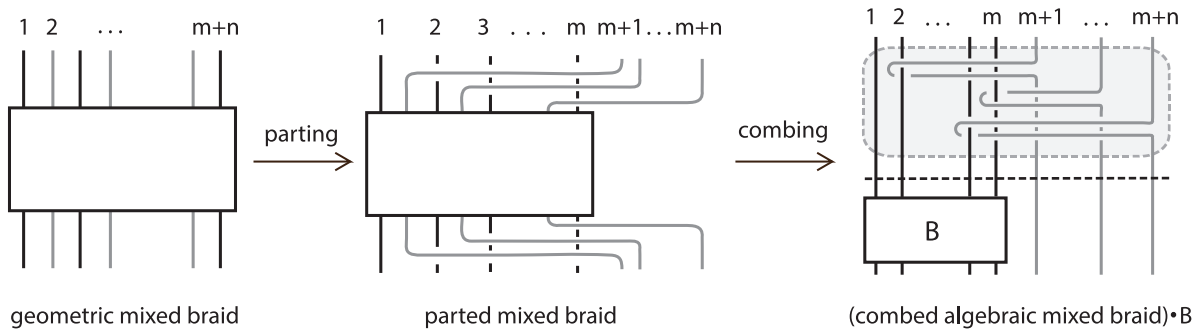}
\end{center}
\caption{ Parting and combing a geometric mixed braid. }
\label{partingandcombing}
\end{figure}

In order to give an algebraic statement for braid equivalence in $M$, we
first part the mixed braids and we translate the geometric $L$-equivalence of Theorem~\ref{geommarkov} to an equivalence of parted mixed braids. {\it Parting} a geometric mixed braid $B \bigcup\beta$ on $m+n$ strands means to separate its endpoints into two different sets, the first $m$ belonging to the subbraid $B$ and the last $n$ to $\beta$, and so that the resulting braids have isotopic closures. This is realized by pulling each pair of corresponding moving strands to the right and {\it over\/} or {\it under\/} each  strand of $B$ that lies on their right. We start from the rightmost pair respecting the position of the
endpoints. This process is called {\it parting} of a geometric mixed braid and the result is a {\it parted mixed braid}. If the strands are pulled always over the strands of $B$, then this parting is called {\it standard parting}. See the middle illustration of Figure~\ref{partingandcombing} for the standard parting of an abstract mixed braid. For more details the reader is referred to \cite{LR2}.

Then, in order to restrict Theorem~\ref{geommarkov} to the set of all parted mixed braids related to the manifold $M$, we need the following moves between parted mixed braids. {\it Loop conjugation} of a parted mixed braid $\beta$ is its  concatenation by a loop $a_i$ (or by ${a^{-1}_i}$) from above and from ${a^{-1}_i}$ (corr. $a_i$) from below, that is $\beta \sim a^{\pm1} \beta a^{\mp1}$. As it turns out, two partings of a geometric mixed braid differ by loop conjugations (cf. Lemma~2 \cite{LR2}). A {\it parted $L$-move} is an $L$-move between parted mixed braids. Further, a mixed braid with an $L$-move performed can be parted to a parted mixed braid with a parted $L$-move performed. Namely we make the parting consistent with the label of the $L$-move: an $L_o$ move will be parted by pulling over all other strands, while an $L_u$ move will be parted by pulling under all other strands (cf. Lemma~3 \cite{LR2}).
A {\it parted $\mathbb{Z}$-braid band move} is a geometric $\mathbb{Z}$-braid band move between parted mixed braids, such that it takes place at the top part of the braid and the little band {\it starts from the last strand} of the moving subbraid and it {\it moves over\/} each moving strand and each component of the surgery braid, until it reaches from the right the specific component, and then is followed by parting (see Figure~\ref{pbbm}(c)). Moreover, performing a  $\mathbb{Z}$-braid band move on a mixed braid and then parting, the result is equivalent, up to $L$-moves and loop conjugation, to performing a parted $\mathbb{Z}$-braid band move (cf. Lemma~5 \cite{LR2}).

\begin{thm}[Parted mixed braid equivalence for $M=\chi_{_{\mathbb{Z}}} (S^3, \widehat B)$, Theorem~3 \cite{LR2}]\label{partedcco} Two
oriented links in  $M=\chi_{_{\mathbb{Z}}} (S^3, \widehat B)$ are isotopic if and only if any two
corresponding  parted mixed braids differ by a finite sequence of parted mixed braid isotopies, parted
$L$-moves, loop  conjugations and parted $\mathbb{Z}$-braid band moves.
\end{thm}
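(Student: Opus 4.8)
The plan is to derive this statement from the geometric mixed braid equivalence of Theorem~\ref{geommarkov} by showing that the operation of parting is compatible with each of the three types of moves appearing there. For each of the two isotopic links fix a geometric mixed braid representative as in Theorem~\ref{geommarkov}; these differ by a finite sequence of mixed braid isotopies, $L$-moves not touching $B$, and geometric $\mathbb{Z}$-braid band moves. Parting every intermediate geometric mixed braid in this sequence produces a sequence of parted mixed braids, and the task is to check that consecutive terms of the parted sequence differ only by parted mixed braid isotopies, parted $L$-moves, loop conjugations and parted $\mathbb{Z}$-braid band moves.

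First I would establish that parting is well defined up to the allowed moves: although the outcome of parting a given geometric mixed braid depends on choices (which pair of moving strands to pull first, over or under the strands of $B$), any two partings of the same geometric mixed braid differ by a finite sequence of loop conjugations. This is Lemma~2 of \cite{LR2}, and it is exactly what makes ``the parted mixed braid associated to a link'' meaningful in the statement. With this in hand the three compatibility steps go as follows. A mixed braid isotopy between geometric mixed braids, after parting the two ends, becomes a parted mixed braid isotopy followed by a change of parting, hence by Lemma~2 it is absorbed into parted isotopy and loop conjugations. An $L$-move not touching $B$ can, by Lemma~3 of \cite{LR2}, be realized after parting by choosing the parting consistently with the label of the $L$-move---pulling over for an $L_o$-move and under for an $L_u$-move---so that parting the $L$-move yields a parted $L$-move, again up to loop conjugation by the well-definedness of parting. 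Finally, a geometric $\mathbb{Z}$-braid band move followed by parting is, by Lemma~5 of \cite{LR2}, equivalent up to $L$-moves and loop conjugation to a parted $\mathbb{Z}$-braid band move, where one invokes Remark~\ref{Lmove}(ii) to bring the band to the top and to the right of the relevant surgery component before it is slid along it.

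For the converse I would observe that every parted mixed braid is in particular a geometric mixed braid, and that parted mixed braid isotopies, parted $L$-moves and parted $\mathbb{Z}$-braid band moves are special cases of the corresponding geometric moves in Theorem~\ref{geommarkov}. The only move of the four that is not literally one of those is loop conjugation $\beta \sim a_i^{\pm 1}\,\beta\, a_i^{\mp 1}$; but this is a braid conjugation, and by Remark~\ref{Lmove}(i) braid conjugation is generated by $L$-moves, so it leaves the isotopy class in $M$ of the closure unchanged. Hence a finite sequence of the four parted moves between two parted mixed braids induces an isotopy of the corresponding links in $M$, which closes the equivalence.

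The step I expect to be the main obstacle is the compatibility of parting with the braid band moves. A geometric band move may be performed at an arbitrary place of the mixed braid, with an arbitrary number of strands of $B$ lying between the little band and its target surgery component; normalizing it---sliding the band to the top, routing it to the right over everything, and then parting the newly created pair of strands---is precisely where the auxiliary $L$-moves and loop conjugations become unavoidable, and keeping exact track of them is the delicate part of the argument. By contrast, the mixed braid isotopy and $L$-move cases reduce to bookkeeping once the well-definedness of parting (Lemma~2 of \cite{LR2}) is in place.
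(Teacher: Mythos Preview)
Your proposal is correct and follows essentially the same approach as the paper: the theorem is quoted from \cite{LR2}, and the paragraph preceding its statement outlines precisely the ingredients you use---restricting Theorem~\ref{geommarkov} to parted mixed braids via Lemma~2 of \cite{LR2} (different partings differ by loop conjugations), Lemma~3 of \cite{LR2} (an $L$-move parts to a parted $L$-move with parting chosen consistently with its label), and Lemma~5 of \cite{LR2} (a geometric $\mathbb{Z}$-braid band move followed by parting is, up to $L$-moves and loop conjugation, a parted $\mathbb{Z}$-braid band move). Your treatment of the converse via Remark~\ref{Lmove}(i) is also in line with the paper's framework.
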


We now comb the parted mixed braids in order to translate the parted mixed braid equivalence to an equivalence between algebraic mixed braids.  {\it Combing} a parted mixed braid means to separate the knotting and linking of the moving part away from the fixed subbraid using mixed braid isotopy. More precisely, let $\Sigma_k$ denote the crossing between the $k^{th}$ and the $(k+1)^{st}$ strand of the fixed subbraid. Then, for all $j=1,\ldots,n-1$ and $k=1,\ldots,m-1$ we have: $\Sigma_k \sigma_j = \sigma_j \Sigma_k$.
Thus, the only generating elements of the moving part that are affected by the combing are the loops $a_i$. This is illustrated in Figure~\ref{combing}.  In Lemma~6 \cite{LR2} formuli are given for the effect of combing on the $a_i$'s (see Lemma~\ref{combinglem} below).

\begin{figure}
\begin{center}
\includegraphics[width=5.2in]{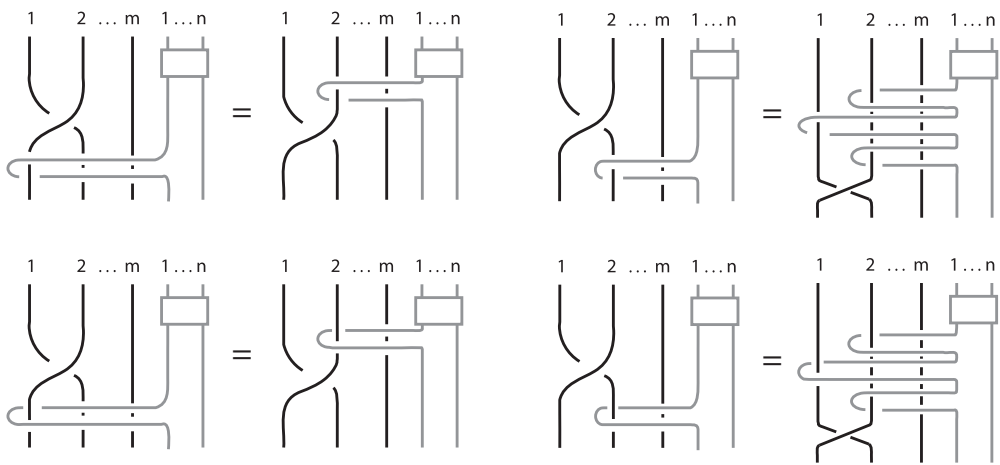}
\end{center}
\caption{ Combing a parted mixed braid. }
\label{combing}
\end{figure}

The effect of combing a parted mixed braid is to separate it into two distinct parts:
the {\it algebraic part} at the top, which has all fixed strands forming the identity braid, so is an element of some mixed braid group $B_{m,n}$, and which contains all the knotting and linking information of the link $L$ in $M$;
the {\it coset part} at the bottom, which contains only the fixed subbraid $B$ and an identity braid for the moving part (see right hand most illustration in Figure~\ref{partingandcombing}). Let now $C_{m,n}$ denote the set of parted mixed braids on $n$ moving strands with fixed subbraid $B$. Concatenating two elements of $C_{m,n}$ is not a closed operation since it alters the braid description of the manifold. However, as a result of the combing, for the fixed subbraid $B$ the set $C_{m,n}$ is a coset of $B_{m,n}$ in $B_{m+n}$. Fore details on the above the reader is referred to \cite{La2}.

Translating the parted braid equivalence into an equivalence between algebraic mixed braids, we will obtain an algebraic statement of Theorem~\ref{partedcco}. Since loop conjugation does not take into account the combing of the loop through the fixed subbraid, we need the notion of {\it combed loop conjugation}. A combed loop conjugation is a move between algebraic mixed braids and is the result of a loop conjugation on a combed mixed braid followed by combing, so it can be described algebraically as: $\beta \sim \alpha_i^{\mp1} \beta \rho_i^{\pm1}$ for $\beta, a_i, \rho_i \in B_{m,n}$, where $\rho_i$ is the combing of the loop  $a_i$ through the fixed subbraid $B$. We also define {\it algebraic $M$-conjugation} of an algebraic mixed braid to be its conjugation by a crossing $\sigma_j$ (or by ${\sigma^{-1}_j}$). An {\it algebraic $M$-move} is defined to be the insertion of a crossing ${\sigma^{\pm 1}_n}$ on the right hand side of an algebraic mixed braid. Finally, an {\it algebraic $L$-move} is defined to be a $L$-move between algebraic mixed braids. An algebraic $L$-move has the following algebraic expression for an $L_o$-move and an $L_u$-move respectively:

\begin{equation}
\begin{array}{lll}
\alpha &=& \alpha_1\alpha_2 \stackrel{L_o}{\sim}
\sigma_i^{-1}\ldots \sigma_n^{-1} \alpha_1' \sigma_{i-1}^{-1}\ldots
\sigma_{n-1}^{-1}\sigma_n^{\pm 1}\sigma_{n-1} \ldots \sigma_i
\alpha_2' \sigma_n \ldots \sigma_i \\
\alpha&=&\alpha_1\alpha_2 \stackrel{L_u}{\sim}
\sigma_i\ldots \sigma_n \alpha_1' \sigma_{i-1}\ldots
\sigma_{n-1}\sigma_n^{\pm 1}\sigma_{n-1}^{-1}\ldots\sigma_i^{-1}
\alpha_2' \sigma_n^{-1}\ldots\sigma_i^{-1}
\end{array}
\end{equation}

\noindent  where $\alpha_1$,
 $\alpha_2$ are elements of $B_{m,n}$ and $\alpha_1'$,
 $\alpha_2' \in B_{m,n+1}$ are obtained from $\alpha_1$,
 $\alpha_2$ by replacing each $\sigma_j$ by $\sigma_{j+1}$ for
 $j=i,\ldots,n-1$.

Note that algebraic $M$-conjugation, the algebraic $M$-moves and the algebraic $L$-moves commute with combing. Note also that Remark~\ref{Lmove}(i) applies equally to the case of algebraic mixed braids (cf. Lemma~4 \cite{LR2}).

We finally need to understand how a parted $\mathbb{Z}$-braid band move is combed through the surgery braid $B$.

\begin{defn}[Definition~7 \cite{LR2}] \label{twistband} \rm  An  {\it algebraic  $\mathbb{Z}$-braid band move\/} is defined to be a parted band move
between algebraic mixed braids (see top part of Figure~\ref{pbbm}(d)). Setting:
\[
\lambda_{n-1,1} := \sigma_{n-1} \ldots \sigma_1  \mbox{ \ \ and \ \ }
t_{k,n} := \sigma_n \ldots \sigma_1 a_k {\sigma^{-1}_1} \ldots {\sigma^{-1}_n},
\]
an algebraic band move has the following algebraic expression:
\[
\beta_1 \beta_2 \ \sim \ \beta^{\prime}_1 \, {t^{p_k}_{k,n}} \,
{\sigma^{\pm 1}_n} \, \beta^{\prime}_2,
\]
\noindent where $\beta_1, \beta_2 \in B_{m,n}$ and $\beta^{\prime}_1, \beta^{\prime}_2  \in
B_{m,n+1}$  are the words $\beta_1, \beta_2$ respectively with the substitutions:
\[
\begin{array}{lcl}
  {a^{\pm 1}_k} & \longleftrightarrow & {[({\lambda^{-1}_{n-1,1}}
{\sigma^{2}_n} \lambda_{n-1,1}) \, a_k]}^{\pm 1}    \\

  {a^{\pm 1}_i} & \longleftrightarrow &   ({\lambda^{-1}_{n-1,1}} {\sigma^{2}_n}
 \lambda_{n-1,1}) \, {a^{\pm 1}_i} \, ({\lambda^{-1}_{n-1,1}} {\sigma^{2}_n}
\lambda^{-1}_{n-1,1}),   \mbox{ \ \ if \ } i < k   \\

 {a^{\pm 1}_i}  & \longleftrightarrow & {a^{\pm 1}_i},  \mbox{ \ \ if \ } i > k.    \\
 \end{array}
\]

Further, a {\it combed algebraic $\mathbb{Z}$-braid band move\/} is a move between algebraic mixed braids and is defined to be a parted $\mathbb{Z}$-braid band move that has been combed through $B$. So it is the composition of an algebraic $\mathbb{Z}$-braid band move with the combing of the parallel strand and it has the following algebraic expression:
\[
\beta_1 \beta_2 \ \sim \ \beta^{\prime}_1 \, {t^{p_k}_{k,n}} \,
{\sigma^{\pm 1}_n} \, \beta^{\prime}_2 \, r_k,
\]
\noindent where $r_k$ is the combing
 of the parted parallel strand to the $k^{th}$ surgery strand through the surgery braid.
 \end{defn}

The group $B_{m,n}$ embeds naturally into the group $B_{m,n+1}$. We shall denote $B_{m,\infty}=\bigcup_{n=1}^{\infty }B_{m,n}$
and similarly $C_{m,\infty}=\bigcup_{n=1}^{\infty }C_{m,n}$.

We are now in position to give the algebraic Markov theorem for $M=\chi_{_{\mathbb{Z}}} (S^3, \widehat B)$.

\begin{thm}[Algebraic Markov Theorem for $M=\chi_{_{\mathbb{Z}}} (S^3, \widehat B)$, Theorem~5 \cite{LR2}] \label{algcco} \ Two  oriented links
in  $M=\chi_{_{\mathbb{Z}}} (S^3, \widehat B)$ are isotopic if and only if any two
corresponding algebraic mixed braid representatives in  $B_{m,\infty}$  differ by a finite
sequence of the following moves:

\vspace{.03in}
\noindent (1) \ Algebraic $M$-moves: \ ${\beta}_1 {\beta}_2 \sim
{\beta}_1{\sigma^{\pm 1}_n}{\beta}_2, \ \ for \ {\beta}_1,
{\beta}_2  \in B_{m,n}$,

\noindent (2) \ Algebraic $M$-conjugation: \
$\beta \sim {\sigma^{\pm 1}_j} \beta {\sigma^{\mp 1}_j}$, \ \ for \ $\beta, \sigma_j \in B_{m,n}$,

\vspace{.03in}
\noindent (3) \ Combed loop conjugation: \
$\beta  \sim  {a^{\mp 1}_i} \beta {\rho^{\pm 1}_i}$, \ \  for \ $\beta, a_i, \rho_i \in B_{m,n}$, where
$\rho_i$ is the combing of the loop  $a_i$ through $B$,

\vspace{.03in}
\noindent (4) \  Combed algebraic braid band moves: \ For  for every $k=1,\ldots,m$ we have:
\[ \beta_1 \beta_2 \ \sim \ \beta^{\prime}_1 \, {t^{p_k}_{k,n}} \,
{\sigma^{\pm 1}_n} \, \beta^{\prime}_2 \, r_k,
\]
\noindent where  $\beta_1, \beta_2 \in B_{m,n}$ and  $\beta^{\prime}_1, \beta^{\prime}_2  \in B_{m,n+1}$ are as in
Definition~\ref{twistband} and where $r_k$  is the  combing of the parted parallel strand  to the $k$th surgery strand through $B$.

\smallbreak

Equivalently, by a finite sequence of algebraic mixed braid relations and the following moves:

\vspace{.03in}
\noindent (1$'$) \,  algebraic $L$-moves,

\noindent (2$'$) \, combed loop conjugations,

\noindent (3$'$) \ combed algebraic $\mathbb{Z}$-braid band moves.
\end{thm}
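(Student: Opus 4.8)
The plan is to obtain Theorem~\ref{algcco} from the parted mixed braid equivalence of Theorem~\ref{partedcco} by applying the combing operation to every braid and to every move appearing there; Theorem~\ref{partedcco} itself rests on the geometric equivalence of Theorem~\ref{geommarkov}, so the whole argument is a two-stage reduction: geometric to parted (Theorem~\ref{geommarkov} to Theorem~\ref{partedcco}), and parted to algebraic. First I would set up the combing map precisely: combing a parted mixed braid $B\cup\beta$ on $m+n$ strands separates it, using only mixed braid isotopy, into an \emph{algebraic part} lying in $B_{m,n}$ and a \emph{coset part} consisting of $B$ together with the trivial braid on the $n$ moving strands. Because $\Sigma_k\sigma_j=\sigma_j\Sigma_k$ for all $k,j$, the only moving generators affected by combing are the loops $a_i$, and the combing formulae of Lemma~\ref{combinglem} (Lemma~6 \cite{LR2}) give explicit words $\rho_i$ for them. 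Since $B$ is fixed once and for all, the coset part carries no information, so an isotopy class of links in $M$ is recorded faithfully by the algebraic part, an element of $B_{m,\infty}$, well defined up to the defining relations of Eq.~\ref{B}.

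Next I would translate the four types of moves of Theorem~\ref{partedcco} one at a time. Parted mixed braid isotopy becomes a sequence of the defining relations of the $B_{m,n}$'s together with the commutations $\Sigma_k\sigma_j=\sigma_j\Sigma_k$; after combing the latter are absorbed into the coset part and disappear, leaving only the algebraic mixed braid relations. A parted $L$-move becomes an algebraic $L$-move, with the explicit $L_o$ and $L_u$ expressions displayed above: since algebraic $L$-moves, algebraic $M$-conjugation and algebraic $M$-moves involve only the $\sigma_j$'s and the new moving strands — never a fixed strand — they commute with combing, so ``$L$-move then comb'' agrees with ``comb then algebraic $L$-move''. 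A loop conjugation $\beta\sim a_i^{\pm1}\beta a_i^{\mp1}$ followed by combing becomes precisely a combed loop conjugation $\beta\sim a_i^{\mp1}\beta\rho_i^{\pm1}$, obtained by pushing the two conjugating loops through $B$ via the words $\rho_i$.

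The substantive step is the braid band move. Starting from a parted $\mathbb{Z}$-braid band move — which, by Lemma~5 \cite{LR2}, may be taken at the top of the braid, with the little twisted band running from the last moving strand over everything until it reaches the $k$th surgery component from the right, followed by re-parting — one must read off its algebraic effect. Passing the new parallel moving strand over the strands of $B$ and re-parting contributes the substitutions $a_k^{\pm1}\leftrightarrow[(\lambda_{n-1,1}^{-1}\sigma_n^2\lambda_{n-1,1})a_k]^{\pm1}$, the conjugated forms for $a_i$ with $i<k$, and $a_i\leftrightarrow a_i$ for $i>k$ of Definition~\ref{twistband}, while the $p_k$-fold twist of the little band inserts the factor $t_{k,n}^{p_k}\sigma_n^{\pm1}$; combing the new parallel strand through $B$ finally appends $r_k$, yielding the combed algebraic $\mathbb{Z}$-braid band move. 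The main obstacle is exactly this bookkeeping — tracking the torus-braid twist, the loopings of the parallel strand with $B$, and the combing simultaneously, with the correct exponents and the correct indexing of the $a_i$'s — together with verifying (via Lemma~5 \cite{LR2}) that performing the move and then parting differs from a parted band move only by $L$-moves and loop conjugations, which are already in the list. Everything else is formal. For the ``equivalently'' reformulation one invokes the algebraic analogue of Remark~\ref{Lmove}(i) (cf.\ Lemma~4 \cite{LR2}): algebraic $L$-moves generate algebraic $M$-moves and algebraic $M$-conjugation, so the move sets (1)--(4) and (1$'$)--(3$'$) generate the same equivalence relation; combined with the combing translations above, this completes the proof.
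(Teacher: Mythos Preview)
Your proposal is correct and follows exactly the strategy the paper sketches in the exposition leading up to the statement; note, however, that the paper does not actually give a proof of Theorem~\ref{algcco} --- it is quoted as a background result (Theorem~5 of \cite{LR2}). The surrounding text in Section~1.3 outlines precisely the reduction you describe: comb parted mixed braids to separate the algebraic part in $B_{m,n}$ from the coset part, observe that algebraic $M$-moves, $M$-conjugation and $L$-moves commute with combing, translate loop conjugation into combed loop conjugation via the words $\rho_i$, and read off the combed algebraic braid band move from Definition~\ref{twistband}; the equivalence of the two move lists is handled via Remark~\ref{Lmove}(i)/Lemma~4 of \cite{LR2}, just as you say.
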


\section{The Reidemeister Theorem for links in  $3$-manifolds with rational surgery description}

Let $M$ be a c.c.o. $3$-manifold obtained from $S^3$ by rational surgery, that is surgery along a framed link $\widehat{B}$ with rational coefficients. We shall denote  $M=\chi_{_{\mathbb{Q}}}(S^3, \widehat{B})$. Let also $L$ be an oriented link in $M$. By the discussion in Section~1.1, isotopy in $M$ is translated into isotopy in $S^3\backslash \widehat{B}$ together with the two types, $\alpha$ and $\beta$, of band moves for mixed links in $S^3$.
The band moves in this case are described as follows.
Let $c$ be a component of $\widehat{B}$ with framing $p/q$. The specified parallel curve $l$ of $c$ is a $(p,q)$-torus knot on the boundary of a tubular neighborhood of $c$ which, by construction, bounds a disc in $M$. Then, a {\it $\mathbb{Q}$-band move} along $c$ is the connected sum of a component of $L$ with the $(p,q)$-torus knot $l$ and there are two types, $\alpha$ and $\beta$, according to the orientations. The two types of band moves are illustrated in Figure~\ref{bmoves}, where $c$ is a trefoil knot with $2/3$ surgery coefficient and where ``band move" is shortened to ``b.m.".

\begin{figure}
\begin{center}
\includegraphics[width=3.6in]{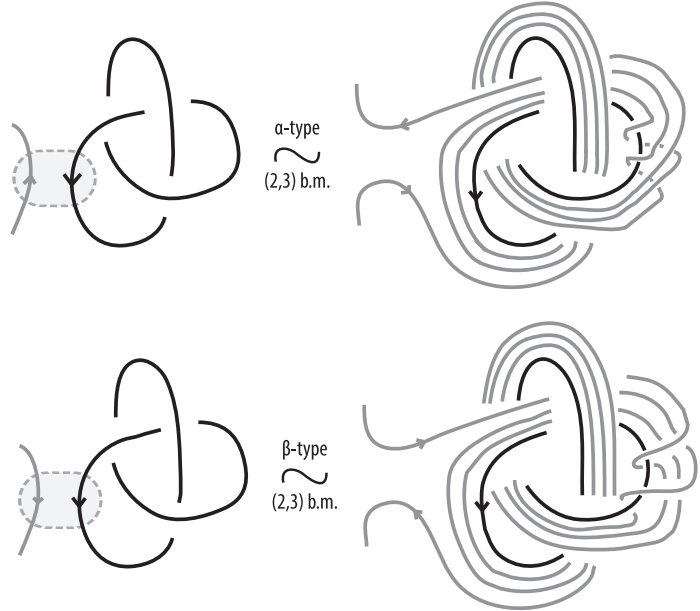}
\end{center}
\caption{ The two types of $\mathbb{Q}$-band moves. }
\label{bmoves}
\end{figure}

Clearly, Theorem~\ref{reidemintegral} applies also to $M=\chi_{_{\mathbb{Q}}}(S^3, \widehat{B})$. Namely we have:

\begin{thm}[Reidemeister for  $M=\chi_{_{\mathbb{Q}}}(S^3, \widehat{B})$ with two types of band moves] \label{reidemrationalboth}
Two oriented links $L_1$, $L_2$ in  $M=\chi_{_{\mathbb{Q}}}(S^3, \widehat{B})$ are isotopic if and only if any two corresponding mixed
link diagrams of theirs, $\widehat{B} \cup \widetilde{L_1}$ and $\widehat{B} \cup \widetilde{L_2}$, differ by isotopy in $S^3 \backslash \widehat{B}$
together with a finite sequence of the two types $\alpha$ and $\beta$ of band moves.
\end{thm}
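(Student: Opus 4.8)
The plan is to run the proof of Theorem~\ref{reidemintegral} from \cite{LR1} essentially verbatim, the point being that that argument never uses integrality of the surgery coefficients except in naming the framing curve $l$. First I would fix the topological picture: for each component $c$ of $\widehat B$ with coefficient $p/q$, the manifold $M=\chi_{_{\mathbb{Q}}}(S^3,\widehat B)$ is obtained by deleting an open tubular neighbourhood $N(c)$ and gluing back a solid torus $V_c$ so that the meridian $\partial D_c$ of $V_c$ is sent to the curve $l$ of slope $p/q$ on $\partial N(c)$ --- the $(p,q)$-torus knot (a $(p,q)$-cable of $c$ when $c$ is knotted) that by construction bounds the disc $D_c$ in $M$. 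The complement $M\setminus\bigcup_c\mathrm{core}(V_c)$ is canonically identified with $S^3\setminus\widehat B$, so by general position any link $L\subset M$ may be pushed off the cores and represented by a mixed link $\widehat B\cup L$, with diagrams well defined up to the (extended) Reidemeister moves and planar isotopy.

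Next I would dispatch the ``if'' direction: an isotopy of $\widehat B\cup\widetilde L$ in $S^3\setminus\widehat B$ takes place inside $M$ and so preserves the isotopy class of $L$ in $M$; and a $\mathbb{Q}$-band move along $c$ --- the band sum of a component of $L$ with $l=\partial D_c$ --- is realised by pushing the relevant arc of $L$ across the disc $D_c\subset M$, which is an ambient isotopy, so both the $\alpha$- and the $\beta$-type $\mathbb{Q}$-band moves come from isotopies in $M$.

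For the ``only if'' direction I would take an ambient isotopy $F:S^1\times[0,1]\to M$ from $L_1$ to $L_2$, put it in general position with respect to the finite union of cores of the $V_c$'s so that $F^{-1}\bigl(\bigcup_c\mathrm{core}(V_c)\bigr)$ is a finite set of interior points, and order these by the $[0,1]$-coordinate, obtaining slabs $S^1\times[t_j,t_{j+1}]$ on which $F$ misses every core. On each slab $F$ lives in $S^3\setminus\widehat B$, hence is recorded on diagrams by isotopy in $S^3\setminus\widehat B$; at each exceptional point $F$ sweeps an arc of $L$ once across a meridian disc $D_c$ of $V_c$, and re-expressing this local move in $S^3\setminus\widehat B$ identifies it as the band sum of that arc with $\partial D_c=l$, i.e.\ as a $\mathbb{Q}$-band move along $c$, of type $\alpha$ or $\beta$ according to the relative orientation of the arc and $l$. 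Concatenating over all slabs and exceptional points yields the required finite sequence of isotopies in $S^3\setminus\widehat B$ and of $\alpha$- and $\beta$-band moves.

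The only genuinely new ingredient over the integral case is the local model at an exceptional point, namely that a single transverse crossing of $\mathrm{core}(V_c)$ produces the band sum with the slope-$p/q$ curve $l$ rather than with a longitudinal parallel of $c$; but this is forced by the gluing recipe, since the meridian disc of $V_c$ is bounded in $M$ precisely by the curve to which $\partial D_c$ is identified, so I do not expect a genuine obstacle here. The subtlest bookkeeping is tracking the two orientation types, handled exactly as in \cite{LR1}, using as before that $\alpha$ and $\beta$ are interchanged by a Reidemeister~I twist in $S^3\setminus\widehat B$.
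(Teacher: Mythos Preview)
Your proposal is correct and matches the paper's approach exactly: the paper does not even write out a proof for this statement, saying only ``Clearly, Theorem~\ref{reidemintegral} applies also to $M=\chi_{_{\mathbb{Q}}}(S^3, \widehat{B})$,'' which is precisely your observation that the argument from \cite{LR1} goes through verbatim once the framing curve $l$ is taken to be the $(p,q)$-torus knot on $\partial N(c)$ rather than an integral longitude. Your sketch of the general-position argument is the standard one underlying Theorem~\ref{reidemintegral}, so you have simply unpacked what the paper leaves implicit.
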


In this section we sharpen Theorem~\ref{reidemrationalboth}. More precisely, we show that only one of the two types of band moves is necessary in order to describe isotopy for links in $M$.
The proof is based on a known contrivance, which was used in the proof of Theorem~5.10 \cite{LR1} (recall Theorem~\ref{geommarkov}) for establishing the sufficiency of the geometric braid band moves in the mixed braid equivalence where the case of integral surgery only is considered (see Figure~\ref{Unknotted surgery}). Theorem~\ref{reidemrational} simplifies the proof of Theorem~\ref{geommarkovrat}. 

\begin{thm} [Reidemeister for  $M=\chi_{_{\mathbb{Q}}}(S^3, \widehat{B})$ with one type of band moves] \label{reidemrational}
Two oriented links $L_1$, $L_2$ in  $M=\chi_{_{\mathbb{Q}}}(S^3, \widehat{B})$ are isotopic if and only if any two corresponding mixed
link diagrams of theirs, $\widehat{B}\bigcup L_1$ and $\widehat{B}\bigcup L_2$, differ by a finite sequence of the band moves of type $\alpha$ (or equivalently of type $\beta$) and isotopy in $S^3\backslash \widehat{B}$.
\end{thm}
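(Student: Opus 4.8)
The plan is to reduce the claim to Theorem~\ref{reidemrationalboth} by showing that every band move of type $\beta$ can be replaced by a band move of type $\alpha$ together with isotopy in $S^3\backslash \widehat{B}$, the statement with $\alpha$ and $\beta$ interchanged being the mirror argument. First I would localise: a band move along a surgery component $c$ of framing $p/q$ is supported in an arbitrarily small neighbourhood $N(c)$ of $c$ together with a collar of $\partial N(c)$, since it replaces an arc of a component $s$ of $L$ by an arc that leaves the rest of the diagram, runs once around the $(p,q)$-torus curve $l$ on $\partial N(c)$, and returns; so it suffices to compare the $\alpha$- and $\beta$-configurations inside such a neighbourhood, with the rest of the mixed link --- in particular every other component of $\widehat{B}$ --- held fixed. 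The key lemma I would isolate is that the $\alpha$- and $\beta$-configurations along $c$ differ by a Reidemeister~I twist of the arc of $s$, exactly as in the integral case recalled in Section~1; once this is known, the contrivance used in the proof of Theorem~5.10 of \cite{LR1} (Figure~\ref{Unknotted surgery}) upgrades it to the desired statement about moves.

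Concretely, I would run that contrivance as follows. Introduce the twist, namely a small Reidemeister~I kink, on the arc of $s$ at the foot of the band; being supported in a ball disjoint from $\widehat{B}$, this is isotopy in $S^3\backslash \widehat{B}$, and it produces a short sub-arc of $s$ with reversed orientation. Now form the band connected sum of this reversed sub-arc with $l$, along a band that hugs $\partial N(c)$: by the orientation convention this is a band move of type $\alpha$, the switch of the attaching sub-arc having converted the $\beta$-orientation data into the $\alpha$-data. What remains after the sum is the desired $\beta$-configuration of $s$ and $l$, together with a partially traversed loop and a short leftover portion of band coming from the kink; this remnant is undone by Reidemeister~I and~II moves supported in a ball disjoint from $\widehat{B}$, that is, by isotopy in $S^3\backslash \widehat{B}$. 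Composing the three steps exhibits the given $\beta$-move as an $\alpha$-move preceded and followed by isotopy in $S^3\backslash \widehat{B}$, and applying this to each $\beta$-move occurring in a sequence provided by Theorem~\ref{reidemrationalboth} yields the theorem.

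The step I expect to be the main obstacle is the key lemma itself, since this is precisely where the rational case departs from \cite{LR1}: there $l$ is a framed longitude of a curve that, in the relevant local picture, may be taken unknotted, whereas here $l$ is a $(p,q)$-torus curve on $\partial N(c)$ and $c$ may be knotted and linked. Two things have to be checked with care. First, the orientation bookkeeping: one must verify that a single local reversal of the attaching sub-arc of $s$ turns the $\alpha$-type attaching of the $(p,q)$-torus band into the $\beta$-type attaching for every pair $p,q$ --- equivalently, that the two configurations drawn in Figure~\ref{bmoves} really do differ by one Reidemeister~I twist of $s$, independently of the surgery coefficient --- and tracking orientations through the torus band is the delicate part of this. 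Second, the isotopies used to remove the leftover loop and band, and to identify the result with the genuine $\beta$-configuration, must be seen to take place in $S^3\backslash \widehat{B}$; here one uses that the band of a $\mathbb{Q}$-band move may be chosen to run along $\partial N(c)$, so that the relevant discs can be kept in an arbitrarily thin collar of $\partial N(c)$ and thus miss $\widehat{B}$ entirely. Granting these verifications, the theorem follows.
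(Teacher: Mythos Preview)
There is a genuine gap: the construction you describe does not produce the $\beta$-configuration. An $\alpha$-move band-sums $s$ with $l$ in its given orientation and hence changes the linking number $lk(s,c)$ by $+lk(l,c)$, while a $\beta$-move band-sums with $-l$ and changes it by $-lk(l,c)$; a Reidemeister~I kink changes nothing. Your sequence ``RI kink, then $\alpha$-move on the reversed sub-arc'' therefore yields a diagram whose linking number with $c$ differs from the genuine $\beta$-result by $2\,lk(l,c)=2p$, which is nonzero for any surgery coefficient $p/q$ with $p\neq 0$. No RI/RII cleanup in a ball disjoint from $\widehat{B}$ --- indeed no isotopy in $S^3\setminus\widehat{B}$ whatsoever --- can close that gap, since linking number with $c$ is an invariant of such isotopy. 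You rightly flag the orientation bookkeeping as the main obstacle, but it is fatal to this route rather than merely delicate: whatever the remark in Section~1.1 about the two types being ``related by a twist of $s$'' is meant to convey, it cannot mean that the two output diagrams differ by a single RI move.

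The paper's contrivance is structurally different from what you describe. Rather than writing the $\beta$-move as a forward $\alpha$-move sandwiched between complement isotopies, it performs a $\beta$-move and then an $\alpha$-move in a strictly thinner tubular neighbourhood of $c$, and shows --- by retracting oppositely-oriented pairs of parallel arcs along $c$ and then cancelling the remaining $2p$ twists pairwise --- that the composite returns $s$ to its initial position via isotopy in $S^3\setminus\widehat{B}$. This exhibits the $\beta$-move as the \emph{inverse} of an $\alpha$-move up to complement isotopy, which suffices since the equivalence relation generated by type-$\alpha$ band moves already contains their reverses; the linking bookkeeping now reads $-p+p=0$, and the real work is the explicit geometric unwinding of the two nested $(p,q)$-torus bands.
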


\begin{proof}
Let $L$ be an oriented link in $M$. By Theorem~\ref{reidemrationalboth}, it suffices to show that a band move of type $\beta$ can be obtained from a band move of type $\alpha$ and isotopy in the knot complement. We will first demonstrate the proof for an unknotted surgery component $c$ with integral coefficient $p$. (Note that integral surgery description can be considered as a special case of rational surgery description.) We shall follow the steps of the proof in Figure~\ref{Unknotted surgery} where $p=2$. We start with performing a band move of type $\beta$ using a component $s$ of the link $L$. In Figure~\ref{Unknotted surgery}(b) we see the two twists of the band move wrapping around the surgery curve $c$ in the righthand sense. Then, using an arc of the same link component $s$, we perform a second band move of type $\alpha$. This will take place within a thinner tubular neighborhood than
the first band move. So, the two twists of the second band move, which also wrap around $c$ in the righthand sense, commute with the two twists of the first band move (see Figure~\ref{Unknotted surgery}(c)). We arrange all $2p$ twists in pairs as follows. We pass one twist from the second band move (the closest) through all twists of the first band move, see Figure~\ref{Unknotted surgery}(f). Since all twists follow the righthand sense, the two innermost twists coming from the second and the first band move, create a little band (Figure~\ref{Unknotted surgery}(f)) which can be eliminated using isotopy in the knot complement of $c$ (Figure~\ref{Unknotted surgery}(g) and Figure~\ref{Twist Cancelation}). This is the cancelation of the first pair of the $2p$ twists. Repeating the same procedure we cancel all $p$ pairs and we end up with the component $s$ of the link $L$ as it was in the initial position before the band moves (see Figures~\ref{Unknotted surgery}(i), (a)).
 
\begin{figure}
\begin{center}
\includegraphics[width=5.9in]{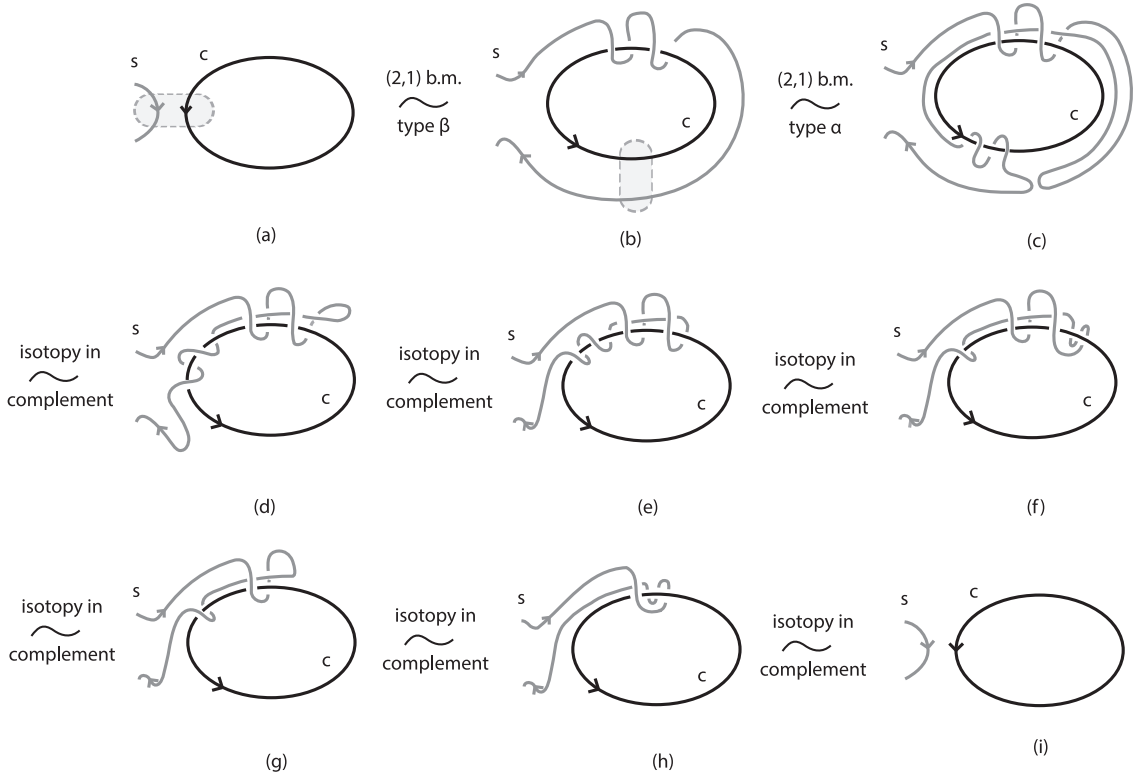}
\end{center}
\caption{A type-$\beta$ band move follows from a type-$\alpha$ band move in the case of integral surgery coefficient. }
\label{Unknotted surgery}
\end{figure}

\begin{figure}
\begin{center}
\includegraphics[width=3.8in]{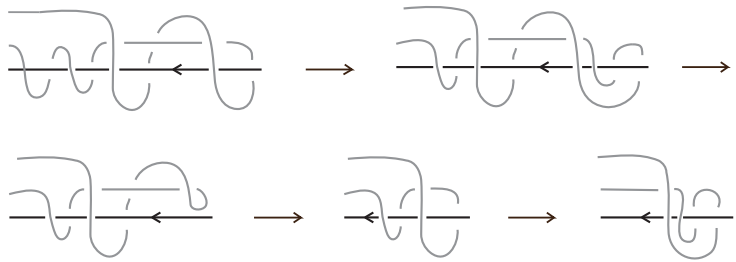}
\end{center}
\caption{ Twist cancelation.}
\label{Twist Cancelation}
\end{figure}

For the more general case of rational surgery along any knot $c$ we follow the same idea. More precisely, we perform a $\mathbb{Q}$-band move of type $\beta$ along $c$ and we obtain an outer $(p,q)$-torus knot. Then, we perform a $\mathbb{Q}$-band move of type $\alpha$ along $c$ and we obtain an inner $(p,q)$-torus knot. In Figure~\ref{Reidemeister Theorem1} we illustrate this for the case where $p=2$, $q=3$ and $c$ a trefoil knot.

\begin{figure}
\begin{center}
\includegraphics[width=6in]{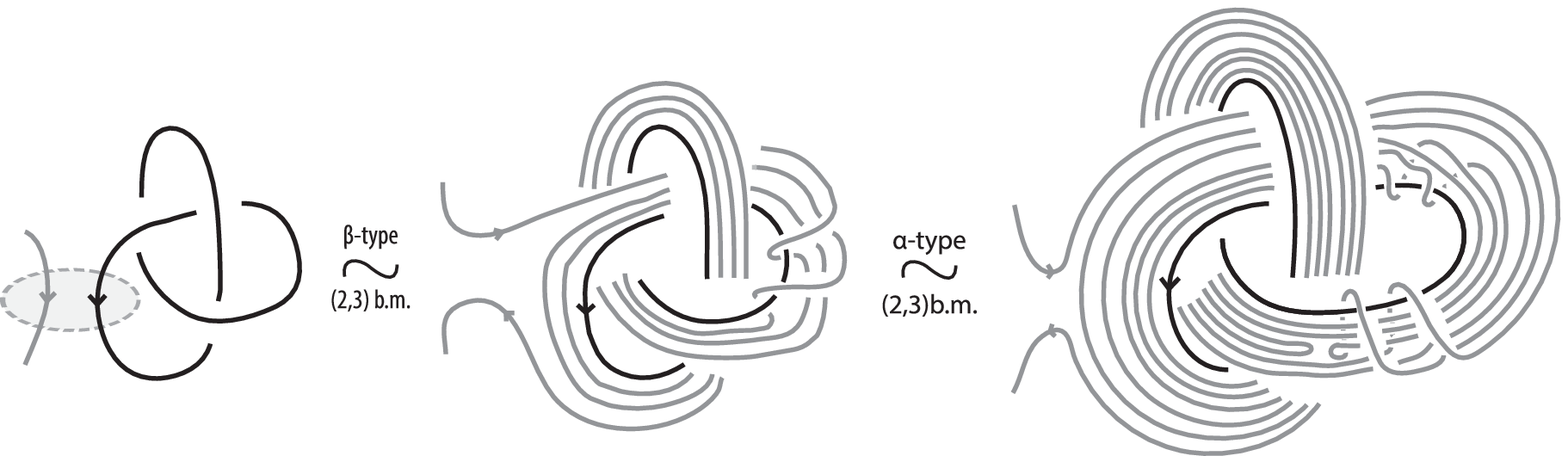}
\end{center}
\caption{ A band move of type $\beta$ followed by a band move of type $\alpha$. }
\label{Reidemeister Theorem1}
\end{figure}

Without loss of generality (by isotopy in the complement of $c$), the second band move is performed on the innermost arc of the $q$ arcs parallel to $c$, creating $q$ new parallel arcs even closer to $c$. After the second $\mathbb{Q}$-band move is performed, the outer arc of the $q$ new arcs and the inner arc of the $q$ arcs coming from the first band move of type $\alpha$ form a band (see shaded area in Figure~\ref{Reidemeister Theorem2}). Then, using isotopy in the complement of $c$, we eliminate this band by pulling it along $c$. This will result in the elimination of $p-q$ pairs of parallel arcs to $c$.
In our example, this is done in two steps (see Figures~\ref{Reidemeister Theorem2} and \ref{Reidemeister Theorem3}).

\begin{figure}
\begin{center}
\includegraphics[width=5.1in]{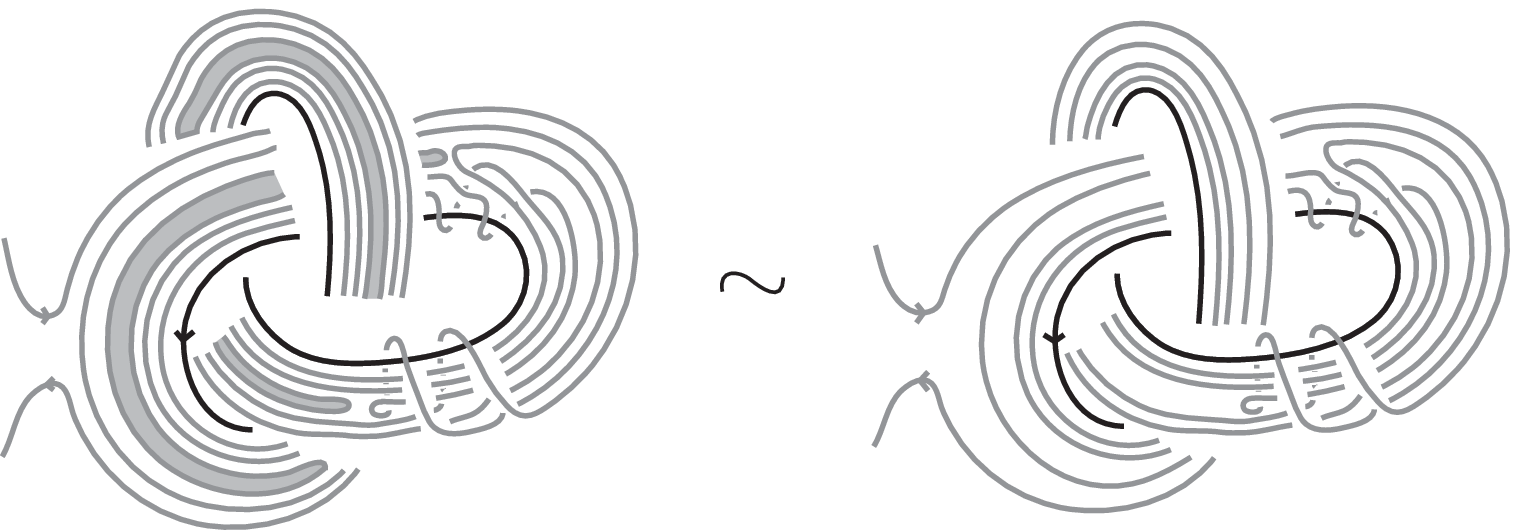}
\end{center}
\caption{ Band with boundary two parallel arcs of opposite orientations. }
\label{Reidemeister Theorem2}
\end{figure}

\begin{figure}
\begin{center}
\includegraphics[width=5.1in]{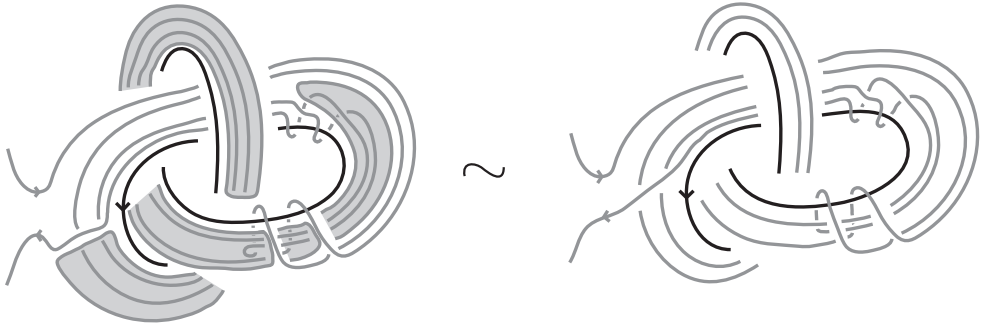}
\end{center}
\caption{ Retracting the band along the surgery component. }
\label{Reidemeister Theorem3}
\end{figure}

As in the case of integral surgery the twists coming from the two band moves commute (see Figure~\ref{Reidemeister Theorem4}). Arranging these $2p$ twists pairwise (Figure~\ref{Reidemeister Theorem5}), they cancel out by the fact that all twists have the same handiness, but opposite orientation. In the end, $s$ is left as in its initial position.

\begin{figure}
\begin{center}
\includegraphics[width=5.1in]{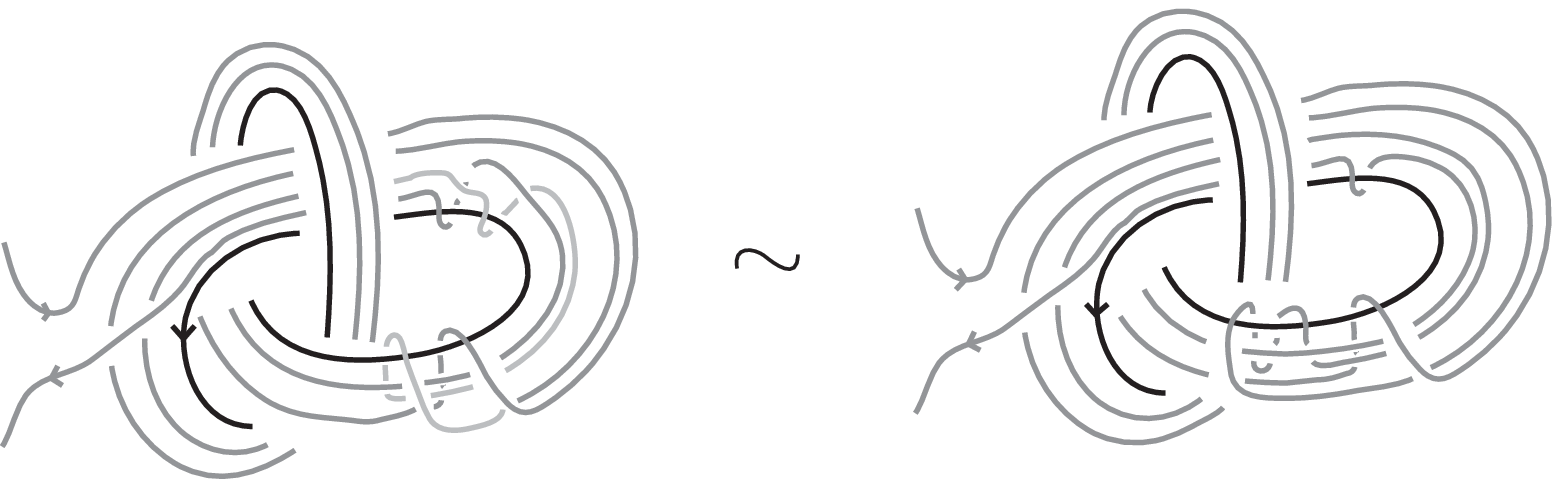}
\end{center}
\caption{ Arranging twists in pairs to form bands. }
\label{Reidemeister Theorem4}
\end{figure}

\begin{figure}
\begin{center}
\includegraphics[width=4.7in]{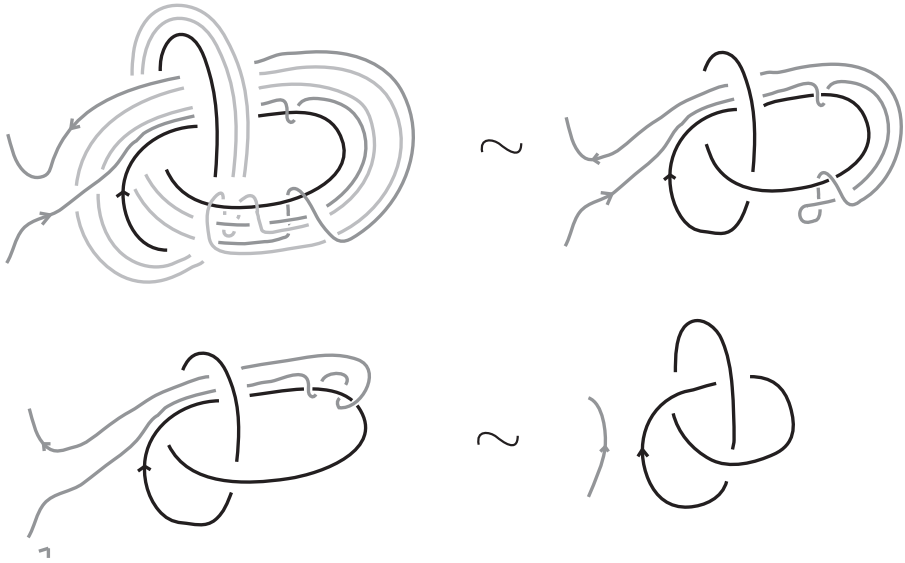}
\end{center}
\caption{ Twist cancelation. }
\label{Reidemeister Theorem5}
\end{figure}

So, a $\mathbb{Q}$-band move of type $\beta$ can be performed using a $\mathbb{Q}$-band move of type $\alpha$ and isotopy in the complement of the surgery component $c$.
The proof of Theorem~\ref{reidemrational} is now concluded.
\end{proof}

\section{Geometric braid equivalence for links in a $3$-manifold with rational surgery description}

In this section we extend Theorem~\ref{geommarkov} to manifolds with rational surgery description, that is $M=\chi_{_{\mathbb{Q}}}(S^3, \widehat{B})$, taking into account the sharpened Reidemeister theorem for $M$ (Theorem~\ref{reidemrational}). We first need the following.

\begin{defn} \rm
A {\it geometric $\mathbb{Q}$-braid band move\/} is a move between geometric mixed braids which is a $\mathbb{Q}$-band
move of type $\alpha$ between their closures. It starts with a little band (an arc of the moving subbraid) close to a surgery strand with surgery coefficient $p/q$. The little band gets first one twist {\it positive\/} or {\it negative\/}, which shall be denoted as $c^{\prime}_{\pm}$ and then is replaced by $q$ strands that run in parallel to all strands of the same surgery component and link only with that surgery strand, wrapping around it $p$ times and, thus, forming a $(p,q)$-torus knot, Figure~\ref{lemma}. This braided $(p,q)$-torus knot is denoted as $d^{\prime}$. A geometric $\mathbb{Q}$-braid band move with a positive (resp. negative) twist shall be called a positive geometric $\mathbb{Q}$-braid band move (resp. negative geometric $\mathbb{Q}$-braid band move).
\end{defn}

\begin{figure}
\begin{center}
\includegraphics[width=3in]{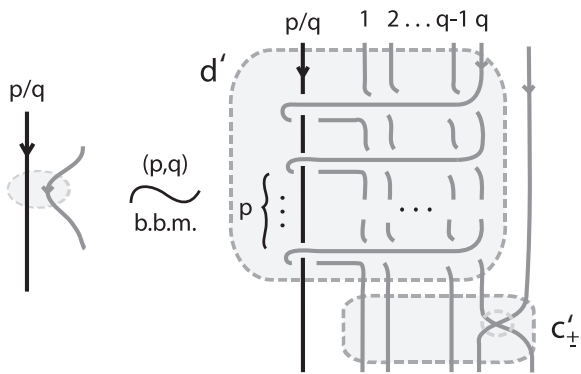}
\end{center}
\caption{ A $\mathbb{Q}$-braid band move.}
\label{lemma}
\end{figure}

By Remark~1(ii) a $\mathbb{Q}$-braid band move may be assumed to take place at the top part of a mixed braid and all strands from a $\mathbb{Q}$-braid band move may be assumed to lie on the righthand side of the surgery strands. We shall now prove the following.

\begin{thm} [Geometric braid equivalence for $M=\chi_{_{\mathbb{Q}}}(S^3, \widehat{B})$]  \label{geommarkovrat}
Two oriented links in $M=\chi_{_{\mathbb{Q}}}(S^3, \widehat{B})$ are isotopic if and only if any two corresponding geometric mixed braids in $S^3$ differ by
mixed braid isotopy, by $L$-moves that do not touch the fixed subbraid $B$ and by the geometric $\mathbb{Q}$-braid band moves.
\end{thm}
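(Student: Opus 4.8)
\emph{Proof proposal.} The plan is to imitate the proof of Theorem~\ref{geommarkov} (Theorem~5.10 of \cite{LR1}), substituting the rational one-band-move sharpening Theorem~\ref{reidemrational} for the integral Reidemeister theorem and the geometric $\mathbb{Q}$-braid band move for the $\mathbb{Z}$-braid band move. For \emph{necessity} I would check that each listed braid move induces an isotopy of the corresponding link in $M$: mixed braid isotopy and $L$-moves that do not touch $B$ are isotopies of the moving part inside the complement $S^3\setminus\widehat B$, hence isotopies in $M$ as well; and a geometric $\mathbb{Q}$-braid band move is, by definition, a $\mathbb{Q}$-band move of type $\alpha$ between the closures, which by Theorem~\ref{reidemrational} realises an isotopy in $M$. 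Thus braid-equivalent mixed braids have isotopic closures.

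For \emph{sufficiency}, start with two geometric mixed braids $B\cup\beta_1$, $B\cup\beta_2$ whose closures are isotopic in $M$. By Theorem~\ref{reidemrational} the mixed link diagrams $\widehat B\cup\widetilde{L_1}$ and $\widehat B\cup\widetilde{L_2}$ are connected by a finite sequence consisting of isotopies in $S^3\setminus\widehat B$ and $\mathbb{Q}$-band moves of type $\alpha$ only. Apply the mixed-link braiding algorithm of \cite{LR1} (which fixes $\widehat B$ as the closed braid $B$) to every mixed link occurring in this sequence, recalling that two braidings of the same mixed link differ by mixed braid isotopy and $L$-moves not touching $B$. It then suffices to prove: (i) if two mixed links differ by an isotopy in $S^3\setminus\widehat B$, their braidings differ by mixed braid isotopy and $L$-moves not touching $B$; and (ii) if $\widehat B\cup\widetilde{L}'$ is obtained from $\widehat B\cup\widetilde{L}$ by one $\mathbb{Q}$-band move of type $\alpha$, then the braiding of $\widehat B\cup\widetilde{L}'$ is obtained from that of $\widehat B\cup\widetilde{L}$ by mixed braid isotopy, $L$-moves not touching $B$, and a single geometric $\mathbb{Q}$-braid band move. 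Claim (i) is exactly the geometric $L$-move Markov theorem for links in the complement $S^3\setminus\widehat B$ proved in \cite{LR1}, applied verbatim; the resulting $L$-moves never touch $B$ since $B$ is part of the fixed subbraid and the isotopy takes place in its complement.

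The substantive point is claim (ii). I would first normalise the band move locally: by isotopy in the complement of the surgery component $c$ (so, within a thin tubular neighbourhood of $c$) the little band may be taken on the innermost of the $q$ parallel arcs, so that after the move one sees the twist $c'_{\pm}$ followed by $q$ new arcs parallel to all arcs of the surgery component and wrapping $p$ times around $c$, i.e. the $(p,q)$-torus pattern $d'$. Braiding the ``before'' and ``after'' diagrams, the braiding performed outside this local tangle is the same for both up to mixed braid isotopy and $L$-moves not touching $B$ (by (i)), so the difference is concentrated in the local tangle. It remains to recognise the braided local tangle as a geometric $\mathbb{Q}$-braid band move up to $L$-equivalence: one moves the tangle to the top of the mixed braid and to the right of all surgery strands — the rational analogue of Remark~\ref{Lmove}(ii), obtained by conjugation, hence by $L$-moves not touching $B$, exactly as in Lemma~5 of \cite{LR2}; and one brings the $q$ parallel strands together with the $(p,q)$-torus braiding into the normal form of the definition, again by mixed braid isotopy and $L$-moves on the moving strands only.

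\emph{Expected main obstacle.} The delicate step is (ii): controlling the braiding of the $(p,q)$-torus framing curve $l$ together with the band, and verifying that it is $L$-equivalent to the standard $d'$ decorated with its twist $c'_{\pm}$, independently of where and how the original band move was performed and of the over/under choices made while braiding. This requires the rational version of the ``slide the band to the top-right'' bookkeeping of \cite{LR2}, together with care that the choice of which of the $q$ parallel arcs receives the new band only alters the outcome by $L$-moves not touching $B$ — precisely the bookkeeping that Theorem~\ref{reidemrational} was designed to lighten, since only the type-$\alpha$ move must be handled.
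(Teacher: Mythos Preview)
Your proposal is correct and follows essentially the same strategy as the paper: reduce to Theorem~\ref{reidemrational} so that only type-$\alpha$ band moves need be handled, invoke \cite{LR1} for claim (i), and localise the analysis of a single band move for claim (ii). For the step you flag as the main obstacle, the paper resolves it by a concrete Reidemeister~I/II trick (Figures~\ref{markovtheorem} and \ref{markovproof}): one inserts a negative kink on the ``before'' diagram, braids, performs a \emph{positive} $\mathbb{Q}$-braid band move, and then observes that unbraiding two up-arcs produces a Reidemeister~II configuration whose cancellation yields the ``after'' diagram---so the whole passage is $L$-equivalence plus one braid band move, exactly as in the integral case.
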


\begin{proof}
The proof is completely analogous to and is based on the proof of Theorem 5.10 \cite{LR1} (Theorem~\ref{geommarkov}).
Let $K_1$ and $K_2$ be two isotopic oriented links in $M$. By Theorem~\ref{reidemrational}, the corresponding mixed links $\widehat{B} \bigcup K_1$ and $\widehat{B} \bigcup K_2$ differ by isotopy in the complement of $\widehat{B}$ and $\mathbb{Q}$-band moves of type $\alpha$. Note that, by Theorem~\ref{reidemrational} we do not need to consider band moves of type $\beta$.
By Theorem 5.10 \cite{LR1}, isotopy in the complement of $\widehat{B}$ translates into geometric braid isotopy and the $L$-moves.
Let now  $\widehat{B} \bigcup K_1$ and $\widehat{B} \bigcup K_2$ differ by a $\mathbb{Q}$-band move of type $\alpha$ (recall Figure~\ref{bmoves}).
Let $\widehat{B} \bigcup \widetilde{K_1}$ and $\widehat{B} \bigcup \widetilde{K_2}$ be two mixed link diagrams of the mixed links $\widehat{B} \bigcup K_1$ and $\widehat{B} \bigcup K_2$ which differ only by the places illustrated in Figure~\ref{markovtheorem}. As in \cite{LR1}, by the braiding algorithm given therein, the diagrams $\widehat{B} \bigcup \widetilde{K_1}$ and $\widehat{B} \bigcup \widetilde{K_2}$ may be assumed braided everywhere except for the places where the $\mathbb{Q}$-band move is performed.

We now braid the up-arc in Figure~\ref{markovtheorem}(b) and obtain a geometric mixed braid $\widehat{B} \bigcup b_1$ corresponding to the diagram $\widehat{B} \bigcup \widetilde{K_1}$ (see Figure~\ref{markovtheorem}(a)). Note that Figure~\ref{markovtheorem}(c) is already in braided form and let $B \bigcup b_2$ denote
the geometric mixed braid corresponding to the diagram $\widehat{B} \bigcup \widetilde{K_2}$.

\begin{figure}
\begin{center}
\includegraphics[width=4.8in]{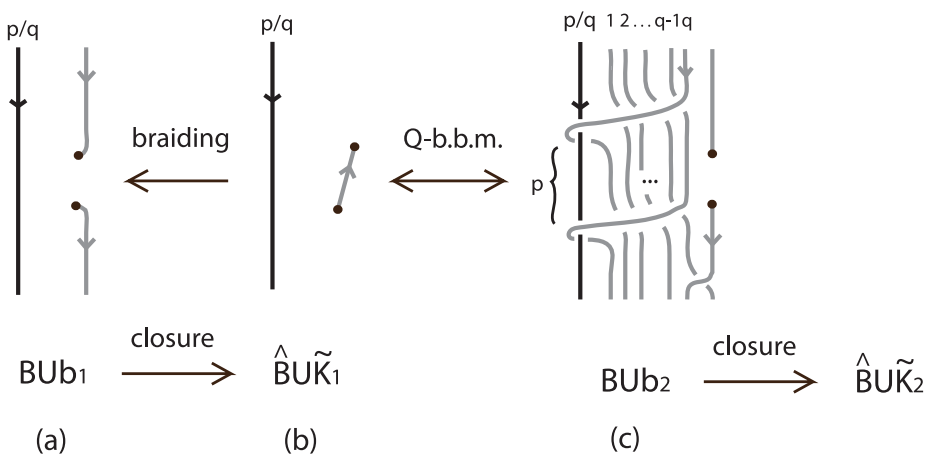}
\end{center}
\caption{ A type $\alpha$ band move (locally) and its braiding. }
\label{markovtheorem}
\end{figure}

We would like to show that the two mixed braids $B \bigcup b_1$ and $B \bigcup b_2$ differ by the moves given in the statement of the Theorem.

We perform a Reidemeister I move on $\widehat{B} \bigcup \widetilde{K_1}$ with a {\it negative} crossing and obtain the diagram $\widehat{B} \bigcup \widetilde{K'_1}$. Then, the corresponding mixed braids, $B \bigcup b_1$ and $B \bigcup b'_1$, differ by mixed braid isotopy and $L$-moves (see Figure~\ref{markovproof}(a) and (b)). We then perform a {\it positive} $\mathbb{Q}$-braid band move on $B \bigcup b'_1$ and obtain the mixed braid $B \bigcup b'_2$. In the closure of $B \bigcup b'_2$ we unbraid and re-introduce the two up-arcs illustrated in Figure~\ref{markovproof}(b), obtaining a diagram $\widehat{B} \bigcup \widetilde{K'_2}$ with the formation of a Reidemeister II move. Performing this move on $\widehat{B} \bigcup \widetilde{K'_2}$ we obtain the diagram $\widehat{B} \bigcup \widetilde{K_2}$, which is already in braided form and its corresponding mixed braid is $B \bigcup b_2$ (see Figure~\ref{markovproof}(c) and (d)). So, the mixed braids $B \bigcup b'_2$ and $B \bigcup b_2$ differ by mixed braid isotopy and $L$-moves.
Therefore, we showed that the braids $B \bigcup b_1$ and $B \bigcup b_2$ in Figure~\ref{markovtheorem}(a) and (c) differ by mixed braid isotopy, $L$-moves and a braid band move.
This concludes the proof.
\end{proof}

\begin{figure}
\begin{center}
\includegraphics[width=6.3in]{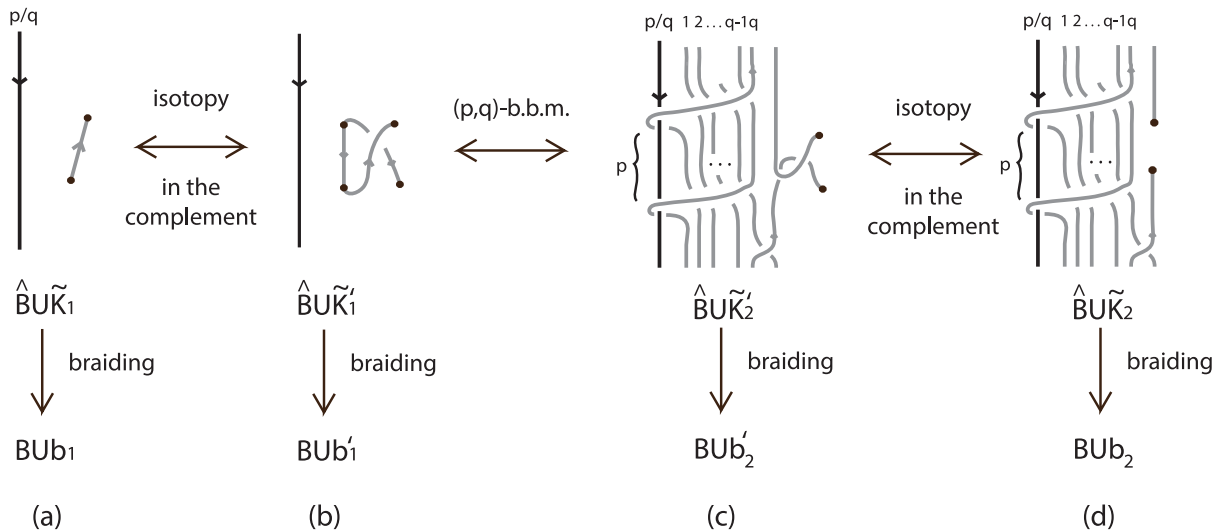}
\end{center}
\caption{ The steps of the proof of Theorem~\ref{geommarkovrat}. }
\label{markovproof}
\end{figure}

\section{Algebraic braid equivalence for links in a $3$-manifold with rational surgery description}

In order to translate the geometric mixed braid equivalence to an equivalence of algebraic mixed braids we follow the strategy in \cite{LR2} for integral surgery description. Namely, we apply first parting and then combing to the geometric mixed braids. What makes things more complicated in the case of rational surgery description is that the surgery braid $B$ is in general not a pure braid and when we apply a $\mathbb{Q}$-braid band move on a mixed braid, the little band that approaches the surgery strand is replaced by $q$ strands that run in parallel to all strands of the same surgery component.

In order to adopt and apply results from \cite{LR2} we need the notion of a {\it $\mathbb{Q}$-strand cable}.

\begin{defn} \label{cable}
\rm
We define a {\it $q$-strand cable} to be a set of $q$ parallel strands coming from a $\mathbb{Q}$-braid band move and following one strand of the specified surgery component.
\end{defn}

Let now $B\bigcup \beta$ be a geometric mixed braid and suppose that a $\mathbb{Q}$-braid band move is performed on $B\bigcup \beta$. Treating the new strands coming from the braid band move as cables running in parallel to the strands of a surgery component, that is, treating each cable as one thickened strand, we may part the geometric mixed braid following the exact procedure as in \cite{LR2}. More precisely, we have the following.

\begin{lemma} \label{partcable}
Cabling and standard parting commute. That is, standard parting of a mixed braid with a $\mathbb{Q}$-braid band move performed and then cabling, is the same as cabling first the set of new strands and then standard parting.
\end{lemma}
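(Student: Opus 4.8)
The plan is to reduce the statement to the single geometric fact that \emph{standard parting preserves the parallelism of the $q$ strands of a cable}. Recall that standard parting of a geometric mixed braid is realized by pulling each pair of corresponding moving strands to the right, \emph{over} every strand of the fixed subbraid $B$ lying on their right, processing the pairs from right to left and keeping the order of the endpoints. By the definition of a $\mathbb{Q}$-braid band move and by Remark~\ref{Lmove}(ii), the $q$ new strands $c_1, \ldots, c_q$ of the cable may be assumed to sit at the top of the braid, to the right of all surgery strands, and to run mutually parallel, each following one strand $s_j$ of the surgery component $c$. Thus both sides of the asserted equality act only on this local configuration, and it suffices to show that after standard parting the strands $c_1, \ldots, c_q$ still form $q$ mutually parallel strands occupying $q$ adjacent positions; bundling them then reproduces exactly the picture obtained by first bundling $c_1, \ldots, c_q$ into one thickened strand and then pulling that thickened strand over the same strands of $B$ (and, conversely, splitting the parted thickened strand into $q$ parallel copies gives back the parted cable). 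Commutativity of parting and cabling follows.

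For this geometric fact I would argue by induction. Since $c_1, \ldots, c_q$ are mutually parallel and adjacent, the set of strands of $B$ lying to the right of $c_i$ is independent of $i$; call its cardinality $N$. Carry out the standard parting in the prescribed right-to-left order. When the first of the $c_i$'s reaches a strand $b$ of $B$ and is pulled over it, every other strand $c_{i'}$ is subsequently pulled over the same strand $b$ along a path running parallel to that of $c_i$; the passage of the whole block $c_1, \ldots, c_q$ over $b$ is therefore a composition of $q$ elementary over-passes, which, by extended Reidemeister~II and~III moves (these belong to isotopy in $S^3\setminus\widehat{B}$ and are hence permitted during parting), is isotopic rel the rest of the diagram to a single over-pass of the thickened strand. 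Inducting on $N$, with the trivial base case $q=1$, shows that the cable structure is carried along unchanged through the whole parting, which is precisely the content of Figure~\ref{stndpartcable}.

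The step I expect to be the main obstacle is making rigorous the clause ``every other strand $c_{i'}$ is pulled over $b$ along a path parallel to that of $c_i$''. A priori the individual parting isotopies are only required to pull each strand over the strands of $B$ on its right, not to do so \emph{compatibly} with the others; one must observe that the isotopy moving $c_i$ can be parallel-transported to the neighbouring strands without creating any new crossings among the $c_{i'}$'s. This is clear from the fact that the cable lies in an annular (genus-zero) neighbourhood of its core, so that no self-linking among $c_1, \ldots, c_q$ can be forced, even though the core may wind through the non-pure surgery braid $B$ in a complicated way; but it is the point at which the argument must be set up with care. A related subtlety is the very first stage of the parting, where the cable must be pulled past the remaining strands $s_1, \ldots, s_k$ of its own surgery component $c$: here one uses that $c_1, \ldots, c_q$ run parallel to the strand $s_j$ they accompany, so that this passage, too, respects the cable structure.
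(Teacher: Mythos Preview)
Your approach is essentially the paper's, only spelled out in more detail: the paper's entire argument is the single sentence ``both cabling and parting a geometric mixed braid respect the position of the endpoints of each pair of corresponding moving strands'', which is exactly the adjacency--preservation you establish by your induction on $N$. Two small points of housekeeping: in the paper $c_1,\ldots,c_k$ denote the $k$ distinct $q$-strand cables (one cable per strand $s_j$ of the surgery component), not the $q$ strands of a single cable, and Remark~\ref{Lmove}(ii) does not place the new strands ``to the right of all surgery strands''---the cables run alongside the $s_j$ and are therefore interspersed with the fixed strands before parting---but neither slip affects your argument, since all you actually use is that the $q$ strands of any one cable are mutually adjacent and hence see the same set of fixed strands on their right.
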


\begin{proof}
Let $B\bigcup \beta$ be a geometric mixed braid on $m+n$ strands and let a $\mathbb{Q}$-braid band move be performed on a surgery component $s$ of $B$. Let also $s_1, \ldots , s_k \in \{ 1, \ldots, m \}$ be the numbers of the strands of the surgery component $s$ and let $c_1, \ldots , c_k$ denote the $q$-strand cables corresponding to $s_1, \ldots , s_k$. On the one hand, after the $\mathbb{Q}$-braid band move is performed and before any cablings occur, we part the geometric mixed braid following the procedure of the standard parting as described in Section~1.3 (see Figure~\ref{stndpartcable}(a)). On the other hand we cable first each set of $q$-strands resulting from the $\mathbb{Q}$-braid band move and then we part the geometric mixed braid with the standard parting, treating each cable as one (thickened) strand, see Figure~\ref{stndpartcable}(b). Since both cabling and parting a geometric mixed braid respect the position of the endpoints of each pair of corresponding moving strands, it follows that cabling and parting commutes.
\end{proof}

\begin{figure}
\begin{center}
\includegraphics[width=6.2in]{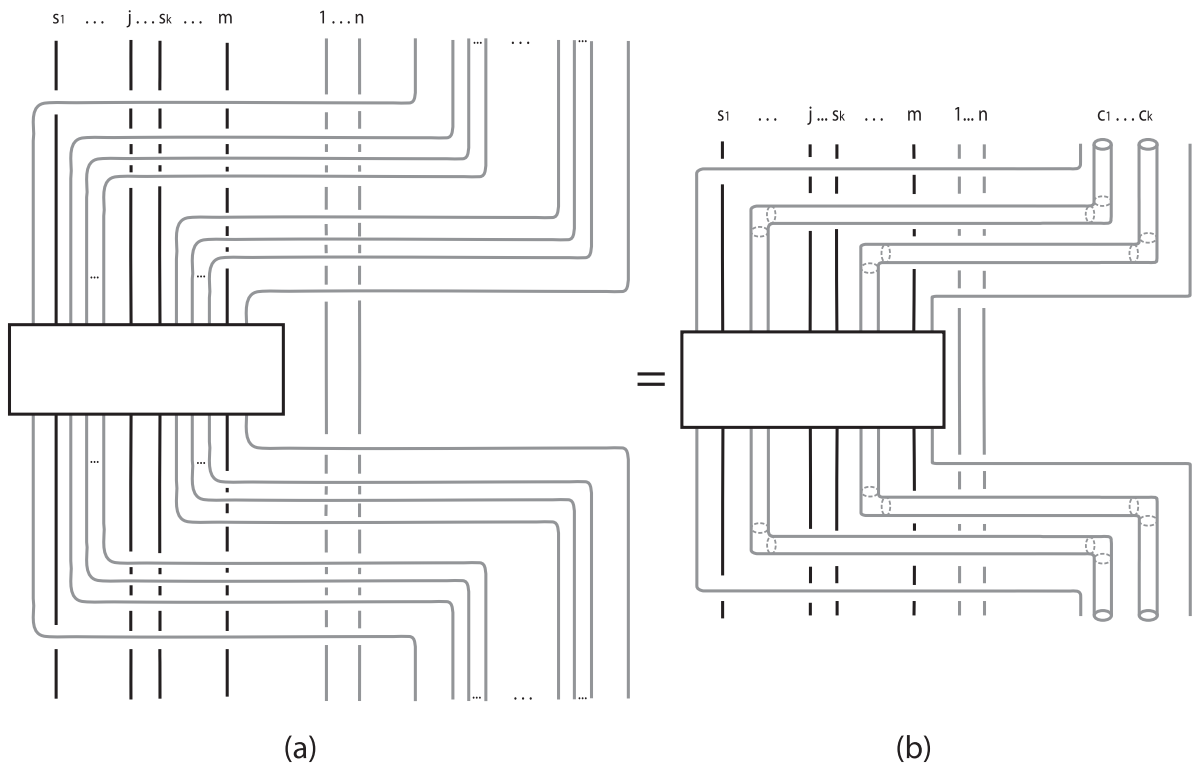}
\end{center}
\caption{ Standard parting and cabling.}
\label{stndpartcable}
\end{figure}

Recall from Section~1.3 that a geometric $L$-move can be turned to a parted $L$-move. In order to give the analogue of Theorem~\ref{partedcco} in the case of rational surgery we also need to introduce the following adaptation of a parted $\mathbb{Z}$-braid band move.

\begin{defn}  \label{partqbbm} \rm
A {\it parted $\mathbb{Q}$-braid band move} is defined to be a geometric $\mathbb{Q}$-braid band move between parted mixed braids, such that it takes place at the top part of the braid and on the right of the rightmost strand, $s_k$, of the specific surgery component, $s$, consisting of the strands $s_1, \ldots , s_k$. Moreover, the little band starts from the last strand of the moving subbraid and it moves over each moving strand and each component of the surgery braid, until it reaches the last strand of $s$, and then is followed by parting of the resulting mixed braid. See Figures~\ref{partedqbbm} and \ref{partecables}.
\end{defn}

\begin{figure}
\begin{center}
\includegraphics[width=6in]{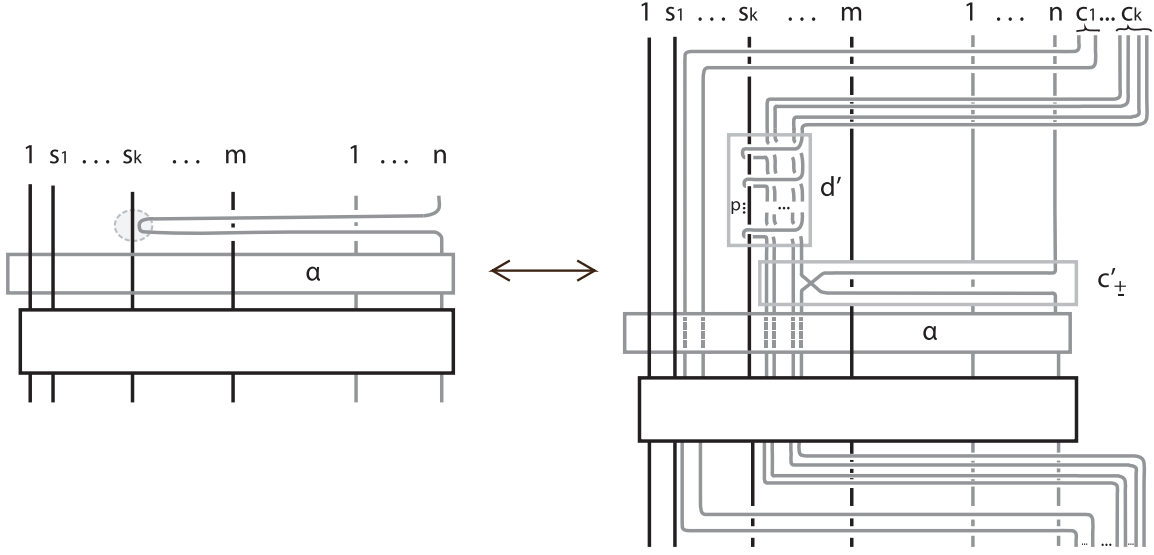}
\end{center}
\caption{ A parted $\mathbb{Q}$-braid band move.}
\label{partedqbbm}
\end{figure}

\begin{figure}
\begin{center}
\includegraphics[width=5.9in]{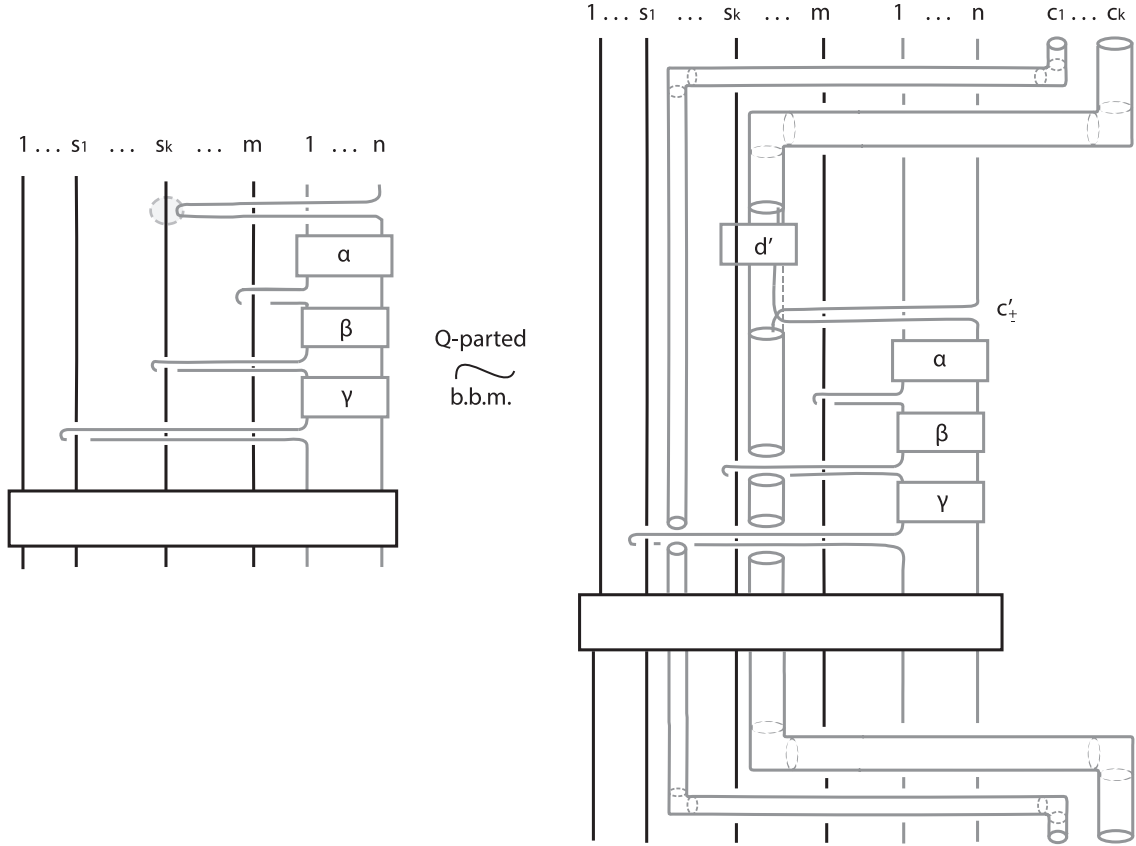}
\end{center}
\caption{ A parted $\mathbb{Q}$-braid band move using cables.}
\label{partecables}
\end{figure}

Then Theorem~\ref{geommarkovrat} restricts to the following.

\begin{thm}[Parted version of braid equivalence for $M=\chi_{_{\mathbb{Q}}} (S^3, \widehat B)$] \label{qparted} \ Two
oriented links in $M=\chi_{_{\mathbb{Q}}} (S^3, \widehat B)$ are isotopic if and only if  any two
corresponding parted mixed braids in $C_{m,\infty}$ differ by a finite sequence of parted
$L$-moves, loop  conjugations and parted $\mathbb{Q}$-braid band moves.
\end{thm}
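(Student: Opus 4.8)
The plan is to derive Theorem~\ref{qparted} from the geometric equivalence Theorem~\ref{geommarkovrat} by applying the parting procedure of Section~1.3, following verbatim the strategy used in \cite{LR2} to pass from Theorem~\ref{geommarkov} to Theorem~\ref{partedcco}, with the one new feature that the $q$-strand cables produced by a $\mathbb{Q}$-braid band move are kept track of by means of Lemma~\ref{partcable}.

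For the forward implication, let $K_1, K_2$ be isotopic oriented links in $M=\chi_{_{\mathbb{Q}}}(S^3,\widehat{B})$ and choose corresponding geometric mixed braids. By Theorem~\ref{geommarkovrat} these differ by a finite sequence of mixed braid isotopies, $L$-moves that do not touch $B$, and geometric $\mathbb{Q}$-braid band moves. I then part every geometric mixed braid occurring in this sequence, say by the standard parting, obtaining elements of $C_{m,\infty}$, and compare consecutive parted pictures. Three of the four move types are handled exactly as in \cite{LR2}: two partings of one and the same geometric mixed braid differ by loop conjugations (Lemma~2 \cite{LR2}), so the choice of parting is immaterial up to loop conjugation; a mixed braid isotopy becomes, after parting, a parted mixed braid isotopy, again up to loop conjugation; and an $L$-move, once the parting is made consistent with its label ($L_o$ parted over all strands, $L_u$ parted under), becomes a parted $L$-move (Lemma~3 \cite{LR2}). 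It remains to treat the geometric $\mathbb{Q}$-braid band move, which is the only genuinely new point.

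For that step, by Remark~\ref{Lmove}(ii) — in the form recalled just before Theorem~\ref{geommarkovrat} — we may assume the $\mathbb{Q}$-braid band move takes place at the top of the mixed braid and that all $kq$ strands it produces, together with the $(p,q)$-torus braid, lie to the right of the surgery strands $s_1,\ldots,s_k$. By Lemma~\ref{partcable}, parting the resulting mixed braid is the same as first cabling the new strands into the $k$ cables $c_1,\ldots,c_k$ and then parting with each cable treated as a single thickened strand. With the cables regarded as thickened strands, the picture near the move is, up to the $(p,q)$-torus braid wrapped around $s_k$, precisely the picture of an integral $\mathbb{Z}$-braid band move, so the argument of Lemma~5 \cite{LR2} applies: performing the move and then parting differs, up to parted $L$-moves and loop conjugations, from first pulling the little band (starting from the last moving strand) over all moving strands and over all surgery components until it reaches $s_k$ from the right, then performing the twist together with the $(p,q)$-torus braid there, and finally parting. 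The latter is exactly a parted $\mathbb{Q}$-braid band move in the sense of Definition~\ref{partqbbm} (Figures~\ref{partedqbbm} and \ref{partecables}). This establishes the forward implication.

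The converse is immediate from Theorem~\ref{geommarkovrat}: a parted mixed braid is in particular a geometric mixed braid, a parted $L$-move is an $L$-move not touching $B$, a loop conjugation is a mixed braid isotopy in $S^3\backslash \widehat{B}$, and a parted $\mathbb{Q}$-braid band move is a geometric $\mathbb{Q}$-braid band move; hence two parted mixed braids related by these moves have isotopic closures in $M$. The main obstacle is the $\mathbb{Q}$-braid band move step above: one must check carefully that throughout the parting argument of \cite{LR2} the cabled strands may legitimately be manipulated as single thickened strands — in particular that the $(p,q)$-torus braid and the non-purity of $B$ do not generate moves beyond parted $L$-moves and loop conjugations — and Lemma~\ref{partcable} is precisely what makes this legitimate; the verification amounts to inspecting the local pictures in Figures~\ref{stndpartcable}, \ref{partedqbbm} and \ref{partecables}.
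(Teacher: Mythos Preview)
Your proposal is correct and follows essentially the same approach as the paper: both use Lemma~\ref{partcable} to treat the $q$-strand cables as single thickened strands, thereby reducing to the machinery of \cite{LR2}. The paper's own proof is simply a more compressed version---it invokes Theorem~\ref{partedcco} directly (together with Lemma~9 of \cite{LR2} for the position normalization), whereas you unpack that invocation into its constituent Lemmas~2, 3, and 5 of \cite{LR2}; your expansion is accurate and arguably clearer.
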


\begin{proof}
By Lemma~\ref{partcable} the cables resulting from a geometric $\mathbb{Q}$-braid band move are treated as one strand, so we can apply Theorem~\ref{partedcco}. Moreover, by Lemma~9 in \cite{LR2} a geometric $\mathbb{Q}$-braid band move may be always assumed, up to $L$-equivalence, to take place on the right of the rightmost strand of the specific surgery component.
\end{proof}

In order to translate Theorem~\ref{qparted} into an algebraic equivalence between elements of $B_{m,\infty}$ we need the following lemmas.

\begin{lemma}[Combing Lemma, Lemma 6 \cite{LR2}] \label{combinglem}
{ \ The crossings $\Sigma_k$, $k=1,\ldots,m-1$ of the fixed subbraid $B$, and the loops $a_i$,  for
$i=1,\ldots,m$, satisfy the following {\rm `combing' relations}:
\[
\begin{array}{llcll}
 \ \  & \Sigma_k {a^{\pm 1}_k}  & = & {a^{\pm 1}_{k+1}} \Sigma_k  &    \\

 \ \  & \Sigma_k {a^{\pm 1}_{k+1}}  & = &  {a^{-1}_{k+1}} {a^{\pm 1}_k} a_{k+1}
\Sigma_k  &   \\

 \ \  & \Sigma_k {a^{\pm 1}_i}   & = &  {a^{\pm 1}_i}  \Sigma_k  & \mbox{if  \ }
 i \neq k, k+1 \\

 \ \  & {\Sigma^{-1}_k} {a^{\pm 1}_k}  & = & a_k {a^{\pm 1}_{k+1}} {a^{-1}_k}
 {\Sigma^{-1}_k}  &    \\

 \ \  & {\Sigma^{-1}_k} {a^{\pm 1}_{k+1}}  & = & {a^{\pm 1}_k}
{\Sigma^{-1}_k}  &   \\

  & {\Sigma^{-1}_k} {a^{\pm 1}_i}   & = &  {a^{\pm 1}_i}  {\Sigma^{-1}_k}
 & \mbox{if  \ }  i \neq k, k+1.  \\
 \end{array}
\]
 }
\end{lemma}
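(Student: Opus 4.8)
The plan is to verify the six relations by a local argument, after two reductions. First, each relation involves only one fixed crossing $\Sigma_k$ and one loop $a_i$ of the first moving strand; the crossings $\sigma_j$ among the moving strands carry index $\ge m+1$ in $B_{m+n}$ whereas $\Sigma_k=\sigma_k$ with $k\le m-1$, so $|k-(m+j)|\ge 2$ and these crossings commute with $\Sigma_k$, while a loop of any other moving strand is a conjugate, by a word in the $\sigma_j$'s, of some $a_i$. Hence it suffices to work inside $B_{m+1}$, the braid group on the $m$ fixed strands together with a single moving strand. Second, I would invoke the standard fact (Fadell--Neuwirth) that in $B_{m+1}$ the loops $a_1,\dots,a_m$ generate a free subgroup of rank $m$, namely $\pi_1$ of the $m$-punctured disc whose punctures are the fixed strands and whose basepoint is the endpoint of the moving strand, with $a_i$ sent to a meridian $x_i$ of the $i$-th puncture.

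For $i\ne k,k+1$ (the third and sixth relations) I would argue by disjointness of supports: drawing the excursion of $a_i$ so that it runs \emph{in front of} every strand lying between positions $m+1$ and $i$, its diagram becomes a planar arc lying entirely in front of the two strands that cross at $\Sigma_k$, so $\Sigma_k$ (and $\Sigma_k^{-1}$) slides freely past it and the factors commute. For the remaining four relations, those with $i=k$ and $i=k+1$, one uses that $\Sigma_k$ is the elementary half-twist interchanging the $k$-th and $(k+1)$-st punctures, so that conjugation by it induces on the free group the classical braid automorphism
\[
x_k\longmapsto x_{k+1},\qquad x_{k+1}\longmapsto x_{k+1}^{-1}\,x_k\,x_{k+1},\qquad x_i\longmapsto x_i\ \ (i\ne k,k+1),
\]
while conjugation by $\Sigma_k^{-1}$ induces its inverse, $x_k\mapsto x_k\,x_{k+1}\,x_k^{-1}$ and $x_{k+1}\mapsto x_k$. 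Rewriting these assignments in terms of the $a_i$'s, and using that $a_i^{-1}$ maps to the inverse of the image of $a_i$, yields precisely $\Sigma_k a_k^{\pm1}=a_{k+1}^{\pm1}\Sigma_k$, $\Sigma_k a_{k+1}^{\pm1}=a_{k+1}^{-1}a_k^{\pm1}a_{k+1}\Sigma_k$, and their $\Sigma_k^{-1}$ counterparts.

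Equivalently, and in the diagrammatic spirit of the paper, each of the four non-trivial relations can be obtained by stacking $\Sigma_k$ just below the loop and isotoping it upward through the loop, recording how the two feet of the loop are dragged across the crossing; this is essentially the content of Figure~\ref{combing}. The step I expect to require the most care is $i=k+1$: as $\Sigma_k$ moves up through the loop around the strand in position $k+1$, that loop is carried onto the strand now in position $k$, but its return arc is forced to wind once around the displaced strand, which is the source of the conjugating factors $a_{k+1}^{-1}(\,\cdot\,)a_{k+1}$. Pinning down the handedness of this winding, and keeping the over/under conventions for the loops and for $\Sigma_k$ consistent with the signs in the statement, is the only genuinely delicate point; the rest is bookkeeping. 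The full computation is carried out in Lemma~6 of \cite{LR2}.
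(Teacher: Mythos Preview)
Your argument is correct. Note, however, that the present paper does not actually prove this lemma: it is quoted verbatim from \cite{LR2} as background, with Figure~\ref{combing} serving only as an illustration of the combing process. In \cite{LR2} the proof is the direct diagrammatic verification you describe in your second paragraph---stacking $\Sigma_k^{\pm1}$ beneath each loop and sliding it upward by braid isotopy, reading off the resulting word in the $a_i$'s. Your first route, via the Fadell--Neuwirth identification of the subgroup $\langle a_1,\ldots,a_m\rangle\subset B_{m+1}$ with $\pi_1$ of the $m$-punctured disc and the standard Artin action of $\Sigma_k$ on that free group, is a legitimate and somewhat more conceptual alternative: it explains \emph{why} the conjugating factors $a_{k+1}^{\mp1}(\cdot)a_{k+1}^{\pm1}$ appear (they are forced by the half-twist automorphism), whereas the diagrammatic proof simply exhibits them. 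Both approaches require the same care with sign and over/under conventions that you flag; your check that $\Sigma_k a_k\Sigma_k^{-1}=a_{k+1}$ and $\Sigma_k a_{k+1}\Sigma_k^{-1}=a_{k+1}^{-1}a_k a_{k+1}$, together with the inverse automorphism, matches the stated relations exactly.
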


\noindent \textbf{Notation:} We set $\lambda_{k,r}:=\sigma_k \sigma_{k+1} \ldots \sigma_{r-1} \sigma_r$, for $k<r$ and $\lambda_{k,r}:=\sigma_k \sigma_{k-1} \ldots \sigma_{r+1} \sigma_r$, for $r<k$.
We note that $\lambda_{i,i}:=\sigma_i$. Also, by convention we set $\lambda_{0,i}=\lambda_{i,0}:=1$.

Then we have the following:

\begin{lemma} \label{cableloop}
A positive looping between a $q$-strand cable and the $j^{th}$ fixed strand of the fixed subbraid $B$ has the algebraic expression:
$$\prod_{i=0}^{q-1} \lambda_{i,1} a_j\lambda_{i,1}^{-1}=\prod_{i=0}^{q-1} \lambda_{1,(q-1)-i}^{-1} a_j\lambda_{1,(q-1)-i} \ ,$$
\noindent while a negative looping has the algebraic expression:
$$\prod_{i=0}^{q-1} \lambda_{1,i}^{-1} a_j^{-1} \lambda_{1,i}=\prod_{i=0}^{q-1} \lambda_{(q-1)-i,1} a_j^{-1} \lambda_{(q-1)-i,1}^{-1} \ .$$
\end{lemma}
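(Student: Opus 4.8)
The plan is to put the cable--looping into a normal form by a geometric isotopy and then transcribe the resulting parted mixed braid into a word in the generators of $B_{m,n}$ in two different ways. First I would fix the standard position: after the $\mathbb{Q}$-braid band move and a standard parting (Lemma~\ref{partcable} and Figure~\ref{stndpartcable}), the $q$ strands of the cable occupy the first $q$ positions of the moving subbraid, and ``a positive looping with the $j^{th}$ fixed strand'' means precisely that these $q$ mutually parallel strands encircle the $j^{th}$ strand of $B$ once, all on the same side of it and passing \emph{over} every fixed strand lying in between. The geometric key point is that such a bunch of $q$ parallel encircling arcs can be isotoped rel endpoints, inside the parted mixed braid, so that the strands pierce the disc spanned by the loop one at a time; this is the ``standard parting of a $q$-strand cable loop'' drawn in Figures~\ref{cablesa} and \ref{cablesa2}.

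Next I would read off the contribution of the $r^{th}$ elementary piercing. To bring the strand currently in position $r$ to the position where the loop generator is defined (position $1$) one slides it over the strands occupying positions $r-1,\dots,1$, i.e.\ one inserts $\lambda_{r-1,1}=\sigma_{r-1}\cdots\sigma_1$; the encircling of the $j^{th}$ fixed strand is then exactly the generator $a_j$; and finally the strand is returned by $\lambda_{r-1,1}^{-1}$. Concatenating these for $r=1,\dots,q$, i.e.\ for $i=r-1=0,\dots,q-1$, yields the first expression $\prod_{i=0}^{q-1}\lambda_{i,1}\,a_j\,\lambda_{i,1}^{-1}$. The very same loop is, however, equally described from the ``inside out'': let the innermost piercing be performed by the strand in position $q$ after pushing it \emph{under} the strands in positions $q-1,\dots,1$, that is by $\lambda_{1,q-1}^{-1}$, the next by the strand in position $q-1$ pushed under by $\lambda_{1,q-2}^{-1}$, and so on down to the strand in position $1$; this reading gives the second expression $\prod_{i=0}^{q-1}\lambda_{1,(q-1)-i}^{-1}\,a_j\,\lambda_{1,(q-1)-i}$. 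Since both words faithfully transcribe one and the same parted mixed braid, they represent the same element; as an internal check, for $q=2$ the asserted equality is precisely the defining relation $a_j\sigma_1 a_j\sigma_1=\sigma_1 a_j\sigma_1 a_j$ of $B_{m,n}$ (Eq.~\ref{B}), and for general $q$ it follows from this relation together with the braid relations and the relations $a_j\sigma_k=\sigma_k a_j$, $k\ge 2$.

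The negative-looping formulas then follow by symmetry. A negative looping is the mirror image, through the plane of the braid, of a positive looping; reflecting all the figures above replaces $a_j$ by $a_j^{-1}$ and each $\sigma_k$ by $\sigma_k^{-1}$, hence $\lambda_{i,1}$ by $\lambda_{1,i}^{-1}$, and so turns the two positive expressions into $\prod_{i=0}^{q-1}\lambda_{1,i}^{-1}\,a_j^{-1}\,\lambda_{1,i}$ and $\prod_{i=0}^{q-1}\lambda_{(q-1)-i,1}\,a_j^{-1}\,\lambda_{(q-1)-i,1}^{-1}$, which are exactly the claimed expressions (the second is also, after re-indexing, the inverse of the first positive expression, as it must be since a negative loop undoes a positive one). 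I expect the only real work to be bookkeeping — keeping the over/under conventions of the standard parting matched to the signs of the $\lambda$'s, and making sure the order of the $q$ elementary piercings in each build-up is exactly the product order written in the statement; once the figures are drawn consistently the two identities can simply be read off.
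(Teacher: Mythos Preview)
Your approach is essentially the paper's own: uncable the thickened loop into its $q$ constituent strands, isotope so that the strands loop around the $j^{th}$ fixed strand one at a time, and read off the word in $B_{m,n}$ in two different orders to obtain the two claimed expressions (the paper does this via Figures~\ref{lemma2} and~\ref{lemma2b} rather than Figures~\ref{cablesa}, \ref{cablesa2}, which concern a different local parting). The only minor deviation is that you obtain the negative-looping formulas by a mirror-symmetry argument, whereas the paper simply repeats the pictorial argument with a separate figure; your $q=2$ consistency check against the relation $a_j\sigma_1 a_j\sigma_1=\sigma_1 a_j\sigma_1 a_j$ is a nice addition not present in the paper.
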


\begin{proof}
We start with Figure~\ref{lemma2}(a) where a positive looping between a $q$-strand cable and a fixed stand of the mixed braid is shown. In Figure~\ref{lemma2}(b) the cable is replaced by the $q$ strands according to Definition~\ref{cable}. Then, using mixed braid isotopy, we end up with Figure~\ref{lemma2}(c), top, whereby we can read directly the algebraic expression $\prod_{i=0}^{q-1} \lambda_{i,1} a_j \lambda_{i,1}^{-1}$. The second algebraic expression comes from the bottom illustration of Figure~\ref{lemma2}. Similarly, in Figure~\ref{lemma2b} we illustrate the case where a negative looping between a $q$-strand cable and a fixed strand of the mixed braid occurs.
\end{proof}

\begin{figure}
\begin{center}
\includegraphics[width=5.8in]{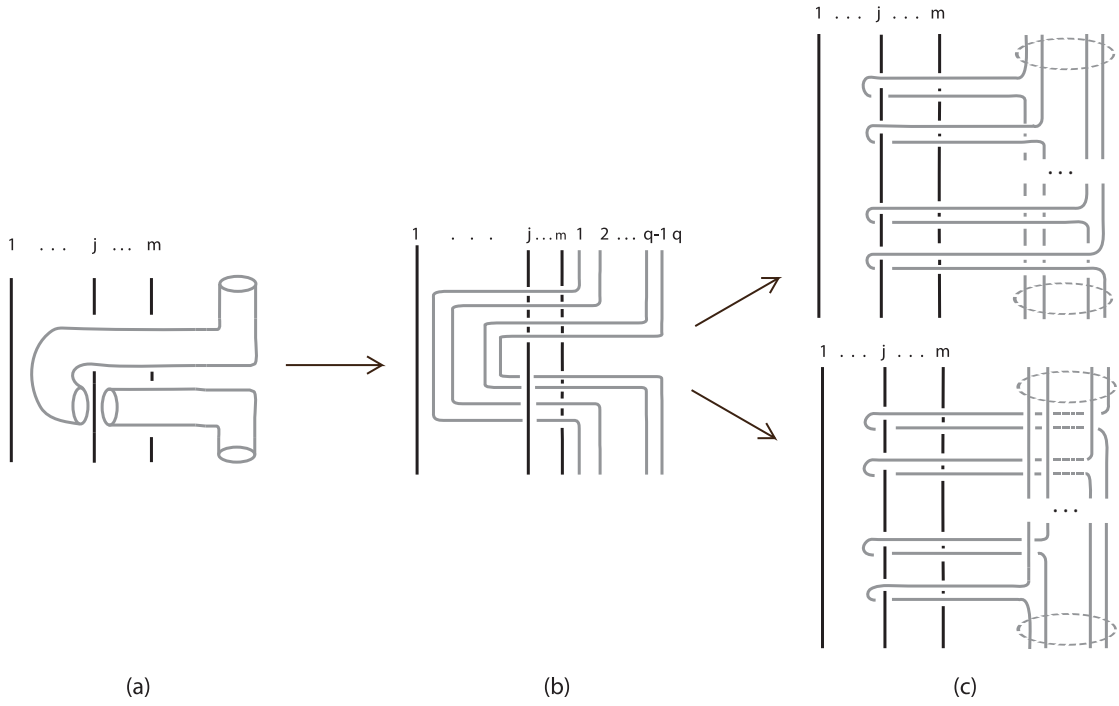}
\end{center}
\caption{ A positive looping between a cable and a fixed strand.}
\label{lemma2}
\end{figure}

\begin{figure}
\begin{center}
\includegraphics[width=5.8in]{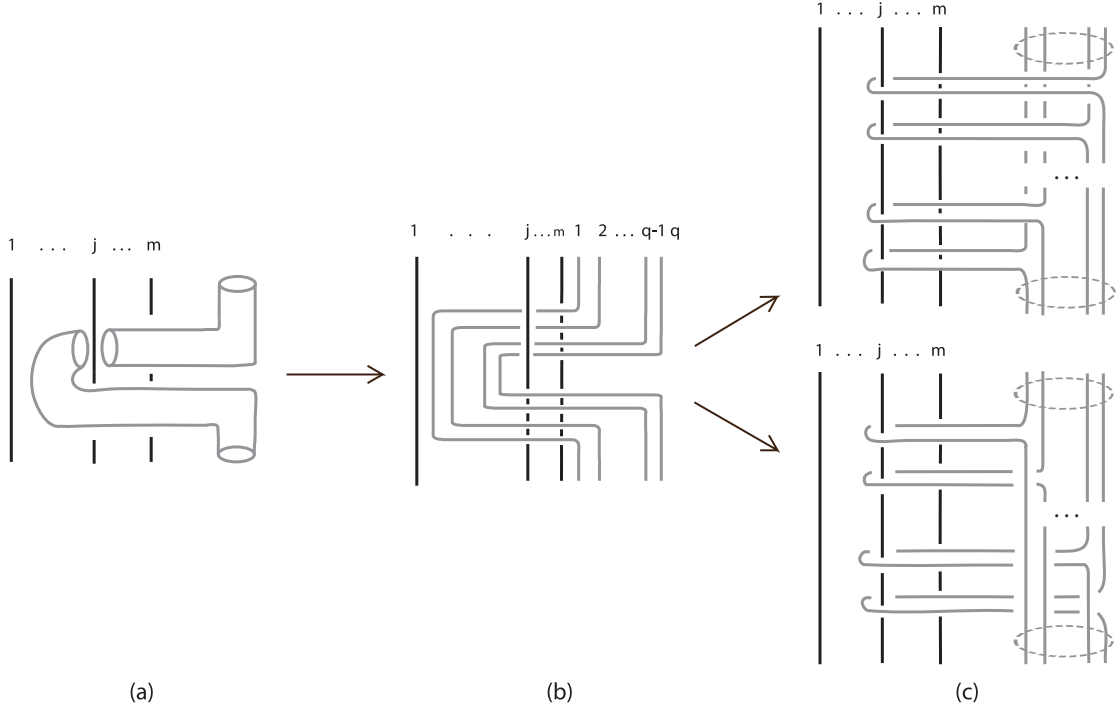}
\end{center}
\caption{ A negative looping between a cable and a fixed strand.}
\label{lemma2b}
\end{figure}

\begin{lemma} \label{cablecomb}
Cabling and combing commute. That is, treating a $q$-strand cable as a thickened moving strand and combing it through the fixed subbraid $B$, the result is equivalent to combing one by one each strand of the cable.
\end{lemma}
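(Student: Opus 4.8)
The plan is to mimic the proof of Lemma~\ref{partcable} (parting and cabling commute), replacing "parting" by "combing" throughout. The key observation is the one already recorded in Section~1.3: combing only affects the loop generators $a_i$, since the crossings $\Sigma_k$ of the fixed subbraid commute with all braiding generators $\sigma_j$ of the moving part. Hence, when a $\mathbb{Q}$-braid band move has been performed and its $q$ new strands have been parted, combing the whole configuration through $B$ reduces to pushing each crossing $\Sigma_k^{\pm1}$ of $B$ downward past the loops that the new strands make around the fixed strands, i.e.\ to applying the combing relations of Lemma~\ref{combinglem} to those loops.

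First I would set up the same notation as in Lemma~\ref{partcable}: let $B\bigcup\beta$ be a geometric mixed braid on $m+n$ strands, let a $\mathbb{Q}$-braid band move be performed on the surgery component $s$ with strands $s_1,\ldots,s_k$, and let $c_1,\ldots,c_k$ be the resulting $q$-strand cables running parallel to $s_1,\ldots,s_k$. By Lemma~\ref{partcable} we may assume everything is already parted, and by Lemma~\ref{cableloop} the looping of a cable $c_j$ around a fixed strand of $B$ is the product $\prod_{i=0}^{q-1}\lambda_{i,1}\,a_j\,\lambda_{i,1}^{-1}$ (or its negative analogue), where each factor $\lambda_{i,1}a_j\lambda_{i,1}^{-1}$ is precisely the looping of the $(i{+}1)$-st individual strand of the cable around that fixed strand. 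The content of the lemma is then that combing $B$ through the product equals the product of the combings of the individual factors.

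Next I would carry out the comparison in two ways and check they agree. On one side, treat the $q$-strand cable as a single thickened moving strand: its looping around the $j$-th fixed strand is, by Lemma~\ref{cableloop}, a word in the $\sigma$'s and a single $a_j$ conjugated appropriately; combing a crossing $\Sigma_k^{\pm1}$ of $B$ downward past it uses the relations of Lemma~\ref{combinglem}, which rewrite $a_j$ (or $a_{j+1}$) and leave the $\sigma$-parts untouched because $\Sigma_k$ commutes with every $\sigma_\ell$. On the other side, comb $\Sigma_k^{\pm1}$ past the strands of the cable one at a time; since the $q$ strands are mutually parallel and each carries a loop of the same sign around the $j$-th fixed strand, passing $\Sigma_k^{\pm1}$ past the whole cable is the composition of $q$ identical elementary combings, the $\ell$-th of which acts on the $\ell$-th conjugated copy of $a_j$ exactly as the single-strand combing relation prescribes. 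Geometrically this is just the statement that the crossing of two fixed strands, being disjoint from the plane of the cable, slides past the $q$ parallel arcs of the cable simultaneously and independently — I would record this in a figure (Figures~\ref{combcable1} and~\ref{combcableproof2}, as announced in the introduction), and then read off that the two resulting words coincide factor by factor.

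The main obstacle, and the step that needs care, is bookkeeping of the conjugating $\lambda$-factors under the combing relations: when $\Sigma_k$ turns $a_j$ into $a_{j+1}$, or $a_{j+1}$ into $a_{j+1}^{-1}a_j a_{j+1}$, one must verify that conjugating first by $\lambda_{i,1}$ and then combing gives the same word as combing first and then conjugating by $\lambda_{i,1}$ — that is, that the combing relations of Lemma~\ref{combinglem}, which involve only the loop generators and the $\Sigma_k$'s, are unaffected by conjugation by the $\sigma_\ell$'s, which follows from the defining relations $a_i\sigma_\ell=\sigma_\ell a_i$ for $\ell\ge 2$ in the presentation~\eqref{B} together with $\Sigma_k\sigma_\ell=\sigma_\ell\Sigma_k$. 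Once this commutation is isolated, the equality of the two words is immediate and the proof closes; everything else is the routine geometric argument of Lemma~\ref{partcable} transcribed to combing.
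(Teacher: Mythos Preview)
Your outline matches the paper's case-by-case analysis of the combing relations, but it glosses over the one genuinely nontrivial case and, in doing so, conflates the two sides you are meant to compare. When the cable loops around the $(k{+}1)$-st fixed strand and a positive $\Sigma_k$ is combed past it, combing the cable \emph{as a single thickened strand} means applying the relation $\Sigma_k\,\mathbf{a}_{k+1}=\mathbf{a}_{k+1}^{-1}\mathbf{a}_k\mathbf{a}_{k+1}\,\Sigma_k$ to the cable loop $\mathbf{a}_j$ as a whole, and only afterward uncabling each $\mathbf{a}_j$ into the product $A_j=\prod_{i}\lambda_{i,1}a_j\lambda_{i,1}^{-1}$ of Lemma~\ref{cableloop}; the result is $A_{k+1}^{-1}A_kA_{k+1}$. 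Combing strand by strand, on the other hand, pushes $\Sigma_k$ past each factor $\lambda_{i,1}a_{k+1}\lambda_{i,1}^{-1}$ separately, yielding $\prod_i\lambda_{i,1}(a_{k+1}^{-1}a_ka_{k+1})\lambda_{i,1}^{-1}$. Your description of the ``cable'' side (``rewrite $a_j$\ldots\ and leave the $\sigma$-parts untouched'') is already the strand-by-strand computation, so as written you are comparing one side with itself.

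The paper flags exactly this case as the hard one (``the resulting algebraic expressions are not directly comparable'') and proves the equality $A_{k+1}^{-1}A_kA_{k+1}=\prod_i\lambda_{i,1}(a_{k+1}^{-1}a_ka_{k+1})\lambda_{i,1}^{-1}$ by induction on $q$. The base case $q=2$ is an explicit computation that reduces, after several manipulations, to the defining relation $a_j\sigma_1 a_j\sigma_1=\sigma_1 a_j\sigma_1 a_j$ of $B_{m,n}$. Your appeal to $a_i\sigma_\ell=\sigma_\ell a_i$ for $\ell\ge 2$ cannot close this gap: every conjugator $\lambda_{i,1}$ contains $\sigma_1$, and $a_j$ does not commute with $\sigma_1$, so the relation actually needed is precisely the quartic one the paper isolates.
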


\begin{proof}
According to the Combing Lemma we have to consider all cases between looping and crossings of the subbraid $B$. We will only examine the four cases illustrated in Figure~\ref{combing} as representative cases. All others are completely analogous.
The first case is illustrated in Figure~\ref{combcable1}, where a positive looping between the cable and the $k^{th}$ fixed strand of $B$ is being considered and the crossing of the fixed strands is positive. For a negative looping the proof is similar.

\begin{figure}
\begin{center}
\includegraphics[width=5.8in]{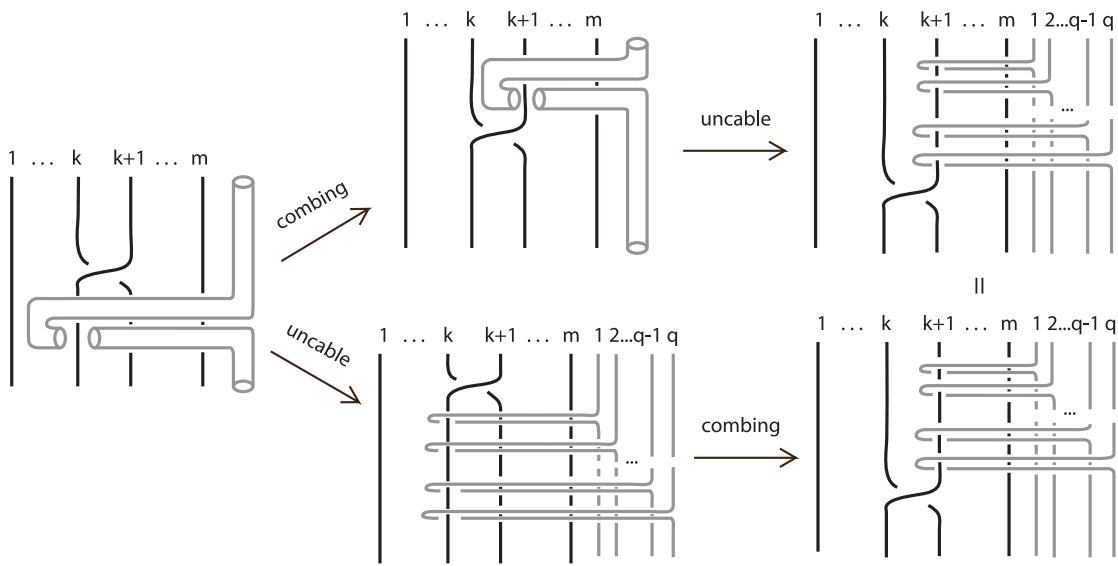}
\end{center}
\caption{ Combing and cabling commute: Proof of Case 1.}
\label{combcable1}
\end{figure}

We now consider the case illustrated in Figure~\ref{combcable2}, where a positive looping between the cable and the $(k+1)^{th}$ fixed strand of $B$ is being considered, and the crossing in $B$ is positive. We shall prove this case by induction on the number of strands that belong to the cable, since, as we can see from Figure~\ref{combcable2}, the resulting algebraic expressions are not directly comparable.

\begin{figure}
\begin{center}
\includegraphics[width=5.8in]{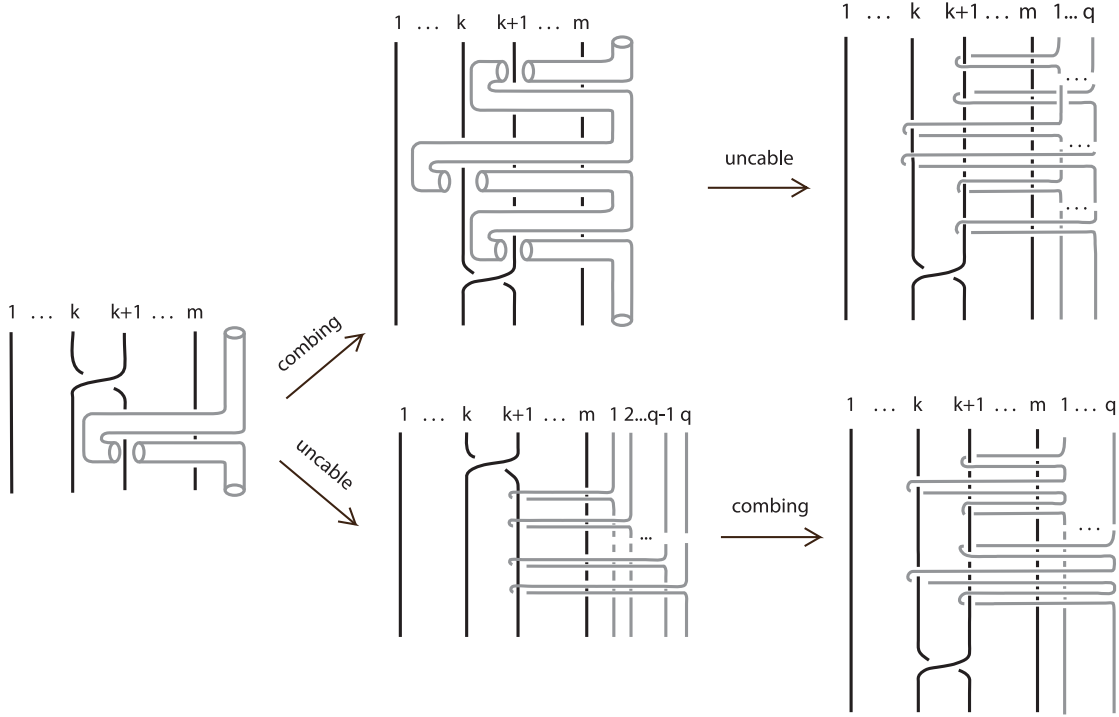}
\end{center}
\caption{ Combing and cabling commute: Case 2. }
\label{combcable2}
\end{figure}

The case where the cable consists of one strand is trivial. For a two-strand cable, combing the cable first and then uncabling (see top part part of Figure~\ref{2sccomb}) results in the algebraic expression:
$$
\alpha_2^{-1}\ (\sigma_1^{-1} \alpha_2^{-1} \sigma_1)\ \alpha_1\ (\sigma_1^{-1} \alpha_1 \sigma_1)\ \alpha_2\ (\sigma_1^{-1} \alpha_2 \sigma_1),
$$
while uncabling first and then combing (bottom part of Figure~\ref{2sccomb}) results in the algebraic expression:
$$(\alpha_2^{-1}\alpha_1\alpha_2)\ (\sigma_1^{-1} \alpha_2^{-1}\alpha_1\alpha_2 \sigma_1).$$

\begin{figure}
\begin{center}
\includegraphics[width=5.8in]{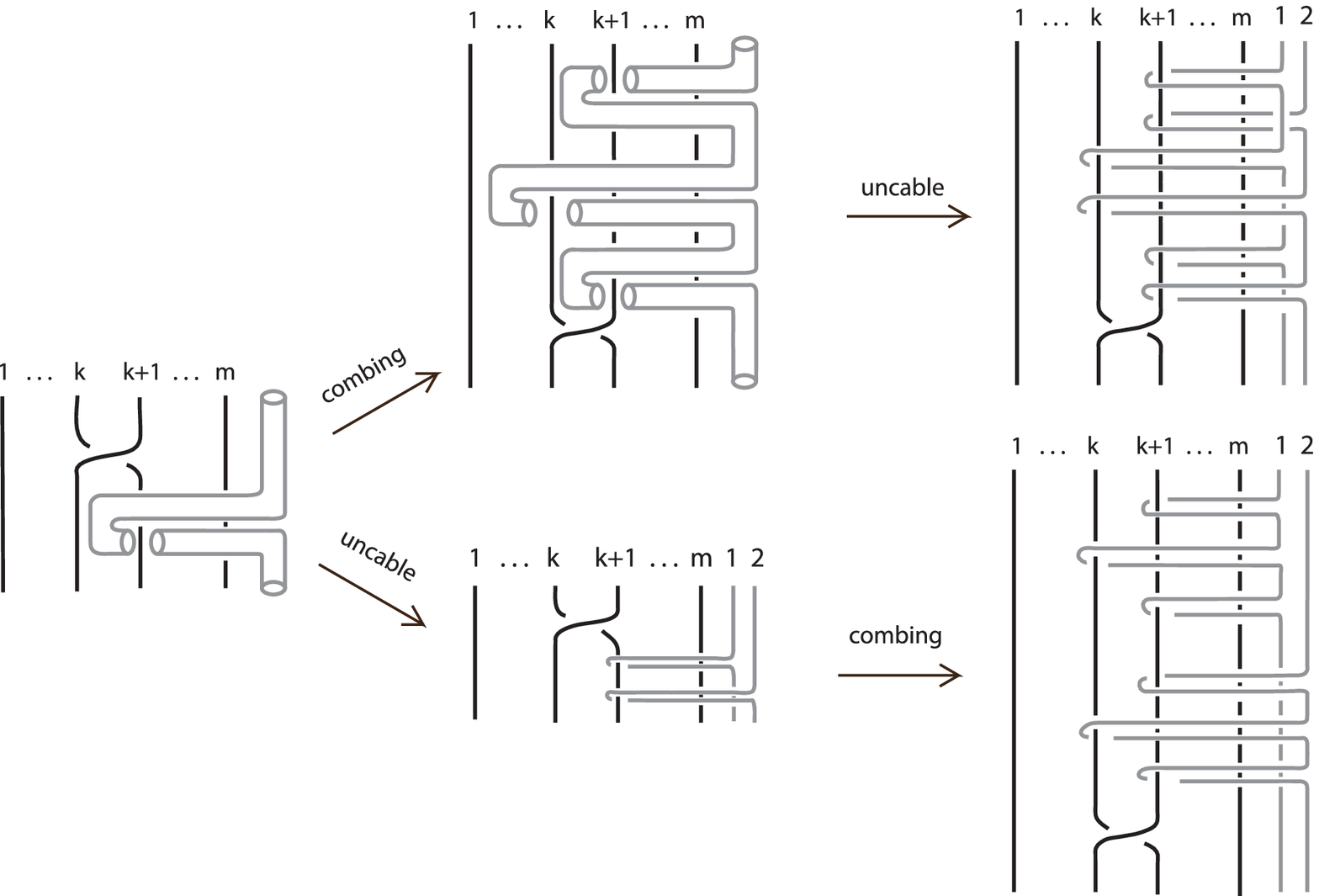}
\end{center}
\caption{ Combing a $2$-strand cable: Case 2. }
\label{2sccomb}
\end{figure}

We show below that these algebraic expressions are equal, whereby we have underlined expressions which are crucial for the next step. Indeed:


$$
\begin{array}{rclr}
\underline{\alpha_2^{-1}} (\sigma_1^{-1} \alpha_2^{-1} \sigma_1) \alpha_1 (\sigma_1 \alpha_1 \sigma_1^{-1}) \alpha_2 (\sigma_1 \underline{\alpha_2 \sigma_1^{-1}}) & = & (\underline{\alpha_2^{-1}}\alpha_1\alpha_2)(\sigma_1 \alpha_2^{-1}\alpha_1\underline{\alpha_2 \sigma_1^{-1}}) & \Leftrightarrow \\
(\sigma_1^{-1} \alpha_2^{-1} \sigma_1) \alpha_1 (\underline{\sigma_1 \alpha_1 \sigma_1^{-1}) \alpha_2} (\sigma_1)& = & (\alpha_1\alpha_2)(\sigma_1 \alpha_2^{-1}\alpha_1) & \Leftrightarrow \\
\sigma_1^{-1} \alpha_2^{-1} \sigma_1 \alpha_1  \alpha_2 \sigma_1 \underline{\alpha_1\sigma_1^{-1}\sigma_1}& = & \alpha_1\alpha_2\sigma_1 \alpha_2^{-1}\underline{\alpha_1}&\Leftrightarrow \\
\sigma_1^{-1} \underline{\alpha_2^{-1} \sigma_1 \alpha_1 (\sigma_1^{-1}} \sigma_1) \alpha_2 \sigma_1 & = & \alpha_1\alpha_2\sigma_1 \alpha_2^{-1}&\Leftrightarrow \\
\underline{\sigma_1^{-1} \sigma_1 \alpha_1} \sigma_1^{-1} \alpha_2^{-1} \sigma_1 \alpha_2 \sigma_1  & = & \underline{\alpha_1} \alpha_2\sigma_1 \alpha_2^{-1}&\Leftrightarrow \\
\underline{\sigma_1^{-1} \alpha_2^{-1}} \sigma_1 \alpha_2 \sigma_1  & = &  \alpha_2\sigma_1 \underline{\alpha_2^{-1}}&\Leftrightarrow \\
\sigma_1 \alpha_2  \sigma_1 \alpha_2  & = &  \alpha_2\sigma_1 \alpha_2 \sigma_1 \\
\end{array}
$$
\noindent We ended up with one of the defining relations of the  mixed braid group $B_{m,n}$, recall (\ref{B}).

We now consider a $(q+1)$-strand cable and we let the first $q$ strands form a $q$-strand subcable. We first comb the $q$-strand cable and then the $(q+1)^{st}$ strand and the result follows by applying the case of a 2-strand cable and the induction hypothesis for the $q$-strand cable (see Figure~\ref{combcableproof2}).
\end{proof}

\begin{figure}
\begin{center}
\includegraphics[width=6in]{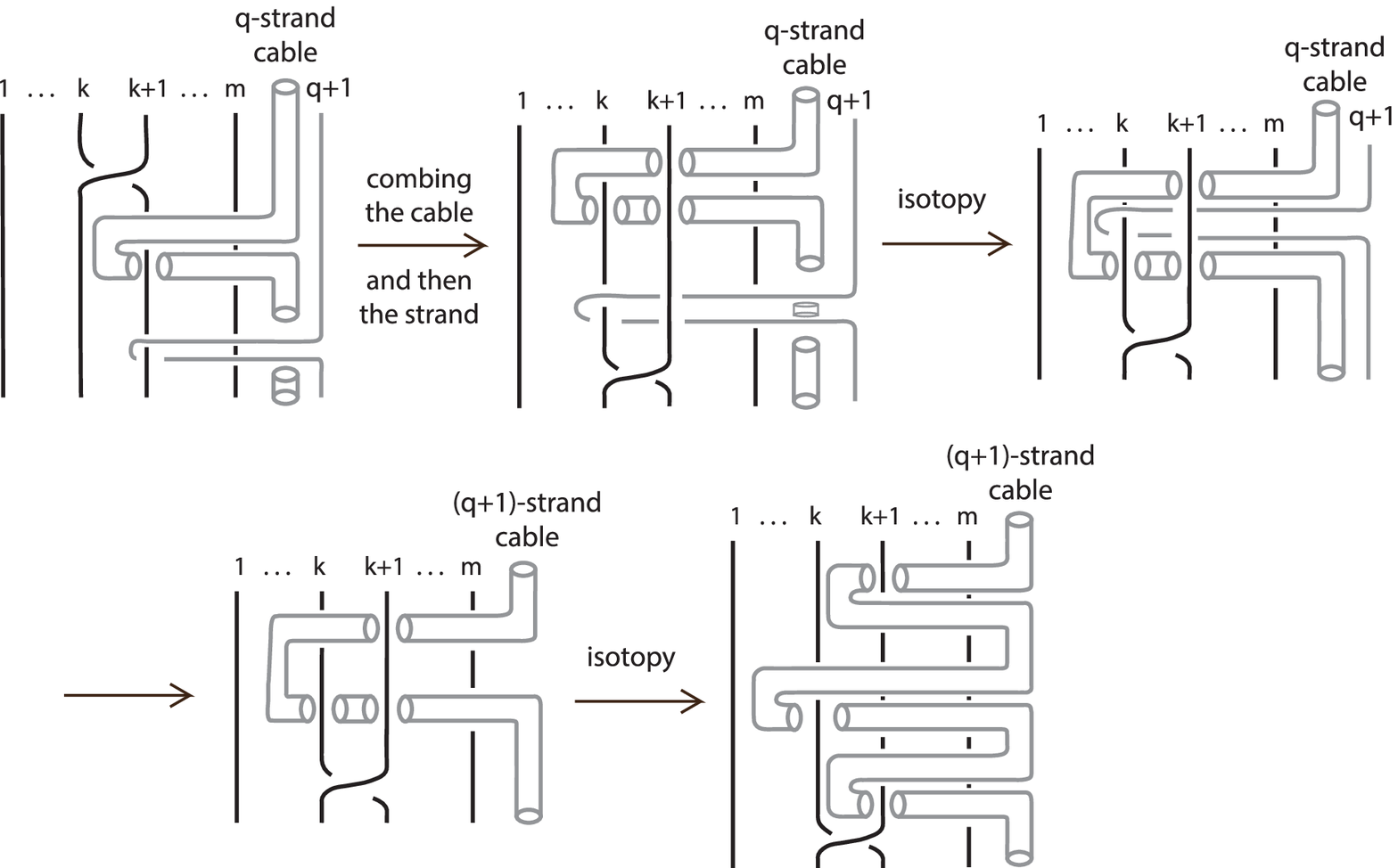}
\end{center}
\caption{ Combing and cabling commute: Proof of Case 2.}
\label{combcableproof2}
\end{figure}

Let now $B\bigcup \beta$ be a parted mixed braid and let a parted $\mathbb{Q}$-braid band move be performed on the last strand, $s_k$, of a surgery component consisting of the strands $s_1, \ldots , s_k$. Recall Figures~\ref{partedqbbm} and \ref{partecables}. In order to give an algebraic expression for the parted $\mathbb{Q}$-braid band move, we part locally the subbraids $d^{\prime}$ and $c_{\pm}^{\prime}$ and the loop generators $a _i$, $i=1, \ldots, m$, and we use mixed braid isotopy in order to transform $d^{\prime}$ into $d$ and $c_{\pm}^{\prime}$ into $c_{\pm}$. See Figures~\ref{cables2}, \ref{cablesc}, \ref{cablesa} and \ref{cablesa2}.
Then, $d$ has the algebraic expression:
\begin{equation}
d\ =\ [\ \lambda_{n+kq-1,n+(k-1)q+1}\ \lambda_{n+1,n+(k-1)q}^{-1}\ \lambda_{n,1}\ a_{s_k}\ \lambda_{n,1}^{-1}\ \lambda_{n+1,n+(k-1)q}^{-1}\ ]^p
\end{equation}
and $c_{\pm}$ has the algebraic expression:
\begin{equation}
c_{\pm}\ =\ \lambda_{n,n+kq-2}\ \sigma_{n+kq-1}^{\pm1}\ \lambda_{n,n+kq-2}^{-1}.
\end{equation}

\begin{figure}
 \begin{center}
\includegraphics[width=4.7in]{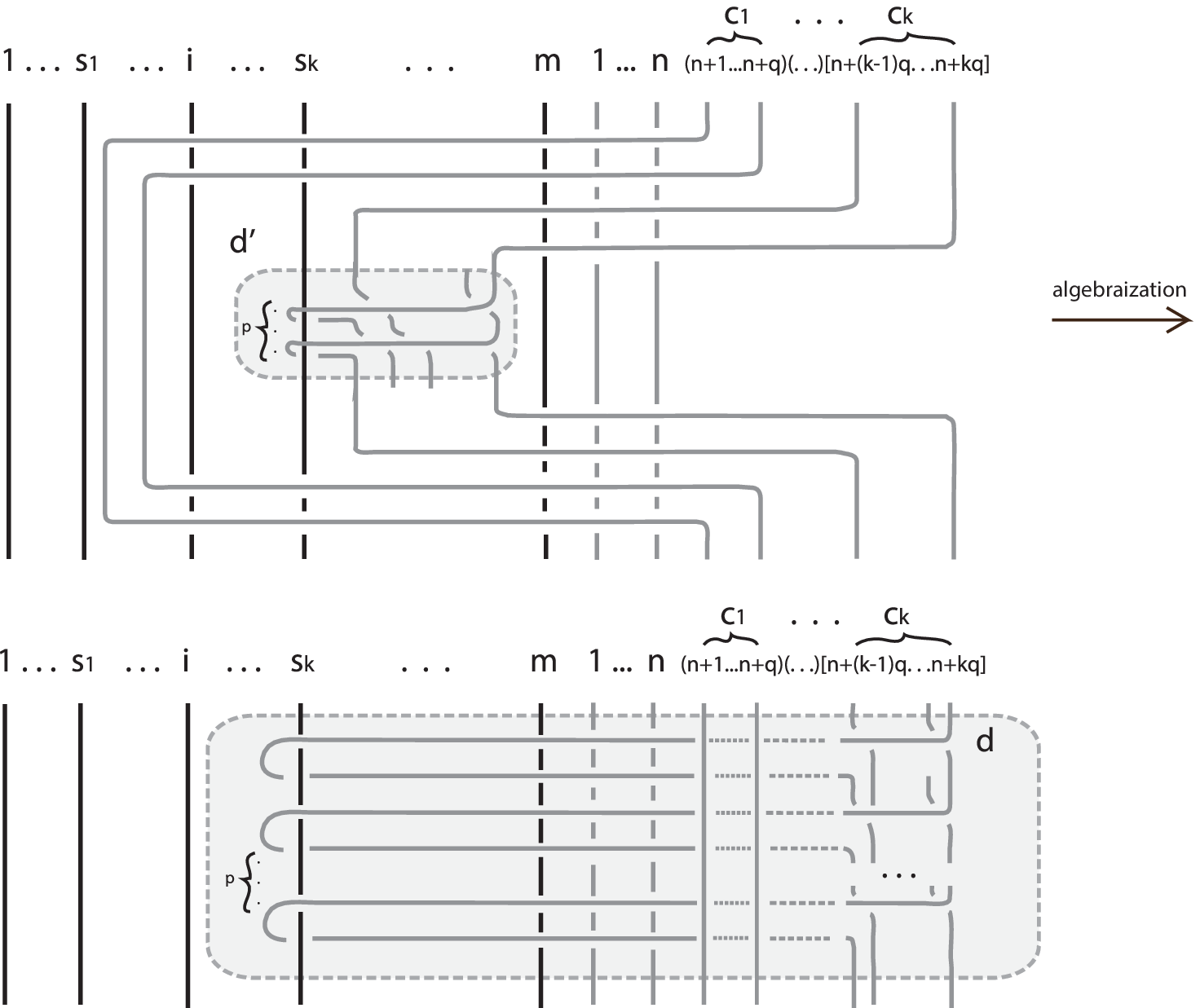}
\end{center}
\caption{The parting of a $\mathbb{Q}$-braid band move is an algebraic $\mathbb{Q}$-braid band move. }
\label{cables2}
\end{figure}

\begin{figure}
\begin{center}
\includegraphics[width=3.7in]{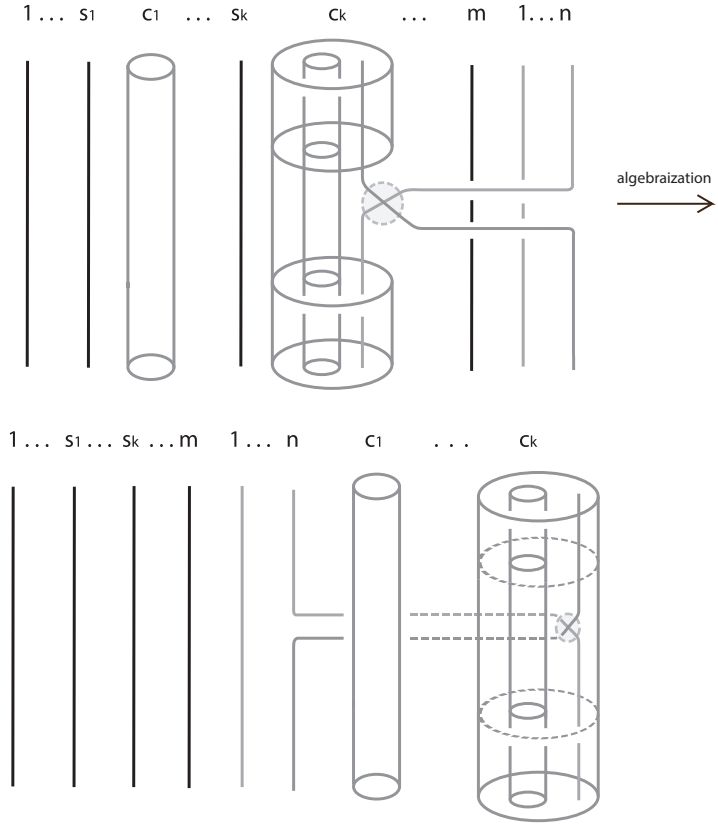}
\end{center}
\caption{Algebraization of the crossing part $c^{\prime}_{\pm}$ of the mixed braid to $c_{\pm}$ after a $\mathbb{Q}$-braid band move is performed. }
\label{cablesc}
\end{figure}

\begin{figure}
\begin{center}
\includegraphics[width=5.8in]{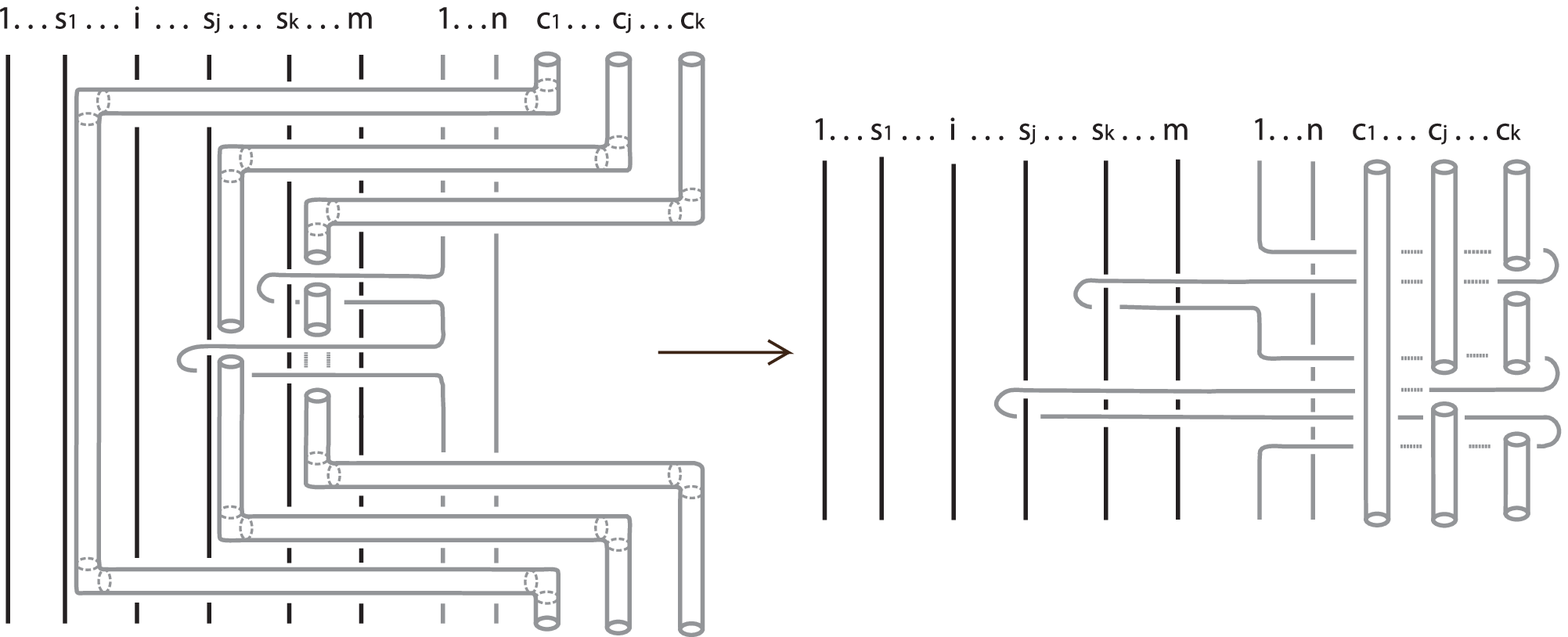}
\end{center}
\caption{Algebraization of the loop generators $a_j$ for $j \in \{s_1, \ldots, s_k \}$ after a $\mathbb{Q}$-braid band move is performed. }
\label{cablesa}
\end{figure}

\begin{figure}
\begin{center}
\includegraphics[width=6in]{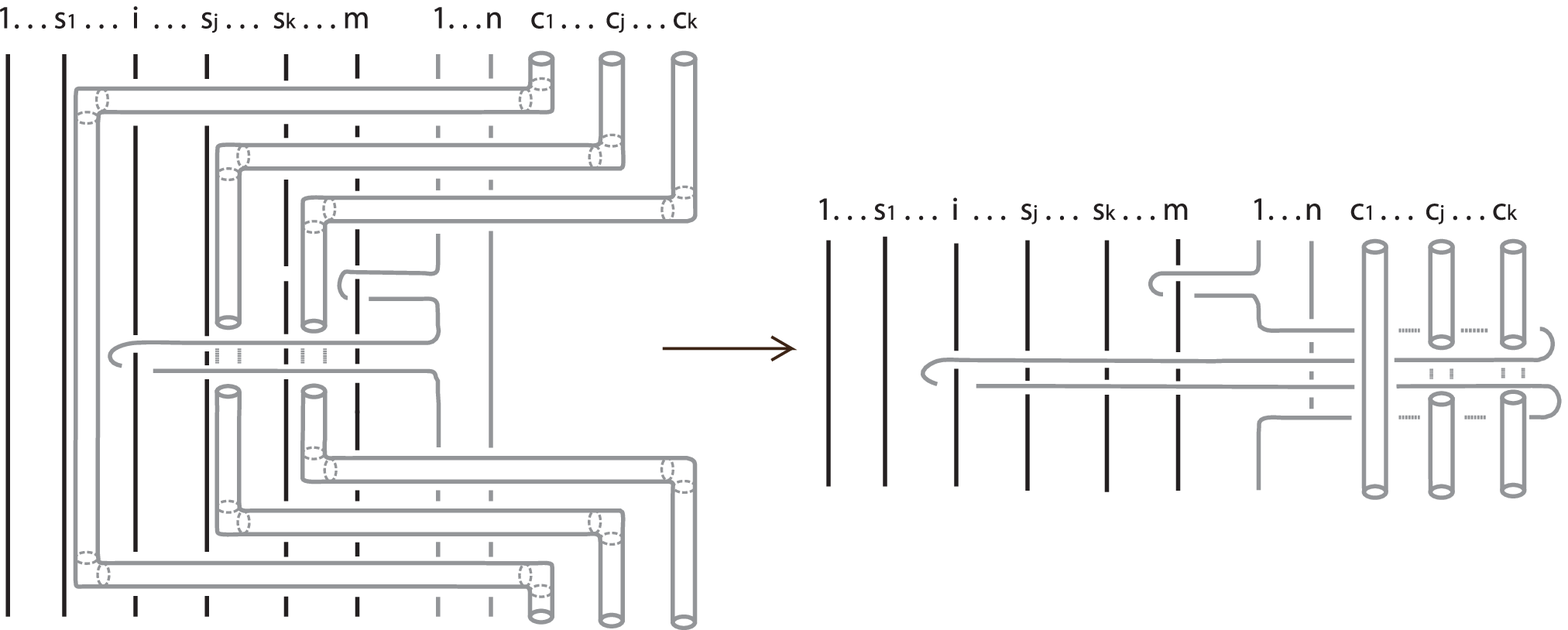}
\end{center}
\caption{Algebraization of the loop generators $a_j$ for $j \notin \{s_1, \ldots, s_k \}$ after a $\mathbb{Q}$-braid band move is performed. }
\label{cablesa2}
\end{figure}

We are now in the position to give the definition of an algebraic $\mathbb{Q}$-braid band move.

\begin{defn} \label{algbm}
\rm
(i) An {\it algebraic $\mathbb{Q}$-braid band move} is defined to be a parted $\mathbb{Q}$-braid band move between elements of $B_{n,\infty}$ and it has the following algebraic expression:
$$
\beta \sim d \ c_{\pm} \ \beta^{\prime},
$$

\noindent where $\beta^{\prime}$ is the algebraic mixed braid $\beta$ with the substitutions:
$$
\begin{array}{ccl}
{a_i}^{\pm 1} & \longleftrightarrow & {a_i}^{\pm 1}, \mbox{ \ for} \ i > s_k, \\
&& \\
{a_i}^{\pm 1} & \longleftrightarrow &  \lambda^{-1}_{n-1,1}  \lambda_{n,n+kq-1}  \lambda_{n+kq-1,1} \ {a_i}^{\pm 1} \\
 & & \lambda^{-1}_{n-1,1} \lambda^{-1}_{n+kq-1,n}  \lambda^{-1}_{n,n+kq-1}  \lambda_{n-1,1}, \mbox{\ for} \ i < s_1,\\
\end{array}
$$

$$
\begin{array}{ccl}
a_{s_{j}}  & \longleftrightarrow &  \lambda^{-1}_{n-1,1}  \lambda_{n,n+kq-1}  \lambda_{n+kq-1,n+(j-1)q}  \lambda^{-1}_{n,n+(j-1)q-1}  \lambda_{n-1,1} \ a_{s_{j}}\\
& &  \lambda^{-1}_{n-1,1} \lambda_{n,n+jq-1} \lambda^{-1}_{n+kq-1,n+jq} \lambda^{-1}_{n,n+kq-1} \lambda_{n-1,1} \\
&& \\
&  & and \\
&& \\
a^{-1}_{s_{j}} & \longleftrightarrow & \lambda^{-1}_{n-1,1} \lambda_{n,n+kq-1} \lambda_{n+kq-1,n+jq} \lambda^{-1}_{n+(j-1)q,n+jq-1} \lambda^{-1}_{n,n+(j-1)q-1} \lambda_{n-1,1} \ a^{-1}_{s_{j}}\\
& & \lambda^{-1}_{n-1,1} \lambda_{n,n+(j-1)q-1} \lambda^{-1}_{n+jq-1,n+(j-1)q} \lambda^{-1}_{n+kq-1,n+jq}
\lambda^{-1}_{n+kq-1,n} \lambda_{n-1,1}, \\
&&  \mbox{ \ for} \ s_j \in \{s_1, \ldots, s_k \},\\
&& \\
a^{\pm 1}_{j} & \longleftrightarrow & \lambda^{-1}_{n-1,1} \lambda_{n,n+kq-1} \lambda_{n+kq-1,n+(r-1)q} \lambda^{-1}_{n,n+(r-1)q-1} \lambda_{n-1,1} a^{\pm 1}_{j} \\
&& \lambda^{-1}_{n-1,1} \lambda_{n,n+(r-1)q-1} \lambda^{-1}_{n+kq-1,n+(r-1)q} \lambda^{-1}_{n,n+kq-1} \lambda_{n-1,1}, \mbox{ \ for} \ s_{r-1}<j<s_r.\\
\end{array}
$$

(ii) A {\it combed algebraic $\mathbb{Q}$-braid band move\/} is a move between algebraic mixed braids and is defined to be a parted $\mathbb{Q}$-braid band move that has been combed through $B$. Moreover, it has the following algebraic expression:
$$
\beta \sim d \  c_{\pm} \ \beta^{\prime} \ comb_B(c_1, \ldots , c_k),
$$

\noindent where $comb_B(c_1, \ldots , c_k)$ is the combing of the parted $q$-strand cables $c_1, \ldots , c_k$ through the surgery braid $B$ (see Figure~\ref{cables}).
\end{defn}

\begin{figure}
\begin{center}
\includegraphics[width=6.1in]{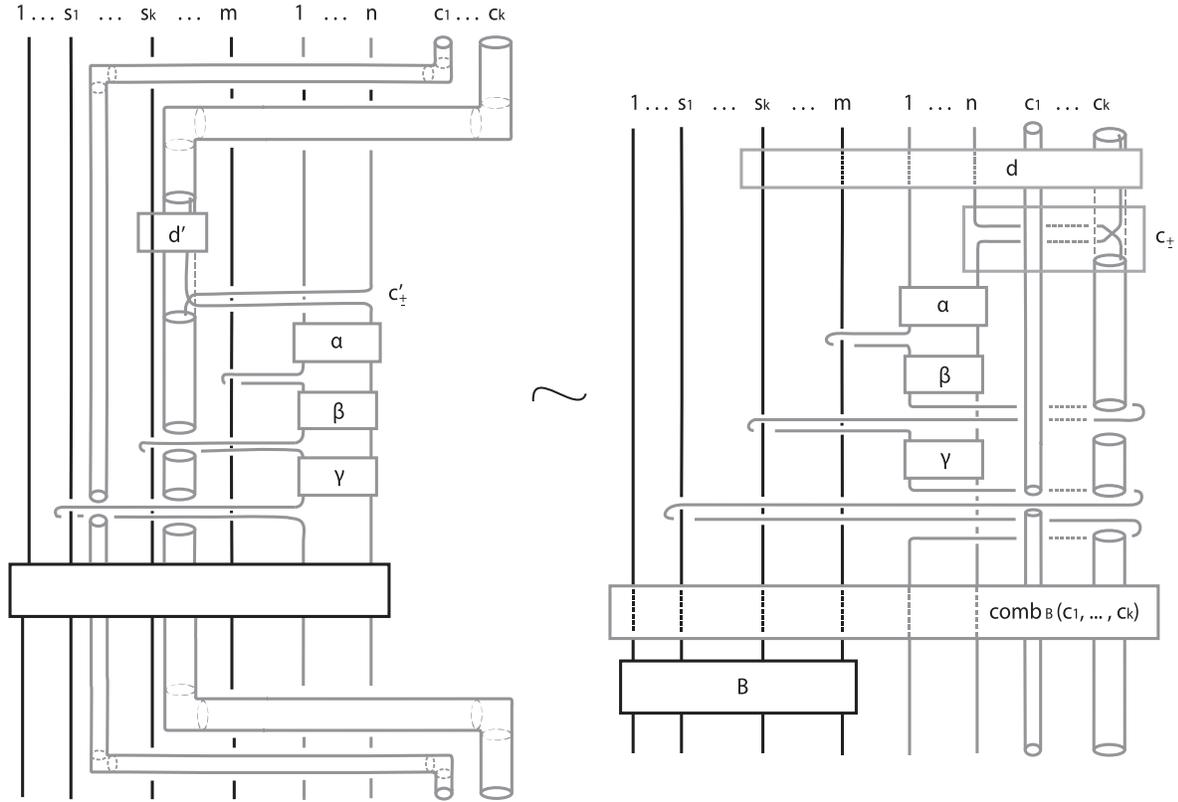}
\end{center}
\caption{ A combed algebraic $\mathbb{Q}$-braid band move. }
\label{cables}
\end{figure}

We are, finally, in the position to state the following main result of the paper.

\begin{thm}[Algebraic Markov Theorem for  $M=\chi_{_{\mathbb{Q}}}(S^3, \widehat{B})$]  \label{algmarkov}
Let $s_1, \ldots , s_k$ be the numbers of the strands of a surgery component $s$ and let $c_1, \ldots , c_k$ be the corresponding $q$-strand cables arising from a $\mathbb{Q}$-braid band move performed on $s$. Then, two oriented links in $M=\chi_{_{\mathbb{Q}}}(S^3, \widehat{B})$ are isotopic if and only if any two corresponding algebraic mixed braid representatives in $B_{m,\infty}$ differ by a finite sequence of the following moves: \\
(i) \ Algebraic $M$-moves: \ $\beta_1 \beta_2\sim \beta_1 \sigma_n^{\pm1} \beta_2$, for $\beta_1, \beta_2 \in B_{m,n}$, \\
(ii) \ Algebraic $M$-conjugation: \ $\beta \sim \sigma_j^{\mp1} \beta \sigma_j^{\pm1}$, for $\beta, \sigma_j \in B_{m,n}$, \\
(iii) \ Combed loop conjugation: $\beta \sim \alpha_i^{\mp1} \beta \ \rho_i^{\pm1}$, for $\beta \in B_{m,n}$, where $\rho_i$
is the combing of the loop $\alpha_i$ through $B$, \\
(iv) \ Combed algebraic braid band moves: $\beta \ \sim \ d \ c_{\pm} \ \beta^{\prime} \ comb_{B}(c_1, \ldots, c_k)$, where the algebraic expressions of $d$ and $c_{\pm}$ are as in Eqs.~(3) and (4) respectively, $\beta^{\prime}$ is $\beta$ with the substitutions of the loop generators as in Definition~\ref{algbm} and $comb_{B}(c_1, \ldots, c_k)$ is the combing of the resulting $q$-strand cables $c_1, \ldots, c_k$ through the fixed subbraid $B$,

\smallbreak

\noindent or equivalently, by a finite sequence of the following moves: \\
$\bullet$ algebraic $L$-moves (see algebraic expressions in Eqs~2), \\
$\bullet$ combed loop conjugation, \\
$\bullet$ combed algebraic braid band moves (Definition~\ref{algbm}).
\end{thm}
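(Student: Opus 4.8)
The plan is to start from the parted version of the braid equivalence, Theorem~\ref{qparted}, and push everything through the combing operation, mimicking the passage from Theorem~\ref{partedcco} to Theorem~\ref{algcco} in \cite{LR2}. By Theorem~\ref{qparted}, two links in $M=\chi_{_{\mathbb{Q}}}(S^3,\widehat B)$ are isotopic if and only if corresponding parted mixed braids in $C_{m,\infty}$ differ by a finite sequence of parted $L$-moves, loop conjugations and parted $\mathbb{Q}$-braid band moves. Combing a parted mixed braid splits it as the product of an \emph{algebraic part} lying in some $B_{m,n}$ and a \emph{coset part} depending only on the fixed surgery braid $B$; as recalled above, the crossings $\Sigma_k$ of $B$ commute with all braiding generators $\sigma_j$, so only the loop generators $a_i$ are affected, with the rewriting governed by the Combing Lemma~\ref{combinglem}. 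Since two combed representatives of the same parted braid have the same coset part, it suffices to track what each of the three parted moves becomes after combing, and to observe that the moves commute with combing in the sense that doing a parted move and then combing agrees with combing and then doing the combed move.

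First I would dispose of the parted $L$-moves and loop conjugations, which is essentially a transcription of \cite{LR2}. Algebraic $L$-moves commute with combing (cf.\ Lemma~4 \cite{LR2}), and an algebraic $L$-move decomposes into the algebraic $M$-moves (i) and the algebraic $M$-conjugations (ii) by the classical argument of \cite{LR1}; this already yields the equivalence of the two lists of moves at the end of the statement. A loop conjugation $\beta\sim a_i^{\pm1}\beta a_i^{\mp1}$ does not respect combing, so combing the lower copy $a_i^{\mp1}$ through $B$ turns it into the combed loop conjugation $\beta\sim a_i^{\mp1}\beta\rho_i^{\pm1}$ of move (iii), where $\rho_i$ is read off from Lemma~\ref{combinglem}. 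None of this is new relative to \cite{LR2}.

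The real work is the parted $\mathbb{Q}$-braid band move, and I would proceed in the order already set up by Lemmas~\ref{partcable}, \ref{cableloop} and \ref{cablecomb}. Treat the $k$ sets of $q$ new strands as $q$-strand cables $c_1,\dots,c_k$ (legitimate by Lemma~\ref{partcable}, since parting and cabling commute); part the $(p,q)$-torus braid $d'$ and the twist $c'_\pm$ locally using mixed braid isotopy together with Lemma~\ref{cableloop}, obtaining the closed forms $d$ and $c_\pm$ of Eqs.~(3) and (4); and part the loop generators $a_i$ with respect to the $kq$ cable strands inserted on their right, which produces the substitution table of Definition~\ref{algbm}(i) --- the several regimes there ($i>s_k$, $i<s_1$, $j=s_j$ a surgery strand, $s_{r-1}<j<s_r$) just record whether a given moving or surgery strand passes to the left of all, none or part of the cable package, and for $a_{s_j}$ versus $a_{s_j}^{-1}$ one must keep track of the orientation of the twist. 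This gives the \emph{algebraic} $\mathbb{Q}$-braid band move $\beta\sim d\,c_\pm\,\beta'$. Finally, combing the parted cables $c_1,\dots,c_k$ through $B$ and invoking Lemma~\ref{cablecomb} converts it into the combed algebraic $\mathbb{Q}$-braid band move (iv), with the extra factor $comb_B(c_1,\dots,c_k)$ of Definition~\ref{algbm}(ii). Assembling the three translations with Theorem~\ref{qparted} proves the theorem.

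I expect the main obstacle to be exactly the verification hidden in Lemma~\ref{cablecomb}, that combing a $q$-strand cable through $B$ may be performed strand by strand: the two resulting words are not literally equal --- already for a two-strand cable they differ by a defining relation of $B_{m,n}$ (see (\ref{B})) --- so the identity has to be forced out using the braid relations, and the general case needs the induction on cable width sketched in Figure~\ref{combcableproof2}. Keeping the bookkeeping of the many $\lambda_{k,r}$ factors consistent between the torus-braid part $d$, the loop substitutions and the combing term $comb_B(c_1,\dots,c_k)$ is where the proof is genuinely delicate; everything else is a faithful adaptation of the integral-surgery argument of \cite{LR2}.
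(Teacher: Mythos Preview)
Your proposal is correct and follows essentially the same route as the paper's own proof: start from Theorem~\ref{qparted}, invoke the passage from Theorem~\ref{partedcco} to Theorem~\ref{algcco} in \cite{LR2} verbatim for the parted $L$-moves and loop conjugations, and handle the parted $\mathbb{Q}$-braid band move by cabling (Lemma~\ref{partcable}), locally parting $d'$, $c'_\pm$ and the loop generators to obtain the algebraic form of Definition~\ref{algbm}, and finally combing the cables through $B$ using Lemma~\ref{cablecomb}. The paper's proof is terser --- it packages the first two items into a single reference to \cite{LR2} and organizes the $\mathbb{Q}$-braid band move analysis around a commutative diagram and Lemma~8 of \cite{LR2} --- but the logical content and the order of the key lemmas are the same as yours.
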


\begin{proof}
The arguments for passing from parted braid equivalence (Theorem~\ref{qparted}) to algebraic braid equivalence are the same as in those in the proof of the transition from Theorem~\ref{partedcco} to Theorem~\ref{algcco} in the case of integral surgery. The only part we need to analyze in detail is the algebraization of a parted $\mathbb{Q}$-braid band move. Namely, we will show that the following diagram commutes.
\begin{center}
$$
\begin{CD}
C_{m,n} \ni B \bigcup \beta  @>{\text{Parted Q-b.b.m.}}>> B\bigcup \beta^{\prime} \in C_{m,n+kq} \\
@|  @| \\
comb_B{\beta} @. comb_B{\beta^{\prime}} \\
@VVV  @VVV \\
B_{m,n} \ni alg_B(\beta) @>{\text{\text{Algebraic Q-b.b.m.}}}>> alg_B(\beta^{\prime}) \in B_{m+n+kq}
\end{CD}
$$
\end{center}
In words, we start with a parted mixed braid $B\bigcup \beta \in C_{m,n}$ and we perform on it a parted $\mathbb{Q}$-braid band move (Definition~\ref{partqbbm}) obtaining a parted mixed braid $B\bigcup \beta^{\prime} \in C_{m,n+kq}$, where $k$ is the number of strands forming the surgery component. We then comb both parted mixed braids obtaining $comb_B(\beta)$ and $comb_B(\beta^{\prime})$ respectively. We will show that the corresponding algebraic parts, $alg_B(\beta) \in B_{m,n}$ and $alg_B(\beta^{\prime}) \in B_{m,n+kq}$ differ by the algebraic braid equivalence given in the statement of the theorem.
We apply Lemma~8 in \cite{LR2}, where the $q$ strands of a braid band move are placed in the cable and the cable is treated as one strand. More precisely, we note that the parted $\mathbb{Q}$-braid band move takes place at the top of the braid, so it forms an algebraic $\mathbb{Q}$-braid band move. We now comb away $\beta$ to the top of $B$ and on the other side we comb away $\beta^{\prime}$. Since the $q$-strands cable of the parted $\mathbb{Q}$-braid band move lie very close to the surgery strands, this ensures that the loops $\alpha_j^{\pm1}$ around any strand of the $k$ strands of the specific surgery components get  combed in the same way before and after the $\mathbb{Q}$-braid band move. So, having combed away $\beta$ we are left at the bottom with the identity moving braid on the one hand, and with the combing of all cables of the braid band move on the other hand, which is precisely what we denote $comb_B()$. Finally, by Lemma~\ref{cablecomb}, combing and cable commute. Thus, the Theorem is proved.
\end{proof}

\section{Examples}
In this section we give the braid equivalences for knots in specific $3$-manifolds that play a very important role to $3$-dimensional topology
such as the lens spaces $L(p,q)$, homology spheres and Seifert manifolds.

\subsection{Lens spaces $L(p,q)$}

It is known that the lens spaces $L(p,q)$ can be obtained by surgery on the unknot with surgery coefficient $p/q$. So, the fixed braid
$\widehat{B}$ that represents $L(p,q)$ is the identity braid of one single strand and thus, no combing is needed. We have the following:
\smallbreak
\textit{Two oriented links in $L(p,q)$ are isotopic if and only if any two
corresponding algebraic mixed braids in $B_{1, \infty}$ differ by a finite sequence of the following
moves (compare with \cite{LR2}, Section~4) :}

\smallbreak

{\it (1)} \ {\it Algebraic $M$-moves: \ $\beta \sim \beta {\sigma^{\pm 1}_n},  \ \ \ \
\beta \in B_{1,n}$}

\vspace{.03in}

{\it (2)} \ {\it Algebraic $M$-conjugation: \
$\beta \sim {\sigma^{\mp 1}_i} \beta {\sigma^{\pm 1}_i}, \ \ \ \ \beta, \sigma_i \in B_{1,n}$}

\vspace{.03in}

{\it (3)} \ {\it Loop conjugation: \
$\beta \sim {t}^{\mp 1}\ \beta \ {t}^{\pm 1}, \ \ \ \ \beta \in B_{1,n}$ }

\vspace{.03in}

{\it (4)} \ {\it Algebraic braid band moves: } \ For $\beta \in B_{1,n}$ we have:
\[
\beta \ \sim \ {d}  \, {c_{\pm}} \, \beta^{\prime},
\]
\noindent  where:
$$
\begin{array}{ccl}
d &= & [\lambda_{n+q-1,1}\ t\ \lambda^{-1}_{1,n+q-1}]^p,\\
&&\\
c_{\pm} & = & \lambda_{n,n+q-1}\ \sigma_{n+q-1}^{\pm1}\ \lambda^{-1}_{n,n+q-1},\\
\end{array}
$$

\noindent and where $\beta^{\prime} \in B_{1,n+q}$ is the word $\beta$ with the substitutions:
$$
\begin{array}{ccl}
t & \longleftrightarrow & ({\lambda^{-1}_{n-1,1}}\ {\lambda_{n,n+q-1}}\ {\lambda_{n+q-1,1}})\ t,\\
&&\\
{t}^{-1} & \longleftrightarrow &  t^{-1}\ ({\lambda^{-1}_{n+q-1,1}}\  {\lambda^{-1}_{n,n+q-1}}\ {\lambda_{n-1,1}}).\\
\end{array}
$$

\begin{figure}
\begin{center}
\includegraphics[width=6in]{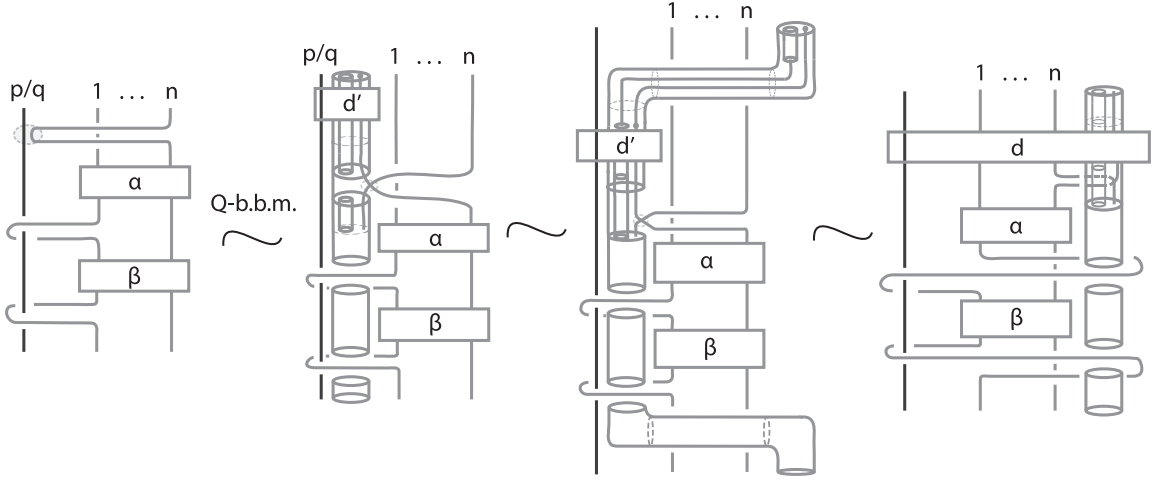}
\end{center}
\caption{ A $\mathbb{Q}$-braid band move in $L(p,q)$ and its algebraic expression. }
\label{figure20}
\end{figure}

In Figure~\ref{lpqeg} the case where $p=2$ and $q=3$ is illustrated.

\begin{figure}
\begin{center}
\includegraphics[width=3.3in]{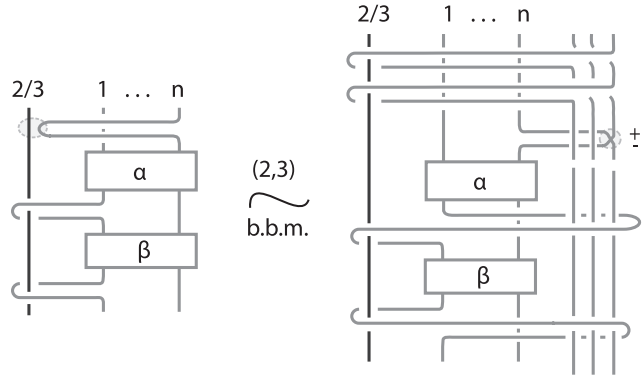}
\end{center}
\caption{ An algebraic $\mathbb{Q}$-braid band move in $L(2,3)$. }
\label{lpqeg}
\end{figure}

\subsection{Homology spheres}

It is known that a Dehn surgery on a knot yields a homology sphere exactly when the surgery coefficient is the reciprocal of an integer (see [Ro] p.262).
For example, surgery on the right-handed trefoil, with surgery coefficient $-1$ yields the Poincare Manifold also known as ``dodecahedral space" (for the algebraic braid equivalence in this case see \cite{LR2}, Section~4). In this subsection we give the algebraic braid equivalence for knots in a homology sphere $M$ obtained from $S^3$ by surgery on the trefoil knot with rational surgery coefficient $1/q$, where $q \in \mathbb{Z}$.

\smallbreak

\textit{Two oriented links in $M$ are isotopic if and only if any two corresponding algebraic mixed braids in $B_{2, \infty}$ differ by a finite sequence of the following moves:}

\smallbreak

{\it (1)} \ {\it Algebraic $M$-moves: \ $\beta \sim \beta {\sigma^{\pm 1}_n},  \ \ \ \
\beta \in B_{2,n}$}

\vspace{.03in}

{\it (2)} \ {\it Algebraic $M$-conjugation: \
$\beta \sim {\sigma^{\mp 1}_i} \beta {\sigma^{\pm 1}_i}, \ \ \ \ \beta, \sigma_i \in B_{2,n}$}

\vspace{.03in}

{\it (3)} \ {\it Combed Loop conjugation: \
$\beta \sim {a_i}^{\mp 1} \beta {\rho_i}^{\pm 1}, \ \ \ \ \beta \in B_{2,n}$, \textit{where} $\rho_i$ \textit{is the combing of the loop} $a_i$ \textit{through} $\widehat{B}$,}

\vspace{.03in}

{\it (4)} \ {\it Combed algebraic braid band moves: } \ $\beta \sim d \ c_{\pm} \ \beta^{\prime} \ comb_B(c_1, c_2)$, \textit{where:} $\beta \in B_{2,n}$,

$$
\begin{array}{ccl}
d & = & (\lambda_{n+2q-1,n+q+1}\ \lambda_{n+1,n+q}^{-1}\ \lambda_{n,1})\ a_2\ (\lambda_{n,1}^{-1}\ \lambda_{n+1,n+q}),\\
&&\\
c_{\pm} & = & \lambda_{n,n+2q-1}\ \sigma_{n+2q-1}^{\pm1}\ \lambda_{n,n+2q-1}^{-1},\\
\end{array}
$$
\noindent $\beta^{\prime}$ \textit{is the word} $\beta$ \textit{with the substitutions:}
$$
\begin{array}{ccl}
a_1 & \longleftrightarrow & (\lambda_{n-1,1}^{-1} \ \lambda_{n,n+2q-1}\  \lambda_{n+2q-1,n+q}\  \lambda^{-1}_{n,n+q-1}\  \lambda_{n-1,1}) a_1,\\
& & \\
a^{-1}_1 & \longleftrightarrow &  a^{-1}_1 \ (\lambda_{n-1,1}^{-1}\  \lambda_{n,n+q-1}\  \lambda^{-1}_{n+2q-1,n+q}\  \lambda^{-1}_{n,n+2q-1}\ \lambda_{n-1,1}),\\
& & \\
a_2 & \longleftrightarrow & (\lambda_{n-1,1}^{-1}\ \lambda_{n,n+2q-1}\ \lambda_{n+2q-1,1})\ a_2\\
& & (\lambda_{n-1,1}^{-1}\ \lambda_{n,n+q-1}\ \lambda^{-1}_{n+2q-1,n+q}\ \lambda^{-1}_{n,n+2q-1}\ \lambda_{n-1,1}),\\
& & \\
a^{-1}_2 & \longleftrightarrow & (\lambda_{n-1,1}^{-1}\ \lambda_{n,n+2q-1}\ \lambda_{n+2q-1,n+q}\ \lambda_{n,n+q}^{-1}\ \lambda_{n-1,1})\ a^{-1}_2 \\
& & (\lambda^{-1}_{n+2q-1,1}\ \lambda^{-1}_{n,n+2q-1}\ \lambda_{n-1,1}),\\
\end{array}
$$
\noindent \textit{and} $comb_B(c_1, c_2)$ \textit{is the combing of the $q$-strand cables ($c_1$ and $c_2$) through the fixed braid:}
$$
\begin{array}{ccl}
comb_B(c_1, c_2) & = & \prod_{i=0}^{q-1}{\lambda_{n+i,1} \ a_2\ \lambda^{-1}_{n+i,1}} \ \prod_{i=0}^{q-1}{\lambda_{n+2q-1-i,1}\ a^{-1}_2\ \lambda^{-1}_{n+2q-1-i,1}} \\
&&\\
&& \prod_{i=0}^{q-1}{\lambda_{n+q+i,1}\ a_1\ \lambda^{-1}_{n+q+i,1}} \ \lambda_{n+q,1} \ a_2\ \lambda^{-1}_{n,1}\ \lambda_{n+1,n+q}\\
&&\\
&& \prod_{i=1}^{q-1}{\lambda_{n+q+i,1}\ a_2\ \lambda^{-1}_{n,1}\ \lambda_{n+1,n+q}\ \lambda^{-1}_{n+q+i,n+q+1}}\\
&&\\
&& \prod_{i=0}^{q-1}{\lambda_{n+q-1-i,1}\ a_2\ \lambda^{-1}_{n+q-1-i,1}} \ \prod_{i=0}^{q-1}{\lambda_{n+i,1}\ a_1\ \lambda^{-1}_{n+i,1}}\\
&&\\
&& \prod_{i=0}^{q-1}{\lambda_{n+i,1}\ a_2\ \lambda^{-1}_{n+i,1}} \ \prod_{i=0}^{q-1}{\lambda_{n+i,1}\ a_1\ \lambda^{-1}_{n+i,1}}\\
&&\\
&& \prod_{i=0}^{q-1}{\lambda_{n+i,1}\ a_2\ \lambda^{-1}_{n+i,1}} \ \prod_{i=0}^{q-1}{\lambda_{n+q+i,n+1+i}}.\\
\end{array}
$$

\subsection{Seifert Manifolds}

It is known that a Seifert manifold $M((p_1,q_1), \ldots, (p_{m-1},q_{m-1}))$ has a rational surgery description as shown in Figure~\ref{seifsurg} (see \cite{Sa}, p.33)).

\begin{figure}
\begin{center}
\includegraphics[width=3.2in]{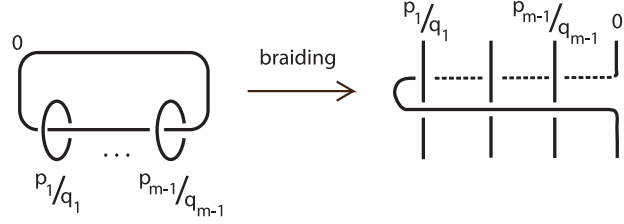}
\end{center}
\caption{ Surgery description of a Seifert manifold. }
\label{seifsurg}
\end{figure}

\textit{Two oriented links in a Seifert manifold $M((p_1,q_1), \ldots, (p_{m-1},q_{m-1}))$ are isotopic if and only if any two
corresponding algebraic mixed braids differ by a finite sequence of the following moves:}

\smallbreak

{\it (1)} \ {\it Algebraic $M$-moves: \ $\beta \sim \beta {\sigma^{\pm 1}_n},  \ \ \ \
\beta \in B_{m,n}$}

\vspace{.03in}

{\it (2)} \ {\it Algebraic $M$-conjugation: \
$\beta \sim {\sigma^{\mp 1}_i} \beta {\sigma^{\pm 1}_i}, \ \ \ \ \beta, \sigma_i \in B_{m,n}$}

\vspace{.03in}

{\it (3)} \ {\it Combed loop conjugation: \
$\beta \sim {a_j}^{\mp 1} \beta {a_j}^{\pm 1}, \ \ \ \ \beta \in B_{m,n}$. }

\vspace{.03in}

{\it (4)} \ {\it Combed algebraic braid band moves: } \ For $\beta \in B_{m,n}$ we distinguish the cases:

\smallbreak

$\bullet$ If a $\mathbb{Q}$-braid band move is performed on the $j^{th}$ strand of the fixed braid with rational coefficient $p/q$ (see Figure~\ref{seifcomb}) then: $\beta \sim d \ c_{\pm} \  \beta^{\prime} \ comb_B(c_j)$, where $comb_B(c_j)$ is the combing of the $c_j$ cable through $B$,

$$d\ =\ [\lambda_{n+q-1,1}\ \alpha_i\ \lambda_{n-1,1}^{-1}]^p \ \ {\rm and} \ \ c_{\pm}\ =\ \lambda_{n,n+q-1}\ \sigma_{n+q-1}^{-1}\ \lambda_{n,n+q-1}^{-1},$$

\smallbreak

and where $\beta^{\prime}$ is $\beta$ with the substitutions:
$$
\begin{array}{ccll}
a^{\pm 1}_i & \longleftrightarrow &  a^{\pm 1}_i,& \quad i\ > \ j,\\
&&& \\
a_i^{\pm1} & \longleftrightarrow  & \lambda_{n-1,1}^{-1} \ \lambda_{n,n+q-1}\ \lambda_{n+q-1,1}\ a_i^{\pm1}& \\
&& \lambda_{n+q-1,1}^{-1}\ \lambda_{n,n+q-1}^{-1}\ \lambda_{n-1,1},&  \quad i\ < \ j,\\
&&& \\
a_j &\longleftrightarrow & \lambda_{n-1,1}^{-1} \ \lambda_{n,n+q-1} \ \lambda_{n+q-1,1} \ a_j & \\
&&& \\
a^{-1}_j & \longleftrightarrow & a^{-1}_j \lambda_{n+q-1,1}^{-1} \ \lambda^{-1}_{n,n+q-1} \ \lambda_{n-1,1}.& \\
\end{array}
$$
\noindent $\bullet$ If a $\mathbb{Q}$-braid band move is performed on the last strand of the fixed braid with surgery coefficient $0$, then:

$$\beta \sim \sigma_{n}^{\pm 1}\ \beta^{\prime},$$
\
\noindent where $\beta^{\prime}$ is $\beta$ with the substitutions:
$$
\begin{array}{ccl}
a^{\pm 1}_{j} & \longleftrightarrow & \lambda_{n-1,1}^{-1} \ \sigma^{2}_n\ \lambda_{n-1,1} a^{\pm 1}_{j}\ \lambda_{n,1}^{-1}\ \sigma_n^{-1} \ \lambda_{n-1,1},\ \textrm{for}\ j=1, \ldots, m-1,\\
&& \\
a_{m} & \longleftrightarrow & \lambda_{n-1,1}^{-1}\ \sigma^{2}_n\ \lambda_{n-1,1}\ a_{m},\\
&& \\
a^{-1}_{m} & \longleftrightarrow & a^{-1}_{m}\ \lambda_{n-1,1}^{-1}\ \sigma^{-2}_n\ \lambda_{n-1,1}.\\
\end{array}
$$

\begin{figure}
\begin{center}
\includegraphics[width=6in]{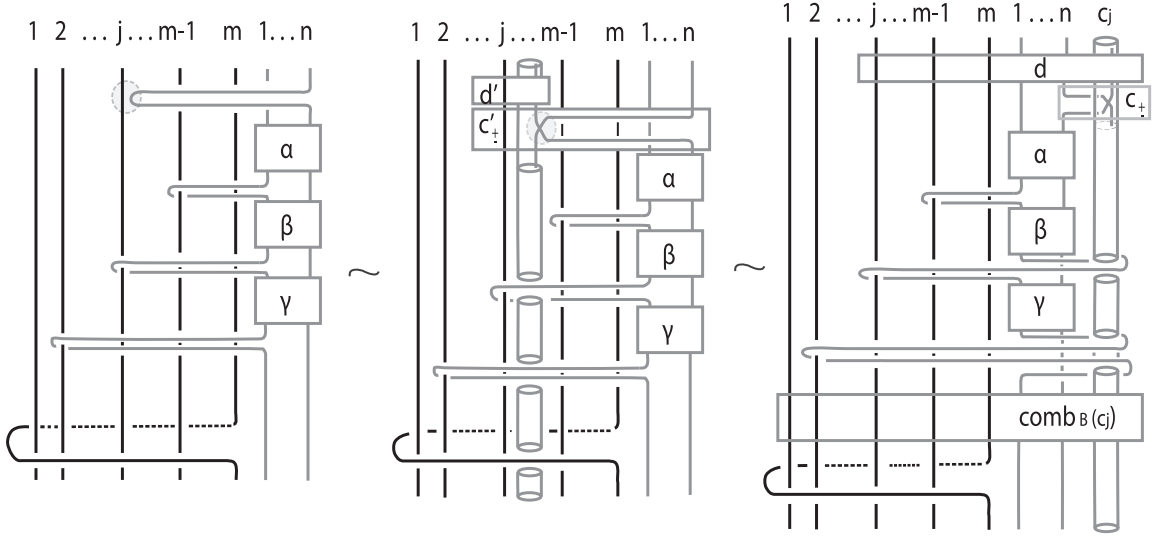}
\end{center}
\caption{ A $\mathbb{Q}$-braid band move in a Seifert manifold and its algebraic expression. }
\label{seifcomb}
\end{figure}

\subsection{Rational surgery along a torus knot}

It is well-known that a manifold $M$ obtained by rational surgery from $S^3$ along an $(m,r)$-torus knot with rational coefficient $p/q$ is either the lens space $L(|q|,pr^2)$, or the connected sum of two lens spaces $L(m,r)\sharp L(r,m)$, or a Seifert manifold (for more details the reader is referred to \cite{LM}).

\textit{Two oriented links in $M$ are isotopic if and only if any two corresponding algebraic mixed braids differ by a finite sequence of the following moves:
}
\smallbreak

{\it (1)} \ {\it Algebraic $M$-moves: \ $\beta \sim \beta {\sigma^{\pm 1}_n},  \ \ \ \
\alpha \in B_{m,n}$}

\vspace{.03in}

{\it (2)} \ {\it Algebraic $M$-conjugation: \
$\beta \sim {\sigma^{\mp 1}_i} \beta {\sigma^{\pm 1}_i}, \ \ \ \ \beta, \sigma_i \in B_{m,n}$}

\vspace{.03in}

{\it (3)} \ {\it Combed loop conjugation: \
$\beta \sim {a_i}^{\mp 1} \beta {\rho_i}^{\pm 1}, \ \ \ \ \beta \in B_{m,n}$, \textit{where} $\rho_i$ \textit{is the combing of the loop} $a_i$ \textit{through} $\widehat{B}$,}
\vspace{.03in}

{\it (4)} \ {\it Combed algebraic braid band moves:} For $\beta \in B_{m,n}$ we have:
$$\beta \sim d \ c_{\pm} \ \beta^{\prime} \ comb_B(c_1, \ldots, c_m),$$
\noindent where
$$
\begin{array}{lll}
d & = & [\ \lambda_{n+mq-1,n+(m-1)q+1}\ \lambda_{n,n+(m-1)q}^{-1}\ \lambda_{n-1,1}\ \alpha_j\ \lambda_{n-1,1}^{-1}\ \lambda_{n,n+(m-1)q}\ ]^p,\\
c_{\pm} & = & \lambda_{n,n+mq-2}\ \sigma_{n+mq-1}^{\pm1}\ \lambda_{n,n+mq-2}^{-1},
\end{array}
$$
\noindent $comb_B(c_1, \ldots, c_m)$ \textit{is the combing through the fixed braid braid of the parted moving cables parallel to the surgery strands and $\beta^{\prime}$ is the word $\beta$ with the substitutions:}
$$
\begin{array}{ccl}
a_j & \longleftrightarrow & (\lambda_{n-1,1}^{-1} \ \lambda_{n,n+mq-1} \ \lambda_{n+mq-1,n+(j-1)q} \ \lambda^{-1}_{n,n+(j-1)q-1} \ \lambda_{n-1,1}) \ a_j\\
&& (\lambda^{-1}_{n-1,1} \ \lambda_{n,n+jq-1} \ \lambda^{-1}_{n+mq-1,n+jq} \ \lambda^{-1}_{n,n+mq-1} \ \lambda_{n-1,1}),\\
&& \\
a^{-1}_j & \longleftrightarrow & (\lambda_{n-1,1}^{-1} \ \lambda_{n,n+mq-1} \ \lambda_{n+mq-1,n+jq} \ \lambda^{-1}_{n,n+jq-1} \ \lambda_{n-1,1}) \ a^{-1}_j\\
&& (\lambda^{-1}_{n-1,1} \ \lambda_{n,n+(j-1)q-1} \ \lambda^{-1}_{n+mq-1,n+(j-1)q} \ \lambda^{-1}_{n,n+mq-1} \ \lambda_{n-1,1}), \ for\ j \in \{ 1, \ldots , m \}.\\
\end{array}
$$

In Figures~\ref{homtref} and \ref{hompart} we illustrate an example where the $(m,r)$-torus knot is the $(2,3)$-torus knot, $p=2$ and $q=3$ (see Proposition~3.1 in \cite{LM} for details about the manifold obtained).

\begin{figure}
\begin{center}
\includegraphics[width=3.5in]{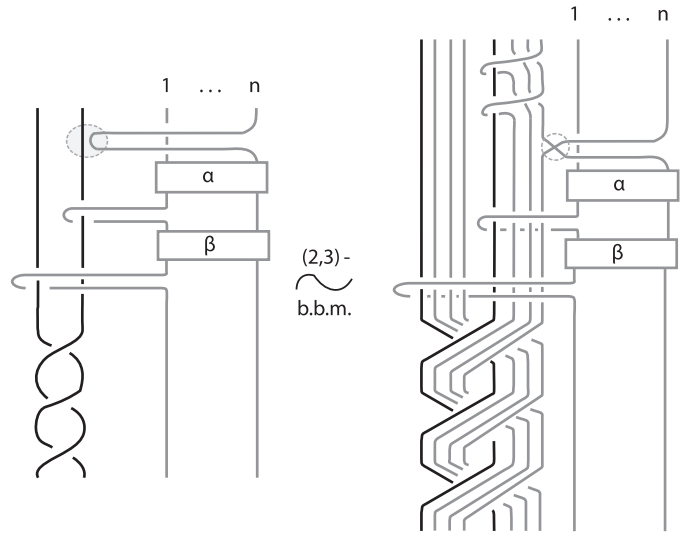}
\end{center}
\caption{ A geometric $(2,3)$-braid band move along a trefoil. }
\label{homtref}
\end{figure}

\begin{figure}
\begin{center}
\includegraphics[width=5.2in]{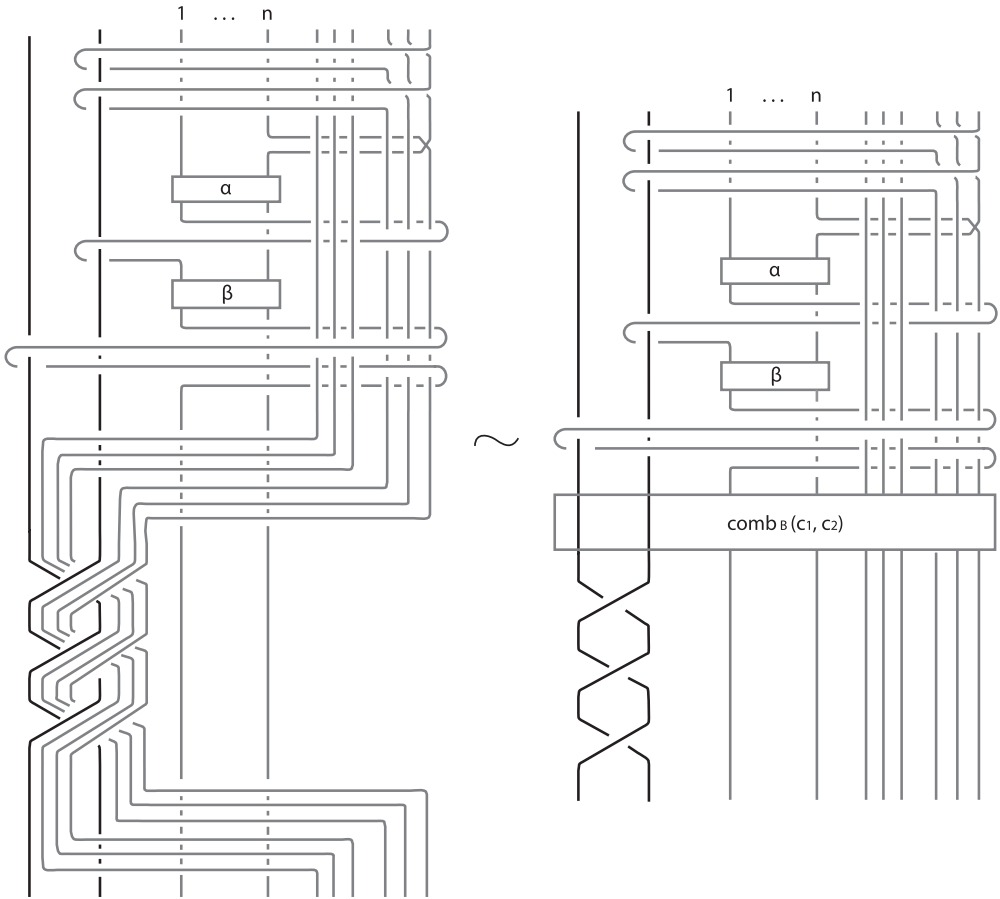}
\end{center}
\caption{ Turning the geometric $(2,3)$-braid band move into a combed algebraic $(2,3)$-braid band move. }
\label{hompart}
\end{figure}

\subsection{ Illustrations for an abstract generic example}

Let $M$ be the manifold obtained by rational surgery along a framed link $\widehat{B}$ in $S^3$. Let also $B \bigcup \beta$ be a parted mixed braid representing a link in $M$. In Figures~\ref{seifcomb1} to \ref{seifcomb6} we illustrate step-by-step the algebraization of a geometric $\mathbb{Q}$-braid band move. More precisely, in Figure~\ref{seifcomb1} a  geometric $\mathbb{Q}$-braid band move takes place on the last strand of a surgery component $(s_1,\ldots,s_k)$ of $B$.
 In Figure~\ref{seifcomb2} we part all cables $c_1,\ldots,c_k$ arising from the geometric $\mathbb{Q}$-braid band move, turning the initial  geometric $\mathbb{Q}$-braid band move  to a parted  $\mathbb{Q}$-braid band move. In Figure~\ref{seifcomb2} we also part locally the $(p,q)$-torus subbraid $d^{\prime}$.
 This leads to the algebraic expression $d$ of $d^{\prime}$, illustrated in Figure~\ref{seifcomb3}, where the local parting of the crossing subbraid $c^{\prime}_{\pm}$ is also initiated. In Figure~\ref{seifcomb4}  the algebraic expression  $c_{\pm}$ of $c^{\prime}_{\pm}$ is illustrated and the local parting of all loop generators is also initiated. This leads to the  algebraic expressions of the loop generators, in Figure~\ref{seifcomb5}, where also the preparation for combing of the cables  $c_1,\ldots,c_k$  through $B$ is illustrated. Note that the top part of Figure~\ref{seifcomb5} (above the dotted line) illustrates an algebraic $\mathbb{Q}$-braid band move. Finally, in Figure~\ref{seifcomb6} the combing of the cables $c_1,\ldots,c_k$  through $B$ is performed and the final result is a combed algebraic $\mathbb{Q}$-braid band move.
 
\begin{figure}
\begin{center}
\includegraphics[width=3.2in]{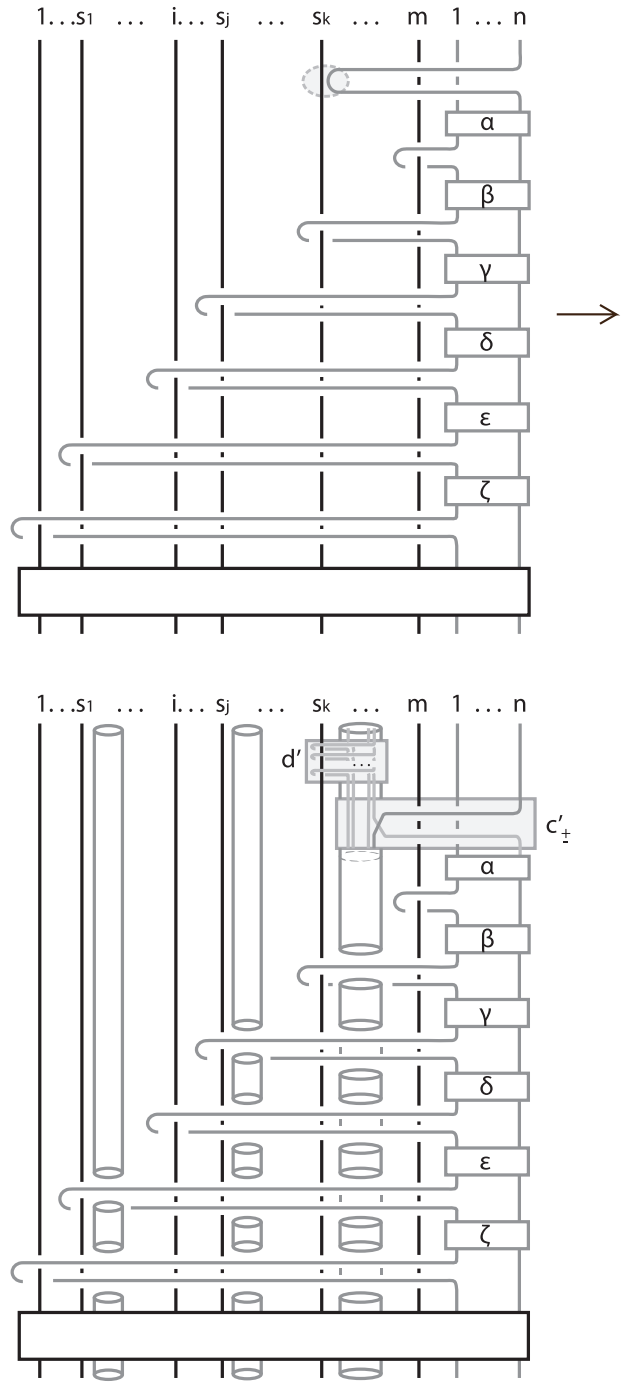}
\end{center}
\caption{ A geometric $\mathbb{Q}$-braid band move. }
\label{seifcomb1}
\end{figure}

\begin{figure}
\begin{center}
\includegraphics[width=4.1in]{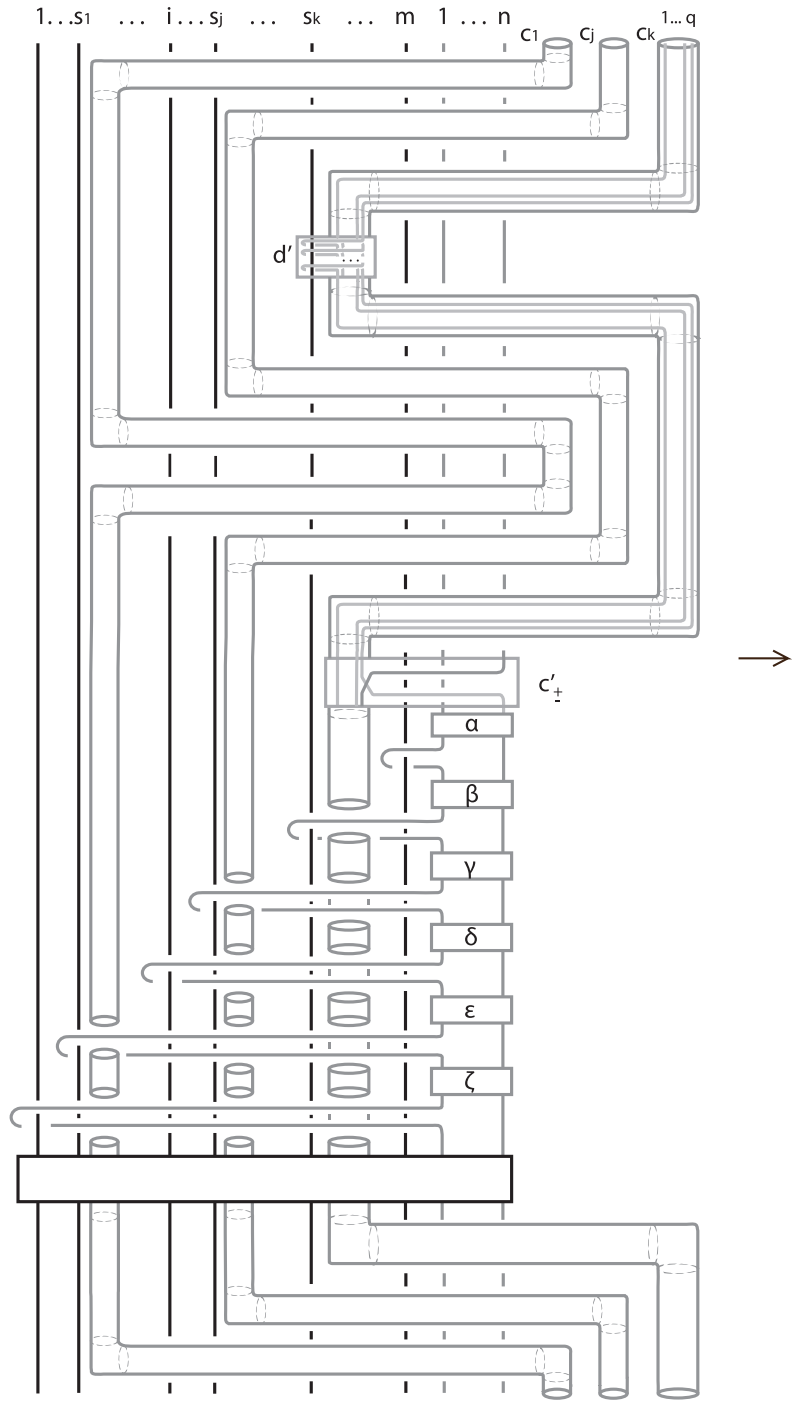}
\end{center}
\caption{ Parting locally $d^{\prime}$. }
\label{seifcomb2}
\end{figure}

\clearpage{}

\begin{figure}
\begin{center}
\includegraphics[width=3.7in]{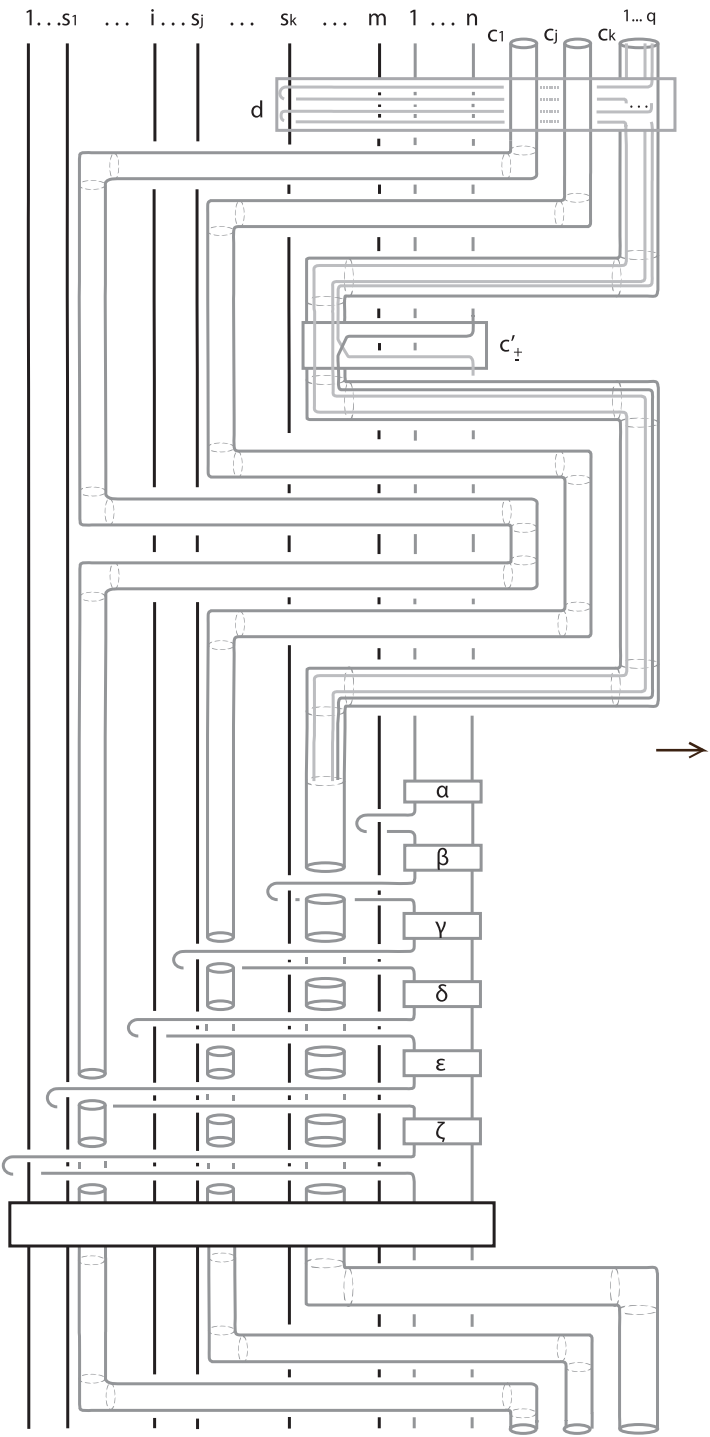}
\end{center}
\caption{ Algebraization of $d^{\prime}$ to $d$ and local parting of $c^{\prime}_{\pm}$. }
\label{seifcomb3}
\end{figure}

\begin{figure}
\begin{center}
\includegraphics[width=3.9in]{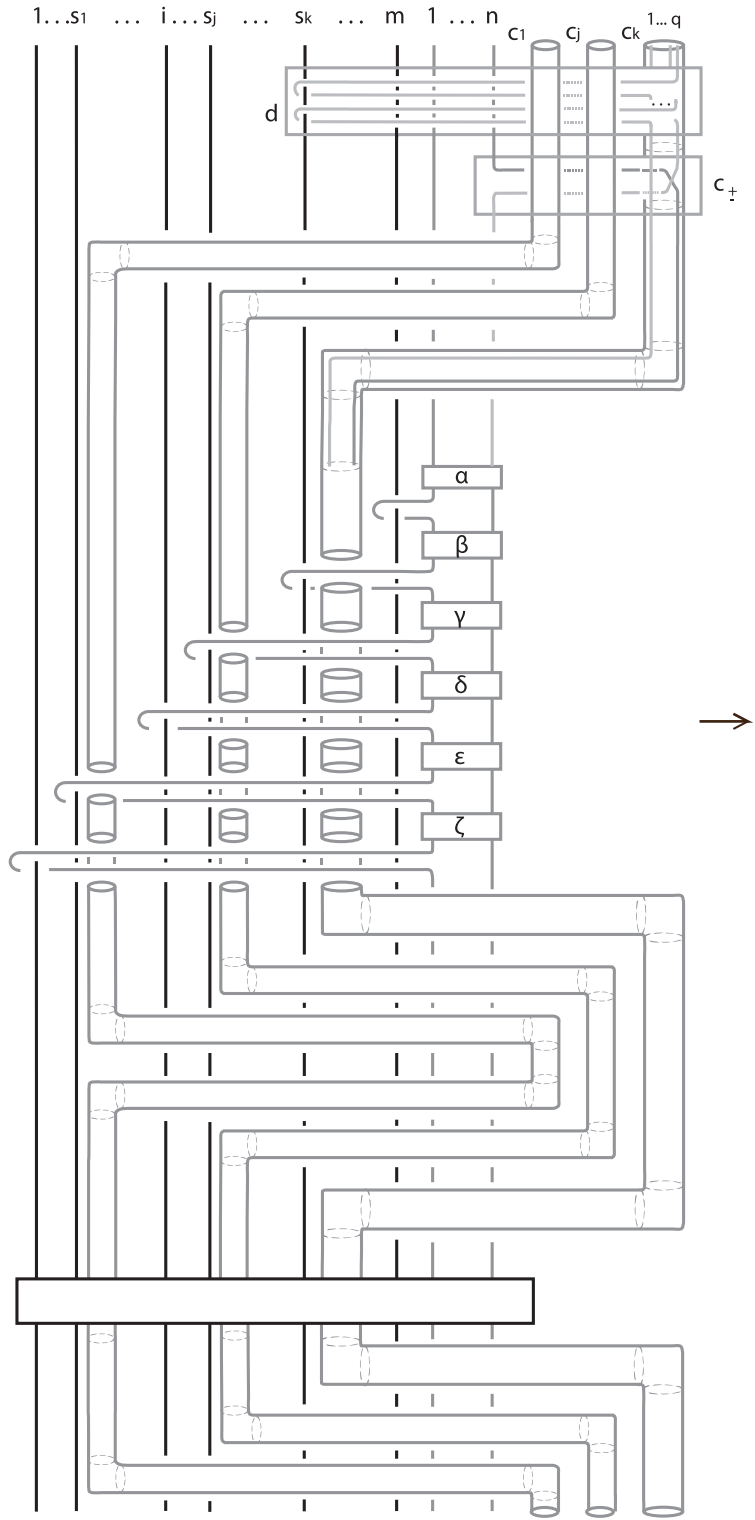}
\end{center}
\caption{ Algebraization of $c^{\prime}_{\pm}$ to $c$ and local parting of the loop generators $a_{i}$. }
\label{seifcomb4}
\end{figure}

\begin{figure}
\begin{center}
\includegraphics[width=4in]{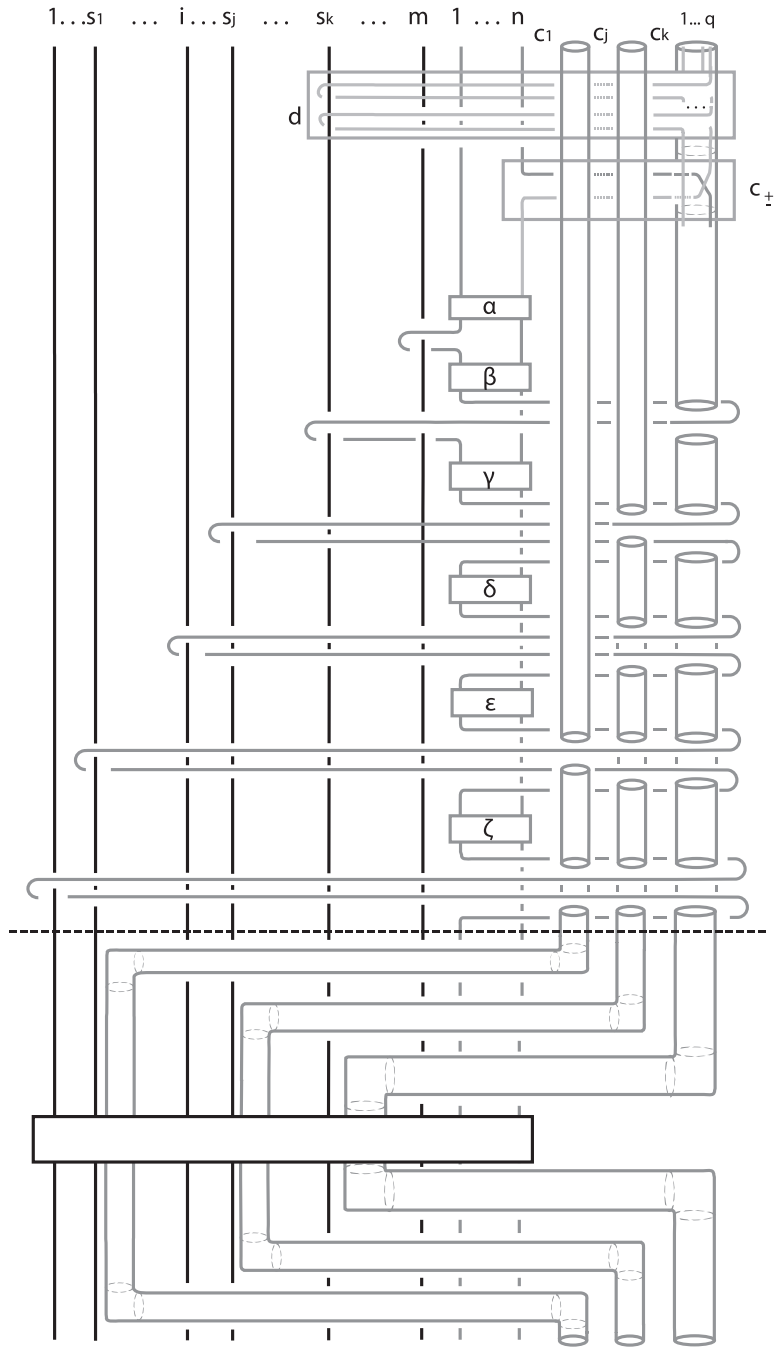}
\end{center}
\caption{ Algebraization of the loop generators and preparation for combing. }
\label{seifcomb5}
\end{figure}


\begin{figure}
\begin{center}
\includegraphics[width=3.3in]{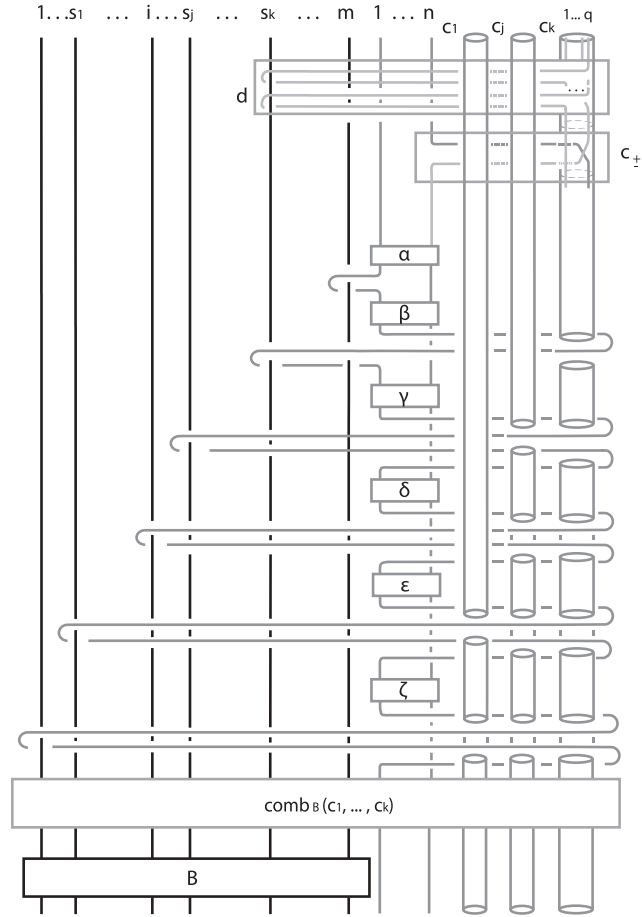}
\end{center}
\caption{ The combing of the cables through $B$. }
\label{seifcomb6}
\end{figure}

\end{document}